\pgfplotsset{compat=1.10}
\newsavebox{\measure@tikzpicture}
  \def\tikz@width{#1}%
\apptocmd{\thebibliography}{\fontsize{11}{15}\selectfont}{}{}%
\tikzset{
    state/.style={
           rectangle,
           rounded corners,
           draw=black, very thick,
           minimum height=2em,
           inner sep=2pt,
           text centered,
           },
}
\theoremstyle{plain}
\newtheorem{theorem}{Theorem}[section]
\newtheorem{lemma}[theorem]{Lemma}
\newtheorem{proposition}[theorem]{Proposition}
\newtheorem{remark}[theorem]{Remark}
\newtheorem{definition}[theorem]{Definition}
\theoremstyle{definition}
\theoremstyle{remark}
\numberwithin{equation}{section}
\newcommand{\B}{\mathcal B}
\newcommand{\E}{\mathcal{E}}
\newcommand{\F}{\mathcal F}
\renewcommand{\H}{\mathcal{H}}
\newcommand{\Sc}{\mathcal S}
\newcommand{\Z}{\mathbb{Z}}
\newcommand{\de}{\delta}
\newcommand{\e}{\varepsilon}
\renewcommand{\l}{\lambda}
\newcommand{\Div}{{\rm div}\,}
\newcommand{\weakstar}{\stackrel{*}{\rightharpoonup}}
\newcommand{\pa}{\partial}
\newcommand{\cl}{\mathrm{cl}\,}
\newcommand{\C}{\mathcal{C}}
\newcommand{\KK}{\mathcal{K}}
\newcommand{\bp}{\boldsymbol{\pi}_0}
\theoremstyle{plain}
\newtheorem*{theorem*}{Theorem}
\newtheorem*{corollary*}{Corollary}
\theoremstyle{definition}
\newtheorem*{notation*}{Notation}
\numberwithin{equation}{section}
\numberwithin{figure}{section}
\definecolor{myred}{rgb}{0.9,0,0}
\definecolor{vargreen}{rgb}{0.0, 0.5, 0.0}
\newcommand{\Om}{\Omega}
\newcommand{\mylabel}[2]{#2\def\@currentlabel{#2}\label{#1}}
\newcommand{\FF}{\mathcal F_{\Lambda,\eps}}
\newcommand{\EL}{E_{\Lambda,\eps}}
\renewcommand{\rho}{\varrho}
\renewcommand{\theta}{\vartheta}
\newcommand{\eps}{\varepsilon}
\newcommand{\boundellipse}[3]
{(#1) ellipse (#2 and #3)
}
\theoremstyle{plain} 
\DeclareMathOperator{\diver}{div}
\DeclareMathOperator*{\argmin}{arg\,min}
\pgfplotsset{/pgf/number format/use comma,compat=newest}
\renewcommand\epsilon{\varepsilon}
\newcommand{\R}{\mathbb{R}}
\newcommand{\N}{\mathbb{N}}
\newcommand{\tens}[1]{\mathsf{#1}}
\newcommand{\abs}[1]{\left\lvert#1\right\rvert}
\newcommand{\norm}[1]{\left\lVert#1\right\rVert}
\begin{document}
\title{\textsc{Classical solutions to the soap film \\
capillarity problem for plane boundaries}}

\author{\textsc{Giulia Bevilacqua}$^1$\thanks{\href{mailto:giulia.bevilacqua@dm.unipi.it}{\texttt{giulia.bevilacqua@dm.unipi.it}}}\,\,\,$-$\,\, \textsc{Salvatore Stuvard}$^2$\thanks{\href{mailto:salvatore.stuvard@unimi.it}{
\texttt{salvatore.stuvard@unimi.it}}} \,\,\,$-$\,\, \textsc{Bozhidar Velichkov}$^1$\thanks{\href{mailto:bozhidar.velichkov@unipi.it}{
\texttt{bozhidar.velichkov@unipi.it}}}\bigskip\\
\normalsize$^1$ Dipartimento di Matematica, Università di Pisa, Largo Bruno Pontecorvo 5, I–56127 Pisa, Italy\\
\normalsize$^2$ Dipartimento di Matematica, Università degli Studi di Milano, Via Saldini 50, I-20133 Milano, Italy\\
}

\date{}

\maketitle

\bigskip

\begin{abstract}
\noindent We study the {\it soap film capillarity problem}, in which soap films are modeled as sets of least perimeter among those having prescribed (small) volume and satisfying a topological spanning condition. When the given boundary is the closed tubular neighborhood in $\mathbb{R}^3$ of a smooth Jordan curve (or, more generally, the closed tubular neighborhood in $\mathbb{R}^d$ of a smooth embedding of $\mathbb{S}^{d-2}$ in a hyperplane), we prove existence and uniqueness of \emph{classical} minimizers, for which the collapsing phenomenon does not occur. We show that the boundary of the unique minimizer is the union of two symmetric smooth normal graphs over a portion of the plane; the graphs have positive constant mean curvature bounded linearly in terms of the volume parameter, and meet the boundary of the tubular neighbourhood orthogonally. Moreover, we prove uniform bounds on the sectional curvatures in order to show that the boundaries of solutions corresponding to varying volumes are ordered monotonically and produce a foliation of space by constant mean curvature hypersurfaces.
\end{abstract}

\bigskip
\bigskip

\textbf{Mathematics Subject Classification (2020)}: 49Q05, 49S05, 49J45

\textbf{Keywords}: Soap film capillarity problem, classical solutions, curvature estimates, foliations

\bigskip
\bigskip

\maketitle

\tableofcontents

\section{Introduction}

Finding the shape of a soap film spanning an assigned wire is a centuries-old mathematical problem which dates back to Plateau \cite{plateau1873experimental}. 
Numerous techniques and theories have been developed in the literature with the aim of addressing and studying it. We refer for instance to the contributions of Douglas-Rad\'o \cite{rado1930problem, douglas1931solution}, Reifenberg \cite{reifenberg1960solution}, Federer-Fleming \cite{federer1960normal}, and Almgren \cite{almgren1968existence}; see also the recent survey by David \cite{david2014should}.
 The starting points in the present paper are the Harrison-Pugh's formulation \cite{harrison2016existence} of the so-called \emph{homotopic Plateau's problem}, then revised by De Lellis, Ghiraldin, and Maggi in \cite{de2017direct}, and the {\it soap film capillarity problem} proposed by Maggi, Scardicchio, and the second-named author in \cite{maggi2019soap}. First, we briefly introduce the state of the art in \cref{sub:intro-plateau} and \cref{sub:intro-beautiful}, then in \cref{sub:intro-main-theorem} we state our main results (\cref{th:sogno_finale} and \cref{th:calotte_circonferenza}).

\subsection{Homotopic Plateau problem}\label{sub:intro-plateau}

In the setting of \cite{de2017direct}, the wire $W$ is a closed subset of $\R^d$, while the admissible region to host the solution is defined as the open set $\Omega:= \R^d\setminus W$. 
A family $\mathscr{C}_{W}$ of curves is introduced as follows
$$
\mathscr{C}_{W}:=\left\{ \gamma : {\mathbb{S}}^1 \to \Om : \gamma \hbox{ is a smooth embedding} \right\},
$$
and a {\it spanning class} is any subset $\C \subset \mathscr{C}_{W}$ which is closed by homotopy. Namely, $\mathcal C$ is a spanning class (for $W$) if it has the following property: if $\gamma_0 \in \mathcal{C}$ and $\gamma_1 \in \mathscr{C}_W$, and there exists a (continuous) homotopy connecting $\gamma_0$ to $\gamma_1$ in $\Omega$, then $\gamma_1 \in \mathcal C$. A relatively closed set $S \subset \Om$ is said to be \emph{$\mathcal C$-spanning $W$} if  
\begin{equation*}
S \cap \gamma(\mathbb{S}^1) \neq \emptyset \quad \text{for every}\quad \gamma \in \C\,.
\end{equation*}
The \emph{homotopic Plateau problem} associated to the wire $W$ and to the spanning class $\mathcal C$ is then the variational problem 
\begin{equation}\label{e:hp-intro}
\inf\Big\{\H^{d-1}(S)\ :\ S\in\Sc(W,\C)\Big\}\,,
\end{equation}
where 
\begin{equation*}
\Sc(W,\C):=\left\{ S\subset \Omega\, \colon \, S \hbox{ is relatively closed in } \Omega \hbox{ and } S \hbox{ is } \C-\hbox{spanning } W \right\}\,.
\end{equation*}
In \cite[Theorem 4]{de2017direct}, it is proved that if the infimum in \eqref{e:hp-intro} is finite then there exists a minimizer, and that any minimizer is an $(\mathbf M, 0, \infty)$-minimal set in $\Omega$ in the sense of Almgren; see \cite{Alm_Memoirs} and \cite{Taylor76}. 
We remark that this framework is very flexible from the point of view of mathematical modeling: 
see, for instance, \cite{giusteri2017solution, bevilacqua2019soap, bevilacqua2020dimensional}, where the wire is a thick ``elastic rod'', and where the energy appearing in the minimization problem takes into account the competition between the elasticity of the rod and the area of the soap film.

\subsection{Soap film capillarity problem}\label{sub:intro-beautiful}

The focus of this paper is on a minimization problem proposed by Maggi, Scardicchio, and the second-named author in \cite{maggi2019soap}, where soap films are modeled as open sets in $\R^d$ enclosing a prescribed, small volume, and whose boundary satisfies the homotopic spanning condition defined above. The introduction of the volume constraint is justified by the fact that the soap films observed in nature, in the physical dimension $d=3$, are three-dimensional structures, with small but non-zero thickness, containing a given amount of fluid. This point of view introduces a length scale in Plateau's problem, which is essential in order to explain and characterize the bubble and film burst phenomenon; see the discussion in  \cite[Section 2]{maggi2019soap}.
Let us explain the model more in detail, following the rigorous definition provided in \cite{king2022plateau}. Let the ``boundary wire'' be a given, suitably regular compact set $W \subset \R^{d}$, let $\mathcal C \subset \mathscr{C}_W$ be a spanning class for $W$, and let $\varepsilon > 0$. A competitor for the soap film capillarity problem at volume $\varepsilon$ is then an open subset $E \subset \Omega$ of the admissible region $\Omega := \R^d \setminus W$ so that ${\rm Vol}(E) = \varepsilon$ and $\Om \cap \pa E$ is $\mathcal C$-spanning $W$. The surface tension energy of such a competitor is 
$
\sigma \, \mathcal H^{d-1} (\Om \cap \pa E)\,,
$
where $\sigma > 0$ is a constant, the \emph{surface tension parameter}.
In what follows, we shall always set $\sigma=1$, for simplicity.  
We can then define the minimization problem
\begin{equation}\label{e:soap-film-cap-pb-intro}
\inf\Big\{\H^{d-1}(\Omega\cap\pa E)\ :\ E\in\mathcal E(W,\mathcal C),\, {\rm Vol}(E)=\e\Big\}\,,
\end{equation}
where 
\begin{equation*}
\E(W,\C):=\left\lbrace E\subset \Omega\,\colon\,\mbox{$E$ is open, $\Omega\cap\pa E\in\Sc(W,\C)$, and $\pa E$ is smooth in $\Omega$}\right\rbrace\,.
\end{equation*}
Notice that different choices of $\mathcal C$ may result in different minimizers of \eqref{e:soap-film-cap-pb-intro}.

\medskip

The existence theory for \eqref{e:soap-film-cap-pb-intro} was first studied by King, Maggi, and the second-named author in \cite{king2022plateau}. There, it is observed that, in general, a minimizing sequence $E_j$ for \eqref{e:soap-film-cap-pb-intro} may exhibit the following behavior: in some portion of space the sets $E_j$ may become thinner and thinner, with boundaries $\partial E_j$ consisting of two disjoint smooth surfaces that converge simultaneously to a portion of a single minimal surface (with multiplicity two). This phenomenon is referred to as {\it collapsing} in \cite{king2022plateau}. As a consequence of this possibility, \cite{king2022plateau} establishes for \eqref{e:soap-film-cap-pb-intro} the existence of \emph{generalized minimizers}: these are pairs $(K,E)$, obtained as a suitable, measure-theoretic limit of (a subsequence of) the $E_j$, where $E$ (the wetted region) is an open set with ${\rm Vol}(E) = \varepsilon$, $K$ contains $\Om \cap \pa E$ and is $\mathcal C$-spanning $W$, and the infimum in \eqref{e:soap-film-cap-pb-intro} is realized as the relaxed energy
\[
\mathcal H^{d-1}(\pa E \cap \Om) + 2\, \mathcal H^{d-1}(K \setminus \pa E)\,.
\]
For general choices of $W$, collapsing (explicitly, the condition that $K \setminus \pa E \neq \emptyset$) is unavoidable; see the discussions in  \cite{king2021collapsing,king2022plateau,MaggiNovackRestrepo}.
On the other hand, in the physical situation in which $W$ is a tubular neighborhood of a {\em nice curve} (for instance a circle in $\R^3$),  it is natural to expect that \eqref{e:soap-film-cap-pb-intro} admits a classical solution.
It is indeed conjectured in \cite[Example 1.1]{king2022plateau} that the non-existence of singular solutions to the homotopic Plateau problem \eqref{e:hp-intro} should be sufficient to exclude collapsing; 
however, in the literature thus far there were no examples of classes of wires $W$ for which non-collapsing (and thus existence of classical solutions to \eqref{e:soap-film-cap-pb-intro}) could be guaranteed. 

Motivated by the above considerations, in this paper we study the soap film capillarity problem under suitable geometric assumptions on the wire $W$ and the spanning class $\C$. In \cref{th:sogno_finale}, we prove an existence and regularity result for \eqref{e:soap-film-cap-pb-intro} for ``planar'' wires, which gives a positive answer to the above non-collapsing conjecture in this geometry.

\subsection{Main result}\label{sub:intro-main-theorem}
Throughout this paper we always assume that $W$ and $\mathcal C$ have the following properties:
 \begin{enumerate} 
 \item[(W)] $W$ is the $\delta$-tubular neighborhood 
\begin{equation}
    \label{Wcond}
    W=I_\delta(\Gamma):=\Big\{x\in \R^{d}\ :\ \text{\rm dist}(x,\Gamma)\le\delta \Big\}\,,
\end{equation}
 where $\Gamma$ is a smooth embedding of the $(d-2)$-dimensional sphere $\mathbb S^{d-2}$ in the hyperplane 
 $$\bp:=\{x_{d}=0\}=\R^{d-1}\times\{0\},$$ 
 and $\delta>0$ is so small that $\partial I_\delta(\Gamma)$ is a smoothly embedded submanifold of $\R^{d}$;
\item[(C)] the spanning class $\mathcal C$ is the set of all smooth loops $\gamma\in \mathscr{C}_{W}$, which are non-trivial elements of the fundamental group $\pi_1 (\Omega)$, where $\Omega = \R^d \setminus W$.
\end{enumerate}
\begin{figure}[htbp]
    \centering
   \begin{tikzpicture}[rotate=0, scale= 0.75]
\draw[thick, NavyBlue, very thick, name path=a] (0,3) arc [start angle=90, end angle = 141,x radius = 45mm, y radius = 45mm];
\draw[thick, NavyBlue, very thick, name path=aa] (0,3) arc [start angle=90, end angle = 39,x radius = 45mm, y radius = 45mm];
\draw[thick, NavyBlue, very thick, name path=b] (0,-3) arc [start angle=-90, end angle = -39,x radius = 45mm, y radius = 45mm];
\draw[thick, NavyBlue, very thick, name path=bb] (0,-3) arc [start angle=-90, end angle = -141,x radius = 45mm, y radius = 45mm];
\draw[thick, BlueViolet, very thick, name path=c] (0,2.2) arc [start angle=90, end angle = 141,x radius = 45mm, y radius = 45mm];
\draw[thick, BlueViolet, very thick, name path=cc] (0,2.2) arc [start angle=90, end angle = 39,x radius = 45mm, y radius = 45mm];
\draw[thick, BlueViolet, very thick, name path=d] (0,-2.2) arc [start angle=-90, end angle = -141,x radius = 45mm, y radius = 45mm];
\draw[thick, BlueViolet, very thick, name path=dd] (0,-2.2) arc [start angle=-90, end angle = -39,x radius = 45mm, y radius = 45mm];
\draw[thick, MidnightBlue, very thick, name path=e] (0,1.4) arc [start angle=90, end angle = 141,x radius = 45mm, y radius = 45mm];
\draw[thick, MidnightBlue, very thick, name path=ee] (0,1.4) arc [start angle=90, end angle = 39,x radius = 45mm, y radius = 45mm];
\draw[thick, MidnightBlue, very thick, name path=f] (0,-1.4) arc [start angle=-90, end angle = -141,x radius = 45mm, y radius = 45mm];
\draw[thick, MidnightBlue, very thick, name path=ff] (0,-1.4) arc [start angle=-90, end angle = -39,x radius = 45mm, y radius = 45mm];
\begin{scope}[transparency group]
\tikzfillbetween[of=a and c] {color=Cyan!20};
\end{scope}
\begin{scope}[transparency group]
\tikzfillbetween[of=aa and cc] {color=Cyan!20};
\end{scope}
\begin{scope}[transparency group]
\tikzfillbetween[of=bb and d] {color=Cyan!20};
\end{scope}
\begin{scope}[transparency group]
\tikzfillbetween[of=b and dd] {color=Cyan!20};
\end{scope}
\begin{scope}[transparency group]
\tikzfillbetween[of=c and e] {color=SkyBlue!20};
\end{scope}
\begin{scope}[transparency group]
\tikzfillbetween[of=cc and ee] {color=SkyBlue!20};
\end{scope}
\begin{scope}[transparency group]
\tikzfillbetween[of=d and f] {color=SkyBlue!20};
\end{scope}
\begin{scope}[transparency group]
\tikzfillbetween[of=dd and ff] {color=SkyBlue!20};
\end{scope}
\begin{scope}[transparency group]
\tikzfillbetween[of=e and f] {color=Emerald!20};
\end{scope}
\begin{scope}[transparency group]
\tikzfillbetween[of=ee and ff] {color=Emerald!20};
\end{scope}
\draw[thick, NavyBlue, very thick] (0,3) arc [start angle=90, end angle = 141,x radius = 45mm, y radius = 45mm];
\draw[thick, NavyBlue, very thick] (0,3) arc [start angle=90, end angle = 39,x radius = 45mm, y radius = 45mm];
\draw[thick, NavyBlue, very thick] (0,-3) arc [start angle=-90, end angle = -39,x radius = 45mm, y radius = 45mm];
\draw[thick, NavyBlue, very thick] (0,-3) arc [start angle=-90, end angle = -141,x radius = 45mm, y radius = 45mm];
\draw[thick, BlueViolet, very thick] (0,2.2) arc [start angle=90, end angle = 141,x radius = 45mm, y radius = 45mm];
\draw[thick, BlueViolet, very thick] (0,2.2) arc [start angle=90, end angle = 39,x radius = 45mm, y radius = 45mm];
\draw[thick, BlueViolet, very thick] (0,-2.2) arc [start angle=-90, end angle = -141,x radius = 45mm, y radius = 45mm];
\draw[thick, BlueViolet, very thick] (0,-2.2) arc [start angle=-90, end angle = -39,x radius = 45mm, y radius = 45mm];
\draw[thick, MidnightBlue, very thick] (0,1.4) arc [start angle=90, end angle = 141,x radius = 45mm, y radius = 45mm];
\draw[thick, MidnightBlue, very thick] (0,1.4) arc [start angle=90, end angle = 39,x radius = 45mm, y radius = 45mm];
\draw[thick, MidnightBlue, very thick] (0,-1.4) arc [start angle=-90, end angle = -141,x radius = 45mm, y radius = 45mm];
\draw[thick, MidnightBlue, very thick] (0,-1.4) arc [start angle=-90, end angle = -39,x radius = 45mm, y radius = 45mm];
\draw[draw=none, fill=white] (5,0) circle [radius = 2cm];
\draw[draw=none, fill=white] (-5,0) circle [radius = 2cm];
\draw[very thick, color=black, name path=a] (-5,0) circle [radius=2];
\draw[very thick, color=black, name path=b] (5,0) circle [radius=2];
\tikzfillbetween[of=a and b] {color=gray!30};
\draw [color = black] node at (6,-1) {$W$};
\draw [color = black] node at (-6,-1) {$W$};
\draw[very thick, densely dashed, color=black,->] (-8,0) -- (8,0);
\draw[very thick, densely dashed, color=black,->] (0,-3.6) -- (0,3.6);
\draw [color = NavyBlue, very thick] node at (0.5,2.6) {\large{\bm{$ \varepsilon_3$}}};
\draw [color = NavyBlue] node at (-0.5,-2.6) {\large{\bm{$ \varepsilon_3$}}};
\draw [color = BlueViolet] node at (0.5,1.8) {\large{\bm{$ \varepsilon_2$}}};
\draw [color = BlueViolet] node at (-0.5,.-1.8) {\large{\bm{$ \varepsilon_2$}}};
\draw [color = MidnightBlue] node at (0.5,0.8) {\large{\bm{$ \varepsilon_1$}}};
\draw [color = MidnightBlue] node at (-0.5,-0.8) {\large{\bm{$ \varepsilon_1$}}};
\end{tikzpicture}
    \caption{Graphical sectional representation of a specific case in the physically relevant case $d = 3$, see \cref{th:calotte_circonferenza}. The solutions are normal graphs over a portion of the plane $\{x_3 = 0\}$ foliating $\overline{\Omega}$, thus there exists a unique solution for a fixed amount of volume $\eps$.}
    \label{fig:foliazione}
\end{figure}
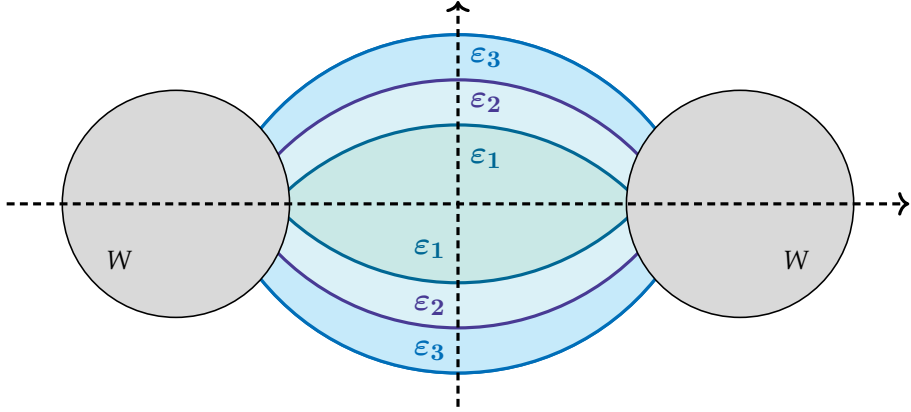 
For this choice of $W$ and $\mathcal C$, we prove that:
\begin{enumerate}[\rm (i)]
\item for every sufficiently small $\eps>0$, there is a unique solution $E^\eps$ to the soap film capillarity problem \eqref{e:soap-film-cap-pb-intro};
\item the boundary $\overline\Omega\cap\partial E^\eps$ is the union of two disjoint $C^\infty$ submanifolds with boundary having positive constant mean curvature, which are also smooth graphs over (a portion of) the hyperplane $\bp$;
\item the surfaces $\overline\Omega\cap\partial E^\eps$ are varying continuously (and smoothly) with respect to the parameter $\eps$ and, as $\eps\to0$, both components of $\overline\Omega\cap\partial E^\eps$ are approaching the plane $\bp$ in a $C^\infty$ way;
\item in a neighborhood of the hyperplane $\bp$, the family of surfaces $\big\{\overline\Omega\cap\partial E^\eps\big\}_{\eps\ge 0}$ are foliating $\overline\Omega$.
\end{enumerate}
These results are rigorously stated in our main theorem, which follows.

\begin{theorem}
\label{th:sogno_finale}
There exists $\delta_0 > 0$ with the following property. For every $\delta \in \left( 0, \delta_0\right]$, there exists $\e_0=\e_0(\de)>0$ so that, for $W=I_\de(\Gamma)$, $\Omega=\R^{d}\setminus W$ and $\mathcal C$ as above, and for every $\varepsilon \in \left( 0, \e_0\right]$, the soap film capillarity problem \eqref{e:soap-film-cap-pb-intro} admits a unique minimizer $E^\eps$, and the latter can be characterized as follows. There exist a closed domain $D^\e \subset \R^{d-1}$ with smooth boundary and a  function $u^\eps\colon \overline D^\eps\to \R$, smooth up to $\partial D^\eps$ and with $u^\eps(x') > 0$ for all $x' \in \overline D^\eps$ such that 
\begin{equation*}
    E^\eps = \Om \cap \left\{x' + t\, e_{d}\,:\, x'\in D^\eps\,,\, -u^\eps(x')< t< u^\eps(x') \right\}.
\end{equation*}
Furthermore, there exists $\lambda_\e\in(0, \Pi\eps)$, where $\Pi>0$ is a dimensional constant, such that the function $u^\eps$ satisfies
$$
  \left\{
  \begin{aligned}
  &-{\rm div}\, \left(\frac{\nabla u^\eps}{\sqrt{1 + \abs{\nabla u^\eps}^2}}\right) = \lambda_\e &&\hbox{ in }\ D^\eps\,,\\
  &\left( -\nabla u^\eps , 1 \right) \cdot \nu_W = 0 &&\hbox{ on }\ \partial D^\eps\,,
  \end{aligned}
  \right.
  $$
where $\nu_W$ is the outer unit normal vector field to $\pa W$. Moreover, as functions of $\e$, the sets $D^\eps$ and $E^\eps$ are (strictly) increasing (with respect to inclusion) as well as the functions $u^\eps$.
\end{theorem}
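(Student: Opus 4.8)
We prove \Cref{th:sogno_finale} by combining three ingredients: the existence and partial regularity of \emph{generalized} minimizers, from \cite{king2022plateau}; a rigidity step showing that for $\de,\eps$ small every generalized minimizer is in fact a \emph{classical} double graph, so that no collapsing occurs; and the analysis of the resulting scalar free boundary problem via the implicit function theorem and the maximum principle. Throughout, write $D_0\subset\bp$ for the planar domain enclosed by the ``inner equator'' of $\pa W$ --- the curve of points of $\pa W$ at which $\nu_W\perp e_d$ and whose projection lies inside $\Gamma$, i.e.\ $D_0=\{x'\ \text{inside}\ \Gamma:\dist(x',\Gamma)>\de\}$ --- so that $\overline{D_0}$ is relatively closed in $\Om$. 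The first step is to pin down the optimal energy. Thin symmetric slabs over domains mildly enlarging $D_0$ (so that the graphs attach to $\pa W$), with thickness chosen to enclose volume $\eps$, are admissible: they are $\mathcal C$-spanning because a vertical segment over any $x'\in D_0$ closes up, outside a large ball, to a smooth embedded loop of nonzero linking number with $\Gamma$, hence a nontrivial element of $\pi_1(\Om)$. This gives $\inf\eqref{e:soap-film-cap-pb-intro}\le 2\,\H^{d-1}(D_0)+C(\de)\eps^2<\infty$. Conversely, those same loops force, for every generalized competitor $(K,E)$ and a.e.\ $x'\in D_0$, that $\ell_{x'}:=\{x'\}\times\R$ satisfies $\H^0(\pa E\cap\Om\cap\ell_{x'})+2\,\H^0((K\setminus\pa E)\cap\ell_{x'})\ge 2$ (if $E$ meets $\ell_{x'}$, the intersection is a nonempty bounded open subset of $\ell_{x'}$, contributing at least two boundary points to $\pa E$; otherwise $K\setminus\pa E$ must block the loop); since vertical projection is $1$-Lipschitz, integrating over $D_0$ yields $\H^{d-1}(\pa E\cap\Om)+2\,\H^{d-1}(K\setminus\pa E)\ge 2\,\H^{d-1}(D_0)$. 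For the minimizer $(K^\eps,E^\eps)$ all of these inequalities are equalities up to $C(\de)\eps^2$: the collapsed part has area $O(\de)\eps^2$, the fibres $\pa E^\eps\cap\ell_{x'}$ have exactly two points for $x'$ outside a set of measure $O(\de)\eps^2$, and the projection is almost area-preserving on $\pa E^\eps$, so $\pa E^\eps$ is uniformly $C^0$-close to $\bp$.

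Second, we upgrade this to the classical picture. Away from $\pa W$ the spanning surface $K^\eps$ is $(\mathbf M,0,\infty)$-minimal, hence smooth by Taylor's regularity theorem \cite{Taylor76}, with constant mean curvature $\lambda_\eps$ on the part bounding $E^\eps$ (the volume multiplier) and vanishing mean curvature on $K^\eps\setminus\pa E^\eps$; the first variation at $\pa W$ gives orthogonality. A minimal surface of area $O(\de)\eps^2$ meeting $\pa W$ has diameter $O(\eps^{2/(d-1)})$ by the monotonicity formula, so it lies in a tiny neighbourhood of $\pa W$ and hence of $\Gamma$, and cannot meet any $\ell_{x'}$ with $x'\in D_0$; combined with the fibre count this forces $K^\eps\setminus\pa E^\eps=\emptyset$ --- no collapsing --- so $E^\eps$ is a genuine competitor attaining the infimum. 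Its $C^0$-closeness to $\bp$, the constancy of the mean curvature, and the orthogonality at $\pa W$ then bootstrap, via Schauder estimates up to the free boundary, to: $\overline\Om\cap\pa E^\eps=\{x_d=\pm u^\eps(x')\}$ over a smooth domain $D^\eps$, with $u^\eps$ smooth up to $\pa D^\eps$, $\|u^\eps\|_{C^{2,\alpha}}=O(\eps)$ and $u^\eps>0$, solving the prescribed-mean-curvature equation with multiplier $\lambda_\eps$ and the stated orthogonality boundary condition --- the symmetry $\pm u^\eps$ itself following from the invariance of $W$, $\Om$ and the energy under $x_d\mapsto -x_d$ together with the uniqueness established below. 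The bounds $0<\lambda_\eps<\Pi\eps$ come, respectively, from the volume-constrained first variation (whence $\lambda_\eps>0$, since $u^\eps\not\equiv0$ and the maximum principle) and from integrating the equation over $D^\eps$ and estimating the resulting contact term by the orthogonality condition and $\|u^\eps\|_{C^0}=O(\eps)$.

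Third, we solve the scalar free boundary problem on its own: for small $\lambda$, seek a domain $D_\lambda$, a normal graph perturbation of $D_0$, and a graph $u_\lambda$ over $D_\lambda$ with constant mean curvature $\lambda$ meeting $\pa W$ orthogonally along $\pa D_\lambda$. At $\lambda=0$ the pair $(D_0,u\equiv0)$ solves it, and the linearization is the (invertible) Laplacian with an oblique/Robin boundary condition on the flat disk, the boundary term having the favourable sign thanks to the geometry of $\pa W$ near its inner equator for $\de$ small; the implicit function theorem then produces a unique smooth family $\lambda\mapsto(u_\lambda,D_\lambda)$ of classical solutions, with $\tfrac{d}{d\lambda}\big(2\int_{D_\lambda}u_\lambda\,dx'\big)\big|_{\lambda=0}>0$, so that for each small $\eps>0$ there is a unique $\lambda_\eps$ with $2\int_{D_{\lambda_\eps}}u_{\lambda_\eps}\,dx'=\eps$. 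Since ${\rm Vol}(E^\eps)=\eps$ and $\|u^\eps\|_{C^0}=O(\eps)$ force $\lambda_\eps$ small, the minimizer produced in the first two steps lies on this branch, and conversely each branch member --- being the unique classical double graph a minimizer can be --- is the unique minimizer; this yields existence and uniqueness. Finally, differentiating the branch in $\eps$ and applying the maximum principle to the linearized equation (again using the sign of the Robin term for $\de$ small) gives $\pa_\eps u^\eps>0$ and the outward motion of $\pa D^\eps$, so $D^\eps$, $E^\eps$ and $u^\eps$ are strictly increasing in $\eps$, as claimed.

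The main obstacle is the rigidity of the first two steps, i.e.\ converting the sharp energy identity into genuine non-collapsing and full graphicality \emph{uniformly as $\eps\to0$}. The delicate region is a neighbourhood of the wire $\pa W$, where the two sheets of $\pa E^\eps$ must join the tube and where a collapsed ``fin'' lying along $\pa W$ is a priori conceivable; excluding it requires simultaneously $\de$ small (so that $\pa W$ is nearly a vertical cylinder near its inner equator, whence orthogonality forces graphicality) and $\eps$ small (to remain in the range where the monotonicity formula and free boundary CMC regularity apply), and it is precisely here that the non-existence of singular solutions to the Plateau problem for the planar curve $\Gamma$ is used. Making quantitative the implication ``$C^0$-close $\Rightarrow$ $C^\infty$-close up to the free boundary'' for the nonlinear mean curvature operator with the contact-angle condition is where most of the technical work lies.
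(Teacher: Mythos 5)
Your overall architecture (generalized minimizers $\to$ rigidity/non-collapsing $\to$ scalar capillary graph problem) is reasonable, and your third step (an implicit-function-theorem branch of solutions around $(D_0,0)$ used for uniqueness and monotonicity) is genuinely different from the paper, which deliberately avoids linearization and instead builds the foliation by sliding/viscosity comparison backed by uniform curvature estimates obtained through a hodograph transform. But the proposal has a real gap at its central step, the non-collapsing rigidity. Your claim that ``the collapsed part has area $O(\delta)\eps^2$'' does not follow from the energy identity: a multiplicity-two collapsed sheet lying over a portion of $D_0$ that is \emph{not} covered by $E$ contributes exactly $2$ to the fibre count, hence it sits inside the leading term $2\,\H^{d-1}(D_0)$ and not in the $O(\eps^2)$ excess. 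A ``small droplet plus large collapsed disc'' configuration is perfectly consistent with the sharp energy comparison, the fibre count, and the almost-area-preserving projection, so neither the smallness nor the diameter bound for $K^\eps\setminus\pa E^\eps$ is justified, and the conclusion $K^\eps\setminus\pa E^\eps=\emptyset$ does not follow. This is precisely the configuration the paper has to exclude, and it does so not by an energy-excess argument but by the two-component decomposition $S_\pm$ (\cref{prop:lunga!!!}), the sliding-plane arguments with Sch\"atzle's maximum principle, and the isoperimetric comparison in Step 3 of \cref{lemma:pernonesseretroppolunghi}.

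A second, related problem is circularity in the curvature control. For the volume-constrained problem the Lagrange multiplier $\lambda_\eps$ is \emph{not} a priori bounded as $\eps\to0$; yet your Step 2 already uses monotonicity formulas, ``uniform $C^0$-closeness to $\bp$'' (which needs density estimates with constants independent of $\lambda$), and Schauder/Allard-type bootstraps up to the contact line, all of which require exactly such a bound. You only derive $0<\lambda_\eps\le\Pi\eps$ \emph{after} assuming the smooth symmetric double-graph structure (and even then, integrating the equation over $D^\eps$ gives a boundary term bounded by $\H^{d-2}(\pa D^\eps)$, not by $C\eps$, unless one already has quantitative smallness of the contact angle). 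The paper breaks this circle by working with the penalized functional $\FF$, proving $\lambda>0$ at the level of generalized minimizers (\cref{thm:lambda>0}), and obtaining the uniform upper bound by touching $\pa S_+$ with large balls of volume-determined radius (\cref{l:upper_bound_curvatura_eps}); only then do Allard-type free-boundary regularity and the hodograph-based uniform $C^{2,\alpha}$ estimates become available, and these are what make the sliding/comparison (hence uniqueness, symmetry and the foliation) go through. Finally, two smaller inaccuracies: minimizers of the capillarity problem are not $(\mathbf M,0,\infty)$-minimal (there is a volume constraint), and Taylor's regularity theorem applies to two-dimensional films in $\R^3$, not in general dimension, so it cannot be invoked to get smoothness of $K^\eps$ at that stage; and the ``favourable sign'' of the linearized Robin condition, as well as the identification of the minimizer with the IFT branch, both rest on the uniform $C^{2,\alpha}$-smallness of $u^\eps$ that the missing rigidity step was supposed to provide.
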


\begin{remark}\label{rema:D0}
In \cref{th:sogno_finale}, by hypothesis, $W$ is a planar wire, that is,  the smooth embedding $\Gamma$ of $\mathbb{S}^{d-2}$ is contained in the hyperplane $\bp$. Therefore, the set $\bp\cap \Omega$ has exactly two connected components, of which one is bounded and the other one is unbounded. Hence, the homotopic Plateau problem \eqref{e:hp-intro} has a unique solution, given by the bounded component $D_0$ of $\bp\cap \Omega$ (see \cref{app:Plateau_unica_sol}).
\end{remark}

\begin{remark}
    Notice that, as a consequence of the monotonicity properties stated in \cref{th:sogno_finale}, the set $D_0$ and the family of graphs $\{{\rm graph} (u^\eps)\}_{\e \in \left( 0, \e_0 \right]}$ form (in a neighborhood of $D_0$) a foliation of $\overline\Omega$ by constant mean curvature hypersurfaces.
\end{remark}

The prototypical case one may think of is when $d=3$ and $\Gamma$ is a circle in $\bp$. In this case, one gets a much more precise description of the unique minimizer. Indeed, we can show that the graph of the function $u^\eps$ is a spherical cap of radius $\sfrac{2}{\lambda_\eps}$ meeting $\pa W$ orthogonally and such that the corresponding set $E^\varepsilon$ contains volume $\eps$; a sectional representation is depicted in \cref{fig:foliazione}. We give the precise statement in the next theorem, and we remark that it extends to any dimension $d\ge 3$ with the same proof. 

\begin{theorem}
\label{th:calotte_circonferenza}
 Let $d=3$ and $\Gamma = \pa B_1 \cap \{x_3 = 0\}$. There exists $\delta_0 > 0$ with the following property. For every $\delta \in \left( 0, \delta_0\right]$, there exists $\e_0=\e_0(\de)>0$ so that, for $W=I_\de(\Gamma)$, $\Omega=\R^{3}\setminus W$ and $\mathcal C$ as above, and for every $\varepsilon \in \left( 0, \e_0\right]$, the soap film capillarity problem \eqref{e:soap-film-cap-pb-intro} admits a unique minimizer $E^\eps$ whose boundary in $\Omega$ is the union of two spherical caps $SC_{\lambda_\eps}$. More precisely,
 $$
 E^\varepsilon=\mathcal B_{\lambda_\eps}=\Omega \cap B_{\sfrac{2}\lambda_\eps}(0,0,z_C) \cap B_{\sfrac{2}{\lambda_\eps}}(0,0,-z_C),
 $$
 where $\lambda_\eps >0$ is the curvature and $z_C$ is the position of the center of the sphere on the axis $x_3$ so that $\mathcal B_{\lambda_\eps}$ contains volume $\eps$. Moreover, as a function of $\eps \in (0, \varepsilon_0]$, the sets $\mathcal B_{\lambda_\eps}$ are strictly increasing with respect to inclusion, and their boundaries $\Omega \cap SC_{\lambda_\eps}$ are disjoint and, together with $D_0$, foliate a portion of $\overline\Omega$.
\end{theorem}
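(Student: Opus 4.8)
The plan is to derive \cref{th:calotte_circonferenza} from \cref{th:sogno_finale} by exploiting the extra rotational symmetry available when $\Gamma$ is a round $(d-2)$-sphere; the argument below in fact works verbatim in every dimension $d\ge 3$. First I would symmetrize using uniqueness. In the present setting $W=I_\delta(\Gamma)$ is a solid tube of revolution about the $x_d$-axis, hence invariant under the natural action of $O(d-1)$ on $\R^d$ fixing that axis; and any such rotation $R$ restricts to a diffeomorphism of $\Omega$, so it maps $\mathscr C_W$ to itself, preserves non-triviality in $\pi_1(\Omega)$, and therefore $R(\mathcal C)=\mathcal C$. Since $R$ is an isometry sending $W$ to $W$, it maps $\E(W,\mathcal C)$ into itself and preserves both ${\rm Vol}$ and $\mathcal H^{d-1}$; hence $R(E^\varepsilon)$ is again a minimizer of \eqref{e:soap-film-cap-pb-intro} at volume $\varepsilon$, and the uniqueness part of \cref{th:sogno_finale} forces $R(E^\varepsilon)=E^\varepsilon$. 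Comparing with the graph representation of \cref{th:sogno_finale}, we conclude that $D^\varepsilon$ is rotationally symmetric and $u^\varepsilon$ is radial, $u^\varepsilon(x')=\bar u^\varepsilon(|x'|)$.

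Next I would locate $D^\varepsilon$. By \cref{rema:D0} the bounded component $D_0$ of $\bp\cap\Omega$ is the ball $\{|x'|<1-\delta\}$, and by the $C^\infty$-convergence assertion in \cref{th:sogno_finale} (the graphs of $u^\varepsilon$ approach $\bp$ as $\varepsilon\to0$) the domains $D^\varepsilon$ converge to $D_0$; hence, for $\varepsilon$ small, $D^\varepsilon$ is a closed ball $\{|x'|\le r^\varepsilon\}$ with $r^\varepsilon\to 1-\delta$, and in particular the origin lies in its interior. Since $u^\varepsilon$ is smooth across $0$ and radial, this gives $(\bar u^\varepsilon)'(0)=0$. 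Writing the PDE of \cref{th:sogno_finale} for the radial profile and integrating once from $0$ to $r$ yields
\begin{equation*}
\frac{(\bar u^\varepsilon)'(r)}{\sqrt{1+|(\bar u^\varepsilon)'(r)|^2}}=-\frac{\lambda_\varepsilon}{d-1}\,r, \qquad 0\le r\le r^\varepsilon,
\end{equation*}
and a second integration gives the explicit profile, equivalent to $\big(\bar u^\varepsilon(r)-z_C\big)^2+r^2=\big(\tfrac{d-1}{\lambda_\varepsilon}\big)^2$ with $z_C:=\bar u^\varepsilon(0)-\tfrac{d-1}{\lambda_\varepsilon}$. Hence (taking $d=3$) $\graph(u^\varepsilon)$ is an arc of the sphere $\partial B_{2/\lambda_\varepsilon}(0,0,z_C)$, that is, a spherical cap $SC_{\lambda_\varepsilon}$, and it meets $\partial W$ orthogonally by the Neumann condition; by the reflection symmetry $t\mapsto -t$ built into the ansatz, the lower component of $\Omega\cap\partial E^\varepsilon$ is the reflected arc, lying on $\partial B_{2/\lambda_\varepsilon}(0,0,-z_C)$.

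It then remains to identify the solid region and its parameters. A short computation shows that the spheres $\partial B_{2/\lambda_\varepsilon}(0,0,\pm z_C)$ meet exactly along the circle $\{x_3=0,\ |x'|=\sqrt{(2/\lambda_\varepsilon)^2-z_C^2}\}$; this pins down $r^\varepsilon=\sqrt{(2/\lambda_\varepsilon)^2-z_C^2}$ and shows that $\{x'+t e_d:\ |x'|\le r^\varepsilon,\ |t|<\bar u^\varepsilon(|x'|)\}=B_{2/\lambda_\varepsilon}(0,0,z_C)\cap B_{2/\lambda_\varepsilon}(0,0,-z_C)$, whence $E^\varepsilon=\Omega\cap B_{2/\lambda_\varepsilon}(0,0,z_C)\cap B_{2/\lambda_\varepsilon}(0,0,-z_C)=\mathcal B_{\lambda_\varepsilon}$. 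The two scalar unknowns $\lambda_\varepsilon$ and $z_C$ are then tied down by the orthogonality condition at $\partial W$ together with ${\rm Vol}(\mathcal B_{\lambda_\varepsilon})=\varepsilon$; their existence and uniqueness, and the estimate $\lambda_\varepsilon\in(0,\Pi\varepsilon)$, all come for free from \cref{th:sogno_finale}. Finally, the strict monotonicity of $\varepsilon\mapsto\mathcal B_{\lambda_\varepsilon}$ with respect to inclusion is the monotonicity statement of \cref{th:sogno_finale}, and the fact that the caps $\Omega\cap SC_{\lambda_\varepsilon}$ are pairwise disjoint and, together with $D_0$, foliate a neighborhood of $D_0$ in $\overline\Omega$ is the Remark following \cref{th:sogno_finale}, now made concrete by the spherical description.

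The only step I expect to require genuine (if mild) care is the localization of $D^\varepsilon$ in the second paragraph: one must rule out that $D^\varepsilon$ is, say, an annulus, in which case the first integral of the radial ODE would carry a nonzero constant and the profile would be a piece of a Delaunay-type surface of revolution rather than a spherical arc. This is precisely where the $C^\infty$-closeness of $\Omega\cap\partial E^\varepsilon$ to $\bp$ and the fact (\cref{rema:D0}) that the relevant component $D_0$ of $\bp\cap\Omega$ is a ball enter essentially, forcing $0$ to lie in the interior of $D^\varepsilon$ and hence $(\bar u^\varepsilon)'(0)=0$. Everything else is either a direct consequence of \cref{th:sogno_finale} or an elementary ODE computation.
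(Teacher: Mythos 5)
Your proposal is correct in outline but follows a genuinely different route from the paper. You deduce \cref{th:calotte_circonferenza} from the full statement of \cref{th:sogno_finale} (uniqueness $\Rightarrow$ rotational invariance of $E^\eps$, then the radial ODE for a CMC graph forces a spherical profile). This is logically admissible: the proof of \cref{th:sogno_finale} (Sections 5--6, i.e.\ the $\eps$-regularity, the hodograph-transform curvature estimates, \cref{th:ordered_graphs} and \cref{t:symmetry}) nowhere uses Section 4, so there is no circularity. The paper, however, proves \cref{th:calotte_circonferenza} directly in \cref{th:circonferenza3D}, \emph{before} and independently of the graphicality, uniqueness and comparison machinery: it only uses the non-collapsing and regularity package of Sections 2--3 (\cref{prop:lunga!!!}, \cref{lemma:pernonesseretroppolunghi}, \cref{l:upper_bound_curvatura_eps}, \cref{rem:nocollapsing}) and then a sliding argument with the explicit spherical caps $SC_\kappa^\pm$ as barriers, concluding by the maximum principle and the orthogonality condition at $\partial W$. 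That is precisely why the circle case is singled out as having a ``quite different'' and much cheaper proof; your derivation buys brevity only at the price of invoking the heaviest theorem of the paper. A small remark on your second paragraph: the cleanest way to see that $D^\eps$ is a disc containing the origin (ruling out the annular/Delaunay alternative) is not the $C^\infty$-closeness, which is not literally part of the statement of \cref{th:sogno_finale}, but the spanning condition: ${\bf p}(\Omega\cap\partial E^\eps)\supset D_0$ (as in \cref{sec:lambda_positive}), so $D^\eps\supset D_0\ni 0$, and rotational invariance plus smoothness of $\partial D^\eps$ do the rest.

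There is one concrete slip in your third paragraph that needs repair. The identity ``$r^\eps=\sqrt{(2/\lambda_\eps)^2-z_C^2}$'' and the asserted set equality $\{x'+te_d:\ |x'|\le r^\eps,\ |t|<\bar u^\eps(|x'|)\}=B_{2/\lambda_\eps}(0,0,z_C)\cap B_{2/\lambda_\eps}(0,0,-z_C)$ are false as stated: since $u^\eps>0$ on $\overline{D^\eps}$, the graph stops where the cap meets $\partial W$ at positive height (radius $1-\delta\cos\theta$ in the notation of \cref{sec:spherical_caps}), which is strictly smaller than the equatorial radius of the lens, namely $\sqrt{(2/\lambda_\eps)^2-z_C^2}=\sqrt{1-\delta^2}$. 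So the lens strictly contains the region between the two graphs, and the correct claim is only
\begin{equation*}
\Omega\cap\{x'+te_d:\ x'\in D^\eps,\ |t|<u^\eps(x')\}\;=\;\Omega\cap B_{\sfrac{2}{\lambda_\eps}}(0,0,z_C)\cap B_{\sfrac{2}{\lambda_\eps}}(0,0,-z_C)\,.
\end{equation*}
To prove it you must check that the ``tip'' of the lens beyond the contact circle is entirely contained in the closed tube $W$, so that it is removed upon intersecting with $\Omega$. This is where the orthogonality condition and $\delta\le\delta_0$ enter: a short computation (the one the paper performs when constructing $\mathcal B_\kappa$) gives that the contact circle lies on $\partial W$, that the equatorial circle has radius $\sqrt{1-\delta^2}$, hence distance $1-\sqrt{1-\delta^2}<\delta$ from $\Gamma$, and that the arcs of the spheres joining the contact circle to the equator stay within distance $\delta$ of $\Gamma$ because their curvature $\sfrac{\lambda_\eps}{2}$ is small and their endpoints lie in the convex cross-sectional disc of $W$. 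With this verification added, your argument is complete.
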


\paragraph{Main difficulties}

The key steps and the major difficulties in the proof of \cref{th:sogno_finale} are the following:
\begin{itemize}
\item {\it non-collapsing:} proving that the boundary of $E$ in $\Omega$ is composed of two disjoint $C^{1,\alpha}$ graphs; 
\item {\it foliation:} proving that the sets $E$ are increasing with respect to the volume $\eps$ and that their constant mean curvature boundaries form a foliation of the space.
\end{itemize}

The {\it non-collapsing} property relies on a uniform (with respect to the volume $\eps$) upper bound on the generalized mean curvatures $\lambda$, which allows to prove Hausdorff and varifold convergence of minimizers to the disc as $\eps\to0$; we explain our strategy in detail in \cref{sub:strategy_intro} below.

The {\it foliation} property on the other hand is a consequence of uniform (in terms of the volume $\eps$ and the thickness $\delta$ of the tubular neighborhood) bounds of sectional curvatures of the surface $\pa E \cap \Omega$, which we obtain through an hodograph-type transform. Foliations of the space with non-trivial global area-minimizing surfaces are known to exist only in few occasions; for instance such foliations have been previously constructed in the seminal work of Hardt and Simon \cite{simon1985area} (see also \cite{de2022inhomogeneous} for the free boundary counterpart) around singular cones. 
The construction from \cite{simon1985area} is based on a linearization argument, which cannot be applied to our case due to the volume constraint and the boundary conditions on the curved wire $W$. Thus, we build the foliation adopting a different technique based on the above curvature bounds and on viscosity arguments, which are specific for ``planar'' wires, but do not require further geometric or regularity assumptions on the wire.

Finally, we stress that the proof of \cref{th:calotte_circonferenza} is quite different with respect to the general case. Indeed, once we have the uniform upper bound on the mean curvature, in $d=3$ and when $\Gamma \subset \bp$ is a circle, we can construct an explicit foliation of the space with spherical caps of mean curvature $\lambda_\varepsilon$ containing volume $\eps$. Precisely, the thesis is a consequence of a sliding viscosity argument, where competitors are chosen to be suitable spherical caps, and the application of the maximum principle (see \cref{th:circonferenza3D}). We discuss the case $d=3$ for simplicity, but the same proof applies in any dimensions $d \geq 3$.

\subsection{Strategy of the proof}\label{sub:strategy_intro}

In order to prove that minimizers of \eqref{e:soap-film-cap-pb-intro} exist, it is natural to consider a minimizing sequence for the variational problem \eqref{e:soap-film-cap-pb-intro} of (smooth) sets $E_j\in \mathcal{E}(W,\C)$ with ${\rm Vol}(E_j) = \varepsilon$. Since $E_j$ is a sequence of sets of uniformly bounded perimeter, by the classical theory of the sets of finite perimeter, we have that, up to a subsequence, $E_j$ converges to a Lebesgue measurable set $E\subset \Omega$ such that 
$$
{\rm Vol}(E)=\lim_{j\to\infty}{\rm Vol}(E_j)=\eps\qquad\text{and}\qquad \text{\rm Per}(E;\Omega)\le \liminf_{j\to\infty}\text{\rm Per}(E_j;\Omega),$$
where $\text{\rm Per}(E;\Omega):=\H^{d-1}(\partial^\ast E\cap\Omega)$ is the relative perimeter of $E$ (in $\Omega$) and $\partial^\ast E$ is the reduced boundary of $E$. Now, the fundamental difficulty with respect to the classical isoperimetric problem is that it is not immediate to guarantee that the limit set $E$ satisfies the spanning condition:
if in some ball $B_r(x_0)\subset \Omega$ the sets $E_j$ are getting thinner and thinner, the limit $E$ could be empty in $B_r(x_0)$ so some of the loops in $\mathcal C$ might not be crossing $\partial E$.
In \cite{king2022plateau}, this problem was overcome by taking the limits (of suitable modifications) of both the sets $E_j$ and their boundaries $\Omega \cap \partial E_j$ in a measure-theoretic sense.
Thus, one obtains a pair $(K,E)$ of sets such that:  
$E$ is an open set of finite perimeter which has measure ${\rm Vol}(E) = \eps$ and satisfies $\Omega \cap {\rm cl}(\pa^*E) = \Omega \cap \pa E$, while $K$ is a relatively compact, $\H^{d-1}$-rectifiable, $\mathcal C$-spanning set in $\Omega$ and $\Omega \cap \partial E\subset K$. 

By defining the relaxed energy of the pair $(K,E)$ as
\begin{equation}
\label{e:intro_funzionaleF}
    \F(K,E):= \H^{d-1}(\Omega\cap\pa^* E)+ 2\,\H^{d-1}(K\setminus\pa^* E)\,,
\end{equation}
by \cite{king2022plateau}, it holds 
$$
\F(K,E)=\lim_{j\to\infty}\text{\rm Per}(E_j;\Omega)=\inf\Big\{\H^{d-1}(\Omega\cap\pa E)\ :\ E\in\mathcal E(W,\mathcal C),\, {\rm Vol}(E)=\e\Big\}\,.
$$
Notice that the relaxed energy consists of the sum of the perimeter of the \emph{wetted region} $E$ in $\Om$ plus the area of the portion of the $\mathcal C$-spanning surface $K$ that lies outside of the reduced boundary of the wetted region \emph{counted with multiplicity $2$}, following the fact that the latter emerges from the collapsing of boundaries along a minimizing sequence.
We stress that the generalized minimizers $(K,E)$ provided by \cite{king2022plateau} are not known to be minimizers of the relaxed energy $\F$,  
but are only minimal with respect to smooth deformations preserving the volume.

To obtain a pair $(K,E)$ that minimizes $\mathcal F$ or a suitable modification thereof, one can apply the recent result of Maggi, Novack, and Restrepo \cite{MaggiNovackRestrepo}, which provides the existence of minimizers for the relaxed energy functional
\[
\widehat{\mathcal F}(K,E):=\mathcal H^{d-1}(\Omega \cap \pa^*E) + 2\, \mathcal H^{d-1}(K \cap E^{(0)}),
\]
 among all pairs $(K,E)$ of sets so that $K$ and $E$ are Borel subsets of $\Omega$, $E$ has locally finite perimeter in $\Omega$, $\mathcal{H}^{d-1}((\Omega \cap \pa^*E) \setminus K)=0$, ${\rm Vol}(E)=\varepsilon$, and $K\cup E^{(1)}$ is $\mathcal C$-spanning $W$. Here, $E^{(\theta)}$ is the set of points in $\R^d$ of Lebesgue density $\theta$ for $E$. Notice that for this process to work one needs to make sense of the $\mathcal C$-spanning condition for a merely \emph{Borel set}: this was achieved in \cite{MaggiNovackRestrepo} through the clever introduction of a measure-theoretic overhaul of the homotopic spanning condition, and we refer to their paper for more on this point.

 Thanks to \cite[Theorem 1.6]{MaggiNovackRestrepo}, minimizers of $\widehat{\mathcal F}$ can be taken so that $E$ is an open set with finite perimeter, $K\subset \Omega$ is relatively closed,  $(d-1)$-rectifiable, $\Om \cap {\rm cl}(\pa^* E) = \Om \cap \pa E \subset K$, and $K \cap E^{(1)}=\emptyset$.   In particular, this immediately implies that a minimizer $(K,E)$ of $\widehat{\mathcal F}$ is such that $K \cup E$ satisfies the $\mathcal C$-spanning condition in the classical sense, and $\widehat{\mathcal F}(K,E) = \mathcal F (K,E)$. Thus, \cite{MaggiNovackRestrepo} provides a minimizer $(K,E)$ of the variational problem
\begin{align}
\label{e:inf_F_intro}
\inf\Big\{\mathcal F(K,E)\ :\ K\cup E \in\mathcal S(W,\mathcal C),\, {\rm Vol}(E)=\e\Big\},
\end{align}
and we have that
\begin{align*}
\mathcal F(K,E)\le \inf\Big\{\H^{d-1}(\Omega\cap\pa E)\ :\ E\in\mathcal E(W,\mathcal C),\, {\rm Vol}(E)=\e\Big\}.
\end{align*}
Moreover, given a minimizer $(K,E)$, the integral varifold $V$ associated to the measure  
$$\mu:=\H^{d-1}\text{\Large$\llcorner$}(\partial^\ast E)+2\,\H^{d-1}\text{\Large$\llcorner$}(K\setminus\partial^\ast E),$$
has $L^\infty$-bounded generalized mean curvature vector (equal to $\lambda\, \nu_E$, with $\lambda \in \R$ a Lagrange multiplier associated with the volume constraint, on $\Omega \cap\partial^*E$ and $0$ elsewhere), and as $\eps\to0$, $\mu$ converges weakly* to the measure $\mu_0:=2\,\H^{d-1}\text{\Large$\llcorner$}(D_0)$, where $D_0$ is the unique solution to the homotopic Plateau problem. We emphasize that, in general, there are no uniform curvature bounds for $K$ and $E$ and the curvature $\lambda$ might be unbounded as $\eps\to0$.

\medskip

In \cref{sec:esistenza_min_generalizzati}, we introduce the auxiliary energy functional
$$
\widehat{\mathcal F}_{\Lambda,\eps}(K,E) = \H^{{d-1}}(\Omega \cap \partial^\ast E) + 2\H^{{d-1}}(K \cap E^{(0)}) + \Lambda \abs{{\rm Vol}(E) - \eps},
$$
where $\Lambda >0$ is a constant. Applying \cite{MaggiNovackRestrepo} and ensuring that the sequence of volumes ${\rm Vol}(E_j)$ of a minimizing sequence $(K_j,E_j)$ is not vanishing, in \cref{prop:esistenza_FF} we show that there is a minimizer $(K,E)$ of the problem
\begin{equation}
\label{e:inf_intro_FLambdaEps}
\inf\Big\{\widehat{\mathcal F}_{\Lambda,\eps}(K,E)\ :\ K\cup E^{(1)}\in\mathcal S(W,\mathcal C)\Big\}\,.
\end{equation}
A minimizer $(K,E)$ of \eqref{e:inf_intro_FLambdaEps} has the same properties as minimizers of \eqref{e:inf_F_intro}, and in particular one may just work with the functional
\[
\FF(K,E) = \H^{{d-1}}(\Omega \cap \partial^\ast E) + 2\H^{{d-1}}(K \setminus \pa^*E) + \Lambda \abs{{\rm Vol}(E) - \eps}\,,
\]
with the constraint that $K\cup E \in \mathcal S(W,\mathcal C)$, namely $K\cup E$ is $\mathcal{C}$-spanning $W$ in the classical sense of \cref{sub:intro-plateau}.
On the other hand, it has the advantage that the constant mean curvature $\lambda$ of $\pa^*E$ is controlled by the volume variation, that is the positive constant $\Lambda$, see \cref{lemma:stime_curvature}. Next, we aim to show that for any minimizer of $\FF\,$, volume of $E$ is precisely $\eps$. In fact, this will immediately lead to: 
\begin{itemize}
\item for small $\eps$ the two problems \eqref{e:inf_F_intro} and \eqref{e:inf_intro_FLambdaEps} are equivalent;
\item the generalized mean curvature of the solutions of \eqref{e:inf_F_intro} is uniformly bounded as $\eps\to0$;
\item the minimizers of \eqref{e:inf_F_intro} converge in the Hausdorff sense to $D_0$ as $\eps\to0$. 
\end{itemize}

We obtain ${\rm Vol}(E) = \eps$ as a consequence (see \cref{lemma:stime_curvature}) of the following upper and lower bounds on the generalized mean curvature $\lambda$
$$
0 < \lambda \leq \Pi \eps,
$$
where $\Pi>0$ is a positive dimensional constant, see \cref{sec:bounds_sulle_curvature}. The lower bound follows from a viscosity argument combined with the Schatzle's maximum principle \cite[Theorem 6.2]{schatzle2004quadratic}, where we use as competitors (smooth deformations of) planes parallel to $\bp$ coming from infinity and touching $K$ from outside.  
The upper bound is more involved as it requires the construction of competitors touching $\partial E$ from inside of $E$. 

To prove the upper bound, we proceed as follows. First, combining the weak* convergence of $(K,E)$ and the densities estimates, we show that for small $\eps$, $K\cup E$ lies in a (open) thin strip $\mathcal{T}$ containing the disc $D_0$. 
Then, we consider the only bounded connected component $\widetilde\Omega$ of the set $\Omega \cap\mathcal{T}$ containing $D_0$. By a careful analysis of the Lebesgue density of $\H^{d-1}$-almost every point $x_0 \in K$ and the relative isoperimetric inequality, see \cref{prop:lunga!!!}, we show that for small volumes the set $\widetilde\Omega\setminus(K \cup E)$ has exactly two connected components, which we denote by $S_+$ and $S_-$. 
Then, any minimizer $(K,E)$ of $\FF$ can be rewritten, up to a $\H^{d-1}$-negligible set, as
$$
K=
\widetilde\Omega \cap(\partial S_+\cup\partial S_-)\qquad\text{and}\qquad  E=\widetilde\Omega\setminus (\overline S_+\cup\overline S_-),
$$
getting that the energy results into
$$\FF(K,E)=\mathcal G(S_+,S_-),$$
where the functional $\mathcal G$ is defined on the couples of disjoint sets of finite perimeter $S_+,S_-$ as
$$\mathcal G(S_+,S_-):=\H^{d-1}(\widetilde\Omega\cap \partial^\ast S_+)+\H^{d-1}(\widetilde\Omega\cap \partial^\ast S_-)+\Lambda\,\left|{\rm Vol}\left(\widetilde\Omega\setminus (\overline S_+\cup\overline S_-)\right)-\eps\right|.
$$
Now, we apply again a viscosity sliding argument, the maximum principle, and the fact that $S_+$ and $S_-$ are almost minimizers for the perimeter. Precisely, using, as competitors, planes parallel to $\bp$ coming from below and touching $S_+$ from outside, we show that: $\overline S_+$ lies in the upper half-space, $\widetilde\Omega \cap \pa S_+$ has constant mean curvature $\lambda$ and it meets $\pa \Omega$ orthogonally (analogously, $\overline S_-$ lies in the lower half-space and it has the same properties), see \cref{lemma:pernonesseretroppolunghi}. 
Finally, by constructing a suitable family of large balls from outside of $S_+$, we prove the uniform upper bound on the generalized mean curvature $\lambda$ (see \cref{l:upper_bound_curvatura_eps}), which gives that ${\rm Vol}(E)=\eps$. 

\medskip

At this point, as a consequence of the results from \cref{sec:bounds_sulle_curvature} and Allard's epsilon-regularity theorem (see \cref{sec:regolarita}), for small $\eps$, we have obtained that any minimizer of $(K,E)$ of \eqref{e:inf_F_intro} has the following properties: 
\begin{itemize}
\item {\it regularity:} $\Omega \cap \partial E$ is a smooth surface with constant mean curvature, which attaches orthogonally to $\partial\Omega$;
\item {\it non-collapsing:} $K=\Omega \cap \partial E$ has exactly two disjoint connected components, namely 
$$ K_+:=\widetilde\Omega\cap\partial S_+\qquad\text{and}\qquad K_-:=\widetilde\Omega\cap\partial S_-\,,$$ 
lying respectively in $\{x_d>0\}$ and $\{x_d<0\}$.
\end{itemize}
Once the regularity property on $\Omega \cap \pa E$ is obtained, \cref{th:calotte_circonferenza} follows by applying again a viscosity argument. Precisely, by choosing as competitors spherical caps slid from above and from below and using a similar technique as the one adopted in \cref{lemma:pernonesseretroppolunghi}, we show that $K_+$ and $K_-$ coincide with $SC_{\lambda} \cap \Omega\cap\{x_3 >0\}$ and $SC_{\lambda} \cap\Omega\cap \{x_3 <0\}$ respectively (see \cref{sec:spherical_caps}), where $\sfrac{2}{\lambda}$ is the radius of the spherical cap.

\medskip

Finally, to conclude the proof of \cref{th:sogno_finale}, in \cref{sec:graph_ordered}, we construct the foliation of the space for general planar curves $\Gamma \subset \bp$ (\cref{th:ordered_graphs}) and we prove that the optimal sets are symmetric (\cref{t:symmetry}). We consider two different volumes $\eps_1$ and $\eps_2$ and the corresponding boundaries $K_+^{\eps_1}$ and $K_+^{\eps_2}$. In order to show that $K_+^{\eps_1}$ and $K_+^{\eps_2}$ are ordered with respect to the volume, we slide $K_+^{\eps_1}$ until it touches $K_+^{\eps_2}$ from one side at some point $x$. For internal touching point $x\in\Omega$, the maximum principle guarantees that the mean curvatures and the two boundaries are ordered (see \cref{th:ordered_graphs}). Thus, we only need to exclude the existence of contact points at the fixed boundary $\partial\Omega$.
To rule out this possibility, we show that the fixed boundary $\partial\Omega$ ``bends faster'' than the free boundary $K_+$, that is, the sectional curvatures of $K_+$ are bounded by a universal constant independent of the radius $\delta$ of the tubular neighborhood $W$. 
Precisely, we obtain these curvature bounds as a consequence of uniform $C^{2,\alpha}$ estimates on the functions $u^\eps_{\pm}$ up to the boundary. Thus, first we perform a change of variable to reparametrize the tubular neighbourhood $W$ and making its boundary flat, as 
$$
    W= \Big\{\big(\phi(s,t),x_d\big)\ :\ s\in B'',\ t\in (-\delta_0,\delta_0),\,\ x_d\in (-\delta_0,\delta_0), \,\ t^2+x_d^2\le \delta^2\Big\},
$$
where $B''$ is a ball in $\R^{d-2}$, $\phi(s,t) = \gamma(s) + t \nu_\gamma(s)$ is a suitable smooth parametrization of the tubular neighborhood constructed over $\Gamma$ and then we set $v(s,t) := u^\eps_+(\phi(s,t))$.
We introduce the following hodograph-type transform 
$$
v\Big(s,(\delta + \rho) \cos\Theta(s, \rho)\Big) =(\delta + \rho) \sin\Theta(s, \rho),
$$
where the function $\Theta: B''\times [0,\delta_0)\to\R$ exists by the $\C^{1,\alpha}$- regularity provided in \cref{sec:regolarita}. The geometric meaning of this transform is the following: we parametrize any point $P$ on the surface $K_+$ in terms of:
\begin{itemize}
    \item the projection $\gamma(s)$ of $P$ on the $(d-2)$-dimensional manifold $\Gamma$;
    \item the distance $\rho$ from $P$ to the tubular neighborhood $W$ of $\Gamma$;
    \item the angle $\Theta$ between $P-\gamma(s)$ and the plane $\bp$. 
\end{itemize}
This transform allows to parametrize the surfaces $K_\pm$ in terms of solutions to the following nonlinear elliptic PDE problem with Neumann boundary conditions 
\begin{equation*}
    \left\{
\begin{aligned}
    &{\rm div}\left(\frac{N}{\sqrt{1 + M}}{\big(\tens{I}+\tens{T}\tens{B}\tens{T}\big) \nabla \Theta} \right) = \frac{G}{(\delta+\rho)^2} \quad&&\hbox{ in } B''\times (0,\delta_0),\\
    & e_{d-1}\cdot (\tens{I}+\tens{T} \tens{B}\tens{T})\nabla \Theta= 0\quad &&\hbox{ on } B''\times\{0\},
\end{aligned}
\right.
\end{equation*}
where $M= M(s, \rho, \Theta, \nabla \Theta)$, $N=N(s, \rho, \Theta)$ are scalars, $\tens{T} = \tens{T}(\Theta)$ is a diagonal matrix and $\tens{B} = \tens{B}(s, (\delta + \rho)\cos\Theta)$ is a symmetric matrix that depends on the performed change of variables $\phi$. The key observation is that the right-hand side $G= G(s, \rho, \Theta, \nabla \Theta)$ can be bounded in terms of the $C^{1,\alpha}$ norm of $\Theta$, which in turn is controlled by the flatness of $K_+$ at scale $1$. Thus, by taking the volume $\eps$ small enough, we get that $G$ is uniformly bounded by a constant independent on $\delta$, so by the classical Schauder estimates this provides the desired uniform $C^{2,\alpha}$ estimate on $\Theta$ and $u_\pm^\eps$.

\section{A family of auxiliary variational problems}
\label{sec:esistenza_min_generalizzati}

Let the sets $W$, $\Omega$ and the spanning class $\mathcal C$ be as in (W) and (C). We say that a pair $(K,E)$ is {\it admissible} for $W$ and $\mathcal C$, and we write $(K, E) \in \mathcal A (W, \mathcal C)$ if: 
\begin{itemize}
\item[(i)] $K \subset \Om$ is relatively closed and $\mathcal H^{d-1}$-rectifiable in $\Omega$;
\item[(ii)] $E \subset \Om$ is open, with  finite perimeter in $\Om$, and with $\Om \cap {\rm cl}(\pa^* E) = \Om \cap \pa E \subset K$;
\item[(iii)] $K\cup E$ is $\mathcal C$-spanning $W$.  
\end{itemize}

\begin{remark}
We notice that the class $\mathcal A (W, \mathcal C)$ differs from the class $\KK (W, \mathcal C)$ from \cite{king2022plateau} in the requirement that the $\mathcal C$-spanning condition is imposed to the set $K \cup E$, rather than to $K$ alone. Moreover, for couples $(K,E)$ satisfying (i) and (ii) above, we have that $K\cup E$ is $\mathcal C$-spanning if and only if $K\cup E^{(1)}$ is $\mathcal C$-spanning. Indeed, any point $x \in E^{(1)}\Delta E$ must belong to $\pa E$, so $\Omega\cap \big(E^{(1)}\Delta E\big)\subset  \Omega\cap\pa E \subset K$. We observe that the spanning condition from \cite{MaggiNovackRestrepo} is formulated in measure theoretic sense and it is imposed on $K\cup E^{(1)}$; on the other hand, the regularity theorems from \cite{MaggiNovackRestrepo} allow us to work directly with couples of sets $(K,E)$ satisfying (i) and (ii) above, for which the spanning condition (iii) is equivalent and less technical to handle.
\end{remark}

\begin{proposition}\label{prop:esistenza_FF}
Let $W$ and $\mathcal C$ be as above. Then, for every $\Lambda>0$, there is $\eps_0>0$ such that for every $\eps<\eps_0$, there exists an admissible pair $(K,E)\in\mathcal A(W,\mathcal C)$ such that ${\rm Vol}(E)>0$ and the following holds.
\begin{enumerate}[\rm (i)]
\item  Setting 
\begin{align*}
    \widehat{\mathcal F}(K,E):=\mathcal H^{d-1}(\pa^*E \cap \Omega) + 2\, \mathcal H^{d-1}(K \cap E^{(0)})
    \end{align*}
    and
    \begin{align*}
    \widehat{\mathcal F}_{\Lambda,\eps}(K,E):=\widehat{\mathcal F}(K,E)+\Lambda\big|\text{\rm Vol}\,(E)-\eps\big|\,,
\end{align*}
we have that    
\begin{equation}\label{e:minimality-final}
        \widehat{\mathcal F}_{\Lambda,\eps}(K,E) \leq \inf\Big\{\mathcal H^{d-1}(\Omega\cap \partial^\ast E) +\Lambda|{\rm Vol}(E)-\eps|\ :\ E\in \mathcal E(W,\mathcal C)\Big\}\,.
    \end{equation}
    \item For all $\eps\leq \eps_0$ the following estimate holds
    \begin{equation}
        \label{e:stima_disco}
        \widehat{\mathcal F}_{\Lambda,\eps}(K,E)\le 2\H^{d-1}(D_0)+C_d\eps^2,
    \end{equation}
    where $D_0 \subset \bp$ is the bounded connected component of $\bp\setminus W$ and $C_d>0$ is a dimensional constant.
\item For every diffeomorphism $\Phi:\Omega\to\Omega$ with compact support in $\Omega$, it holds:
\begin{equation}\label{e:minimality-diffeo}
        \widehat{\mathcal F}_{\Lambda,\eps}(K,E) \leq \widehat{\mathcal F}_{\Lambda,\eps}(\Phi(K),\Phi(E))\,.
    \end{equation}
\end{enumerate}
\end{proposition}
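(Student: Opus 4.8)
The plan is to obtain $(K,E)$ by the direct method applied to the penalized energy $\widehat{\mathcal F}_{\Lambda,\eps}$, using as black boxes the compactness, lower semicontinuity and structure theory for the relaxed (measure-theoretic) homotopic spanning problem developed in \cite{MaggiNovackRestrepo}. Concretely, one takes a minimizing sequence $(K_j,E_j)$ for $\inf\{\widehat{\mathcal F}_{\Lambda,\eps}(K,E)\ :\ K\cup E^{(1)}\in\mathcal S(W,\mathcal C)\}$, spanning understood in the sense of \cite{MaggiNovackRestrepo}; the competitor constructed below bounds the infimum, so $\H^{d-1}(\partial^*E_j\cap\Omega)$, $\H^{d-1}(K_j\cap E_j^{(0)})$ and ${\rm Vol}(E_j)$ are uniformly bounded. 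The compactness and lower semicontinuity of \cite{MaggiNovackRestrepo} — to which the extra term $\Lambda|{\rm Vol}(\cdot)-\eps|$ adds no difficulty, being continuous under $L^1$ convergence of sets of equibounded volume — produce a limit pair realizing the infimum; and since such a minimizer is in particular a minimizer of $\widehat{\mathcal F}$ at its own volume, the structure part of \cite[Theorem~1.6]{MaggiNovackRestrepo} yields the good representative with $E$ open, $K$ relatively closed and $(d-1)$-rectifiable, $\Omega\cap{\rm cl}(\partial^*E)=\Omega\cap\partial E\subset K$ and $K\cap E^{(1)}=\emptyset$, so that $(K,E)\in\mathcal A(W,\mathcal C)$.

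Granting such a minimizer, items (i) and (iii) follow from minimality once one observes that the relevant competitors remain admissible. For (iii), if $\Phi\colon\Omega\to\Omega$ is a compactly supported diffeomorphism then $(\Phi(K),\Phi(E))$ is again admissible: the only nonobvious point, the spanning condition, holds because $\Phi$ induces an automorphism of $\pi_1(\Omega)$, whence $\gamma\in\mathcal C\iff\Phi^{-1}(\gamma)\in\mathcal C$ and $\Phi(K\cup E)\cap\gamma\neq\emptyset\iff(K\cup E)\cap\Phi^{-1}(\gamma)\neq\emptyset$; minimality then gives \eqref{e:minimality-diffeo}. For (i), any $E'\in\mathcal E(W,\mathcal C)$ with $\H^{d-1}(\Omega\cap\partial E')<\infty$ furnishes the admissible pair $(\Omega\cap\partial E',E')$, for which $(\Omega\cap\partial E')\cap(E')^{(0)}=\emptyset$ (a set with smooth boundary has $\partial E'=\partial^*E'$), so $\widehat{\mathcal F}_{\Lambda,\eps}(\Omega\cap\partial E',E')=\H^{d-1}(\Omega\cap\partial E')+\Lambda|{\rm Vol}(E')-\eps|$; minimality and taking the infimum over $E'$ give \eqref{e:minimality-final}.

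It thus remains to build the competitor and to prove ${\rm Vol}(E)>0$. Fix $\psi\in C^\infty(\overline{D_0})$ with $\psi>0$ in $D_0$, $\psi\equiv 0$ on $\partial D_0$, and $\int_{D_0}\psi\,dx'=\tfrac12$, and set $E':=\{x'+t\,e_d:x'\in D_0,\ |t|<\eps\,\psi(x')\}$. Since every $x'\in D_0$ has ${\rm dist}(x',\Gamma)>\delta$ and $\Gamma\subset\bp$, one has ${\rm dist}(x'+t e_d,\Gamma)\ge{\rm dist}(x',\Gamma)>\delta$, so $E'\subset\Omega$; moreover $E'$ is open, ${\rm Vol}(E')=2\eps\int_{D_0}\psi=\eps$, and $\Omega\cap\partial E'$ is the disjoint union of the two smooth graphs $\{x'\pm\eps\psi(x')e_d:x'\in D_0\}$, because the would-be ``edge'' $\{(x',0):x'\in\partial D_0\}$ sits on $\partial W$, hence outside $\Omega$. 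That $\Omega\cap\partial E'$ is $\mathcal C$-spanning follows from a linking argument: every $\gamma\in\mathcal C$ meets $D_0\subset E'$ (as $D_0$ is itself $\mathcal C$-spanning, see \cref{rema:D0}), while $\gamma$ cannot be contained in $\overline{E'}\cap\Omega$ — this set deformation retracts onto $D_0$, and every loop in $D_0$ has zero linking number with $\Gamma$, hence is trivial in $\pi_1(\Omega)$, whereas $\gamma$ is not — so $\gamma$ must cross $\Omega\cap\partial E'$. Hence $E'\in\mathcal E(W,\mathcal C)$, and using $\sqrt{1+a}\le 1+a/2$,
\[
\widehat{\mathcal F}_{\Lambda,\eps}(K,E)\ \le\ \H^{d-1}(\Omega\cap\partial E')\ =\ 2\int_{D_0}\sqrt{1+\eps^2|\nabla\psi|^2}\,dx'\ \le\ 2\H^{d-1}(D_0)+C_d\,\eps^2 ,
\]
with $C_d=\int_{D_0}|\nabla\psi|^2$, which is (ii). Finally, if ${\rm Vol}(E)=0$ then $E=\emptyset$, so $K$ alone is $\mathcal C$-spanning and $\H^{d-1}(K)\ge\H^{d-1}(D_0)$ by \cref{rema:D0}; thus $\widehat{\mathcal F}_{\Lambda,\eps}(K,\emptyset)=2\H^{d-1}(K)+\Lambda\eps\ge 2\H^{d-1}(D_0)+\Lambda\eps$, which for $\eps_0$ small (say $\eps_0<\Lambda/C_d$) contradicts (ii). Therefore ${\rm Vol}(E)>0$.

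\textbf{Main obstacle.} The analytic core — compactness, lower semicontinuity, and the structure of minimizers of the relaxed energy — is imported wholesale from \cite{MaggiNovackRestrepo}; the genuinely new verifications are that the thin symmetric lens $E'$ lies in $\mathcal E(W,\mathcal C)$ (in particular that $\Omega\cap\partial E'$ is \emph{smooth in $\Omega$}, which works precisely because the singular ``edge'' of $E'$ is absorbed into $W$, and that it is homotopically spanning, via the linking/retraction argument), and — crucially — that its area excess over $2\H^{d-1}(D_0)$ is \emph{quadratic} in $\eps$: this quadratic gain against the linear penalty $\Lambda\eps$ is exactly what forces ${\rm Vol}(E)>0$ for small $\eps$, and it is the quantitative fact on which the rest of the paper rests.
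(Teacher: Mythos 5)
Your items (i) and (iii), the thin-lens competitor over $D_0$ (smoothness of its boundary in $\Omega$ because the edge is absorbed into $W$, spanning via the linking/retraction argument, quadratic area excess), and the final comparison of the linear penalty $\Lambda\eps$ against the quadratic bound \eqref{e:stima_disco} to force ${\rm Vol}(E)>0$ are all sound and match the paper's scheme (the paper uses the slab $S_\eps$ instead of your lens, to the same effect; note only that your constant $C_d=\int_{D_0}|\nabla\psi|^2$ is not purely dimensional, but the paper's competitor has the same feature, so this is cosmetic).

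The genuine gap is in the existence step. You run the direct method on $\widehat{\mathcal F}_{\Lambda,\eps}$ and import ``compactness and lower semicontinuity'' from \cite{MaggiNovackRestrepo}, asserting that the penalization is harmless because it is ``continuous under $L^1$ convergence of sets of equibounded volume''. But the compactness you actually get from bounded perimeter and bounded volume is only $L^1_{\loc}$ subconvergence, under which ${\rm Vol}$ is merely \emph{lower} semicontinuous: since $\Omega$ is unbounded and nothing confines a minimizing sequence of the \emph{penalized} problem a priori, volume can escape to infinity, in which case $|{\rm Vol}(E)-\eps|$ can jump \emph{up} in the limit and $\widehat{\mathcal F}_{\Lambda,\eps}$ fails to be lower semicontinuous along the sequence; moreover, the existence/compactness theorems of \cite{MaggiNovackRestrepo} are stated for the volume-constrained problem, so they cannot be cited verbatim for the penalized functional. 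A confinement or truncation argument (or density estimates along an almost-minimizing sequence) is needed and is missing. This is precisely what the paper's proof is built to avoid: it never minimizes $\widehat{\mathcal F}_{\Lambda,\eps}$ by a limit procedure, but records the limit $\bar t$ of the volumes ${\rm Vol}(E_j)$ of a minimizing sequence, rules out $\bar t=0$ by comparing the upper bound $2\H^{d-1}(D_0)+C_d\eps^2$ with the fact (from \cite[Theorem 1.5]{MaggiNovackRestrepo}) that constrained minimizers at volumes $t_j\to0$ have energies converging to $2\H^{d-1}(D_0)$, hence penalized energies tending to $2\H^{d-1}(D_0)+\Lambda\eps$, and then takes as $(K,E)$ the constrained minimizer at volume $\bar t>0$ furnished by that theorem. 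Either add the missing confinement step to your direct-method route, or reroute the existence through the constrained theorem as the paper does; a smaller point in the same vein is that invoking the structure part of \cite[Theorem 1.6]{MaggiNovackRestrepo} ``at its own volume'' tacitly requires that volume to be positive, so the positivity of ${\rm Vol}(E)$ cannot be postponed entirely to the end if you rely on that structure (your contradiction argument can, however, be run at the relaxed level, where $\H^{d-1}(K)\ge\H^{d-1}(D_0)$ still holds by the equivalence of the relaxed and classical spanning values).
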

\begin{proof}
Fix $W$, $\mathcal C$, and $\Lambda$, and let $\varepsilon$ be sufficiently small. Let $\{(K_j,E_j)\}_{j \geq 1}$ be a sequence in $\mathcal A (W,\mathcal C)$ such that 
$$\lim_{j\to\infty}\widehat{\mathcal F}_{\Lambda,\varepsilon}(K_j,E_j) =\inf\Big\{\widehat{\mathcal F}_{\Lambda, \varepsilon}(K,E) \, \colon \, (K,E)\in \mathcal A(W,\mathcal C)\Big\}\,.$$
Set $t_j := {\rm Vol}(E_j)$, and let $\bar t$ be any subsequential limit of $\{t_j\}$. We claim that it must be $\bar t > 0$. Define $S_{\eps}$ to be the open and bounded connected component of the set
\begin{equation*}
\Omega\cap\{-\sigma(\eps)<x_d<\sigma(\eps)\}\,,
\end{equation*}
which contains the disc $D_0$, the constant $\sigma(\eps)>0$ being chosen in such a way that ${\rm Vol}(S_\eps)=\eps$. Since $S_\e \in \mathcal E(W, \mathcal C)$ for $\e$ sufficiently small (depending on $W$), we can use the pair $(\pa S_\eps \cap \Omega , S_\eps)$ as a competitor for $\widehat{\mathcal F}_{\Lambda, \e}$ on $\mathcal A (W,\mathcal C)$.
Moreover, since $(K_j,E_j)$ is a minimizing sequence, up to extracting a subsequence, we have that for $j$ large enough
\begin{equation} \label{e:strip-upper}
 \widehat{\mathcal F}_{\Lambda, \eps}(K_j, E_j) = \widehat{\mathcal F}(K_j,E_j)+\Lambda|{\rm Vol}(E_j)-\eps|\le\H^{d-1}(\Omega\cap\partial^\ast S_\eps) \leq2\H^{d-1}(D_0)+C_d\eps^2\,,
\end{equation}
where $C_d$ is a dimensional constant. Assume by contradiction that $\bar t = 0$. By \cite[Theorem 1.5]{MaggiNovackRestrepo}, for every $t>0$ there exists a pair $(K_t, E_t) \in \mathcal{A}(W,\mathcal C)$ such that 
$$
\widehat{\mathcal F}(K_t, E_t)=\inf\Big\{\widehat{\mathcal F}(K,E)\, \colon \, (K,E)\in \mathcal A(W,\mathcal C)\,, \,  {\rm Vol}(E)=t\Big\} \,.
$$
Corresponding to the choice $t=t_j$, we get then pairs $(K_{t_j}, E_{t_j})$ for which $\widehat{\mathcal F}(K_{t_j}, E_{t_j}) \leq \widehat{\mathcal F}(K_j, E_j)$. Furthermore, we have that 
$$\H^{d-1} \text{\Large$\llcorner$} (\Om \cap \pa^*E_{t_j}) + 2\, \H^{d-1} \text{\Large$\llcorner$} (K_{t_j} \cap E_{t_j}^{(0)})\weakstar\,2\,\H^{d-1}\text{\Large$\llcorner$} D_0 \qquad \hbox{as}\quad j \to \infty\,,$$
and in fact it holds that
\begin{equation} \label{e:strip-lower}
\exists\,\lim_{j\to+\infty}\widehat{\mathcal F}(K_{t_j}, E_{t_j})+\Lambda|{\rm Vol}(E_{t_j})-\eps|=\lim_{j\to +\infty}\widehat{\mathcal F}(K_{t_j}, E_{t_j})+\Lambda|t_j-\eps|=2\,\H^{d-1}(D_0)+\Lambda\eps\,,
\end{equation}
which together with \eqref{e:strip-upper} implies 
\begin{equation} \label{contr_concl}
2\,\H^{d-1}(D_0)+\Lambda\eps\le 2\H^{d-1}(D_0)+C_d\eps^2\,.
\end{equation}
For $\eps$ small enough, \eqref{contr_concl} is violated, and this proves that indeed it is $\bar t > 0$.  

Now, applying again \cite[Theorem 1.5]{MaggiNovackRestrepo}, there is an admissible couple $(K,E)\in\mathcal A(W,\mathcal C)$ such that 
$${\rm Vol}(E)= \bar t$$
and such that 
\begin{equation*}
        \widehat{\mathcal F}_{\Lambda,\eps}(K,E) =\widehat{\mathcal F}(K,E)+\Lambda|{\rm Vol}(E)-\eps|\le \lim_{j\to+\infty}\widehat{\mathcal F}(K_j,E_j)+\Lambda|{\rm Vol}(E_j)-\eps|\,,
\end{equation*}
implying that \eqref{e:minimality-final}, \eqref{e:stima_disco} and \eqref{e:minimality-diffeo} hold.
\end{proof}

\begin{remark}
    We notice that the inequality in \eqref{e:minimality-final} is in fact an equality, as a consequence of \cite[Theorem 2.4]{Novack_2024}. Moreover, by the regularity result in \cite[Theorem 1.6]{MaggiNovackRestrepo}, the set $(K,E)$ of \cref{prop:esistenza_FF} may be chosen so that $K$ is the disjoint union of the sets $\Om \cap \pa^* E$, $\Omega \cap (\pa E \setminus \pa^* E)$, and $K \setminus \pa E$. In particular, this implies:
    \begin{equation} \label{e:relaxation_bd}
    \widehat{\mathcal F}(K,E) = \mathcal F (K,E) := \H^{d-1}(\Om \cap \pa^* E) + 2\, \H^{d-1}(K \setminus \pa^*E)\,.
    \end{equation}
    In what follows, we will then work with the functional $\mathcal F$ and with the corresponding penalized functional
    \begin{equation} \label{e:pen_relaxation_bd}
        \FF (K,E) := \F(K,E) + \Lambda \, |{\rm Vol}(E) - \varepsilon|\,.
    \end{equation}
\end{remark}

\begin{lemma}
\label{lemma:stime_curvature}
Let $\Lambda>0$, $\eps>0$ and $(K,E)$ be a minimizer of $\FF$ in the class $\mathcal{A}(W, \C)$. Then there exists $\lambda\in\R$ such that
  \begin{equation}
    \label{e:stationary-main}
    \l\,\int_{\pa^*E}X\cdot\nu_E\,d\H^{d-1}=\int_{\pa^*E}\Div^K\,X\,d\H^{d-1}+2\,\int_{K\setminus\pa^*E}\Div^K\,X\,d\H^{d-1}\,,
  \end{equation}
  for every $X\in C^1_c(\R^{d};\R^{d})$ with $X\cdot\nu_\Om=0$ on $\pa\Om$, where $\Div^K$ denotes tangential divergence along the rectifiable set $K$; $\nu_E$ is the outer unit normal to $\partial^\ast E$; $\nu_\Omega$ is the outer unit normal to $\partial\Omega$. 
Moreover:
\begin{enumerate}[\rm (1)]
\item if $|E|>\eps$, then $\lambda+\Lambda=0$;
\item if $|E|<\eps$, then $\lambda-\Lambda=0$;
\item if $|E|=\eps$, then $-\Lambda\le \lambda\le\Lambda$.
\end{enumerate}
\end{lemma}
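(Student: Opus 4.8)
The plan is a Lagrange--multiplier argument based on inner variations, treating separately the three regimes for $|E|$, since the penalization $\Lambda\,|{\rm Vol}(E)-\eps|$ is smooth away from the set $\{{\rm Vol}(E)=\eps\}$ but only Lipschitz there. First I would fix a vector field $X\in C^1_c(\R^d;\R^d)$ with $X\cdot\nu_\Omega=0$ on $\partial\Omega$ and let $\{\Phi_t\}_{|t|<t_0}$ be its flow. Since $X$ is compactly supported and tangent to $\partial\Omega=\partial W$, the hypersurface $\partial\Omega$ is invariant under the flow and each $\Phi_t$ restricts to a compactly supported diffeomorphism of $\Omega$; as the $\mathcal C$-spanning condition is diffeomorphism invariant and the admissibility properties (i)--(ii) are preserved, $(\Phi_t(K),\Phi_t(E))\in\mathcal A(W,\mathcal C)$, so by minimality of $(K,E)$ the function $g(t):=\FF(\Phi_t(K),\Phi_t(E))$ has an interior minimum at $t=0$. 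Using $\Phi_t(\pa^*E)=\pa^*(\Phi_t(E))$ and $\Phi_t(K\setminus\pa^*E)=\Phi_t(K)\setminus\pa^*(\Phi_t(E))$, I would record the classical first variation formulas
\[
A[X]:=\frac{d}{dt}\Big|_{t=0}\mathcal F(\Phi_t(K),\Phi_t(E))=\int_{\pa^*E}\Div^K\,X\,d\H^{d-1}+2\int_{K\setminus\pa^*E}\Div^K\,X\,d\H^{d-1},
\]
\[
V[X]:=\frac{d}{dt}\Big|_{t=0}{\rm Vol}(\Phi_t(E))=\int_E\Div X\,dx=\int_{\pa^*E}X\cdot\nu_E\,d\H^{d-1},
\]
where in $A[X]$ one uses that the approximate tangent planes of $\pa^*E$ and of $K$ agree $\H^{d-1}$-a.e.\ on $\pa^*E$, and in $V[X]$ the Gauss--Green formula together with $X\cdot\nu_E=X\cdot\nu_\Omega=0$ on $\pa^*E\cap\partial W$ (recall that $E\subset\Omega$ lies on one side of $\partial W$).

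Next I would split into cases according to the sign of ${\rm Vol}(E)-\eps$. If $|E|>\eps$, then by continuity $v(t):={\rm Vol}(\Phi_t(E))>\eps$ for $|t|$ small, so near $0$ one has $g(t)=\mathcal F(\Phi_t(K),\Phi_t(E))+\Lambda\,(v(t)-\eps)$, which is differentiable; the interior minimum forces $g'(0)=0$, i.e.\ $A[X]+\Lambda\,V[X]=0$ for every admissible $X$. This is exactly \eqref{e:stationary-main} with $\lambda=-\Lambda$, proving (1). The case $|E|<\eps$ is symmetric, with $v(t)<\eps$ near $0$ and $g(t)=\mathcal F(\Phi_t(K),\Phi_t(E))+\Lambda\,(\eps-v(t))$, giving $A[X]-\Lambda\,V[X]=0$, i.e.\ \eqref{e:stationary-main} with $\lambda=\Lambda$, proving (2).

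The remaining case $|E|=\eps$ is the delicate one, since $g$ is only Lipschitz at $0$. Here $v$ is $C^1$ with $v(0)=\eps$, so $|v(t)-\eps|=|V[X]|\,|t|+o(t)$ and hence $g(t)=g(0)+A[X]\,t+\Lambda\,|V[X]|\,|t|+o(t)$. Since $t=0$ is an interior minimum, the right derivative of $g$ is $\ge 0$ and the left derivative is $\le 0$; combined, these read
\[
|A[X]|\le\Lambda\,|V[X]|\qquad\text{for every admissible }X.
\]
In particular $V[X]=0$ implies $A[X]=0$, so the linear functional $X\mapsto A[X]$ is a scalar multiple of $X\mapsto V[X]$: if $V\equiv 0$ take $\lambda:=0$; otherwise pick $X_0$ with $V[X_0]\ne 0$, set $\lambda:=A[X_0]/V[X_0]$, and for an arbitrary admissible $X$ apply the displayed bound to $X-\tfrac{V[X]}{V[X_0]}X_0\in\ker V$ to obtain $A[X]=\lambda\,V[X]$. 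This yields \eqref{e:stationary-main}, while evaluating the bound at $X_0$ gives $|\lambda|\le\Lambda$, proving (3).

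I expect the only genuine obstacle to be this last case: away from the constraint the argument is the textbook computation of the first variation of (weighted) area and of volume under inner variations, which applies verbatim here because the two strata $\pa^*E$ and $K\setminus\pa^*E$ of the weighted area measure transform consistently under the flow $\Phi_t$. At $|E|=\eps$, however, one cannot simply annihilate a derivative and must instead combine the two one-sided inequalities into the a priori estimate $|A[X]|\le\Lambda|V[X]|$ and then extract the Lagrange multiplier through the kernel-inclusion argument, which simultaneously delivers the bound $-\Lambda\le\lambda\le\Lambda$.
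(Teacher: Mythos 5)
Your proof is correct, but it is genuinely more self-contained than the paper's. The paper obtains the stationarity identity \eqref{e:stationary-main} --- i.e. the very existence of the multiplier $\lambda$ --- by citing the first-variation results of \cite{MaggiNovackRestrepo} (Theorem 1.5) and \cite{king2022plateau} (Theorem 1.6), and then disposes of (1)--(3) in one line each: when $|E|>\eps$ (resp. $|E|<\eps$) the pair minimizes $\F+\Lambda\,{\rm Vol}$ (resp. $\F-\Lambda\,{\rm Vol}$), so $\lambda=\mp\Lambda$, while for $|E|=\eps$ it compares the variation $\lambda\int_{\pa^*E}X\cdot\nu_E\,d\H^{d-1}$ of $\F$ with the variation of the penalization term, which is at most $\Lambda\big|\int_{\pa^*E}X\cdot\nu_E\,d\H^{d-1}\big|$, giving $|\lambda|\le\Lambda$. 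You instead rederive the whole statement from inner variations: flows of fields tangent to $\pa\Omega$ preserve $\Omega$, $W$ and the class $\mathcal A(W,\C)$, the weighted area and the volume are differentiable along the flow, the regimes $|E|\gtrless\eps$ give $\lambda=\mp\Lambda$ directly, and in the only delicate regime $|E|=\eps$ the two one-sided derivative inequalities yield $|A[X]|\le\Lambda|V[X]|$, from which you extract $\lambda$ by the kernel-inclusion argument together with the bound $|\lambda|\le\Lambda$. What the citation buys the paper is not having to verify the first-variation formula for the relaxed energy (in particular the multiplicity-two stratum $K\setminus\pa^*E$ and the absence of boundary terms for tangential fields); what your route buys is independence from those external theorems and a simultaneous proof of existence of $\lambda$ and of the bound in case (3), which in the paper only comes after the identity is granted. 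Two small imprecisions to fix, neither a real gap: the flow maps are not ``compactly supported diffeomorphisms of $\Omega$'' (they need not equal the identity near $\pa\Omega$) --- what you use, and what is true, is that they are diffeomorphisms of $\R^d$ equal to the identity outside a compact set, preserving $\pa\Omega$ and hence $\Omega$ and $W$, and isotopic to the identity, so that $(\Phi_t(K),\Phi_t(E))\in\mathcal A(W,\C)$; and since $X$ is only $C^1$ the loops $\Phi_{-t}\circ\gamma$ are only $C^1$ embeddings, whereas $\C$ consists of smooth loops --- harmless because $K\cup E$ is relatively closed in $\Omega$, so a $C^1$ loop avoiding it can be smoothed into an element of $\C$ still avoiding it (alternatively, prove \eqref{e:stationary-main} for smooth $X$ and conclude by approximation in $C^1$).
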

\begin{proof}
The identity \eqref{e:stationary-main} follows from \cite[Theorem 1.5]{MaggiNovackRestrepo} or \cite[Theorem 1.6]{king2022plateau}. The claims (1) and (2) follow since $(K,E)$ minimizes $\mathcal F(K,E)+\Lambda{\rm Vol}(E)$ (in the case (1)) and $\mathcal F(K,E)-\Lambda{\rm Vol}(E)$ (in the case (2)). Lastly, (3) follows by comparing the variation of $\mathcal F$ in the direction of $X$, which is $\l\!\int_{\pa^*E}X\cdot\nu_E\,d\H^{d-1}$, with the volume variation of the term $\Lambda|{\rm Vol}(E)-\eps|$ in the direction $X$, which is at most $\Lambda\big|\!\int_{\pa^*E}X\cdot\nu_E\,d\H^{d-1}\big|$. 
\end{proof}

\noindent We introduce next the $(d-1)$-dimensional integral varifold $V:= {\bf var}(K, \vartheta)$ with multiplicity function $\theta$ given by
  \[
      \theta (x) =
      \begin{cases}
      1 & \mbox{if $x \in \Om \cap \pa^* E$}, \\
      2 & \mbox{if $x \in K \setminus \pa^* E$}\,.
  \end{cases}
  \]
With this definition, the first variation formula \eqref{e:stationary-main} can be written as
\begin{equation}
\label{stationary_varifold}
   \delta V (X) = \int_{\R^{d} \times {\rm G}(d-1,d)} \Div_{S}\,X\,dV(x, S) = \int_{\R^{d}} H \cdot X \,d \norm{V} 
\end{equation}
for all vector fields $X \in C^1_c(\R^{d}; \R^{d})$ with $X \cdot \nu_{\Omega} = 0$, where: 
\begin{itemize}
\item ${\rm G}(d-1,d)$ denotes the Grassmannian of (unoriented) $(d-1)$-dimensional linear subspaces in $\R^d$; 
\item $\|V\|:=\H^{d-1}\text{\Large$\llcorner$}(\partial^\ast E)+2\,\H^{d-1}\text{\Large$\llcorner$}(K\setminus\partial^\ast E)$; 
\item the \textit{generalized mean curvature vector} $H$ of $V$ is defined by
\begin{equation}
\label{eq:H-curvatura}
    H(x) = 
\begin{cases}
    \lambda\, \nu_{E}(x) &\mbox{if $x \in \Om \cap \pa^*E$}\,,\\
    0 & \mbox{if $x \in K \setminus \pa^*E$}\,,
\end{cases}
\end{equation}
where $\nu_E$ is the outer unit normal to $\partial^\ast E$. 
\end{itemize}

Hence, by \eqref{stationary_varifold} and \eqref{eq:H-curvatura}, $V$ is an integral varifold with $L^\infty$-bounded generalized mean curvature in $\Om$. Furthermore, following \cite{kagayatone}, $V$ has contact angle $\sfrac{\pi}{2}$ at the boundary ${\rm cl}(K) \cap \pa \Om$ in the sense of distributions, where ${\rm cl}(\cdot)$ denotes the closure in $\R^d$ of the considered set. The following lemma is an immediate consequence of this observation. 

\begin{lemma}\label{l:density-estimate}
Let $\Lambda>0$ and $\eps_0>0$ be as in \cref{prop:esistenza_FF}. Let $\eps\in(0,\eps_0)$ and $(K,E)$ be a minimizer of $\FF$ in the class $\mathcal{A}(W, \C)$. Then, there are $r_0>0$ and $C>0$, depending on $d$ the dimension, $W$ and $\Lambda$ such that
$$
\H^{d-1}(\pa^\ast E \cap B_r(x_0)) + 2 \H^{d-1}\left((K \setminus \pa^\ast E) \cap B_r(x_0)\right)\ge C r^{d-1},
$$
for every $r<r_0$ and every $x_0\in {\rm cl}(K)$. In particular, the set ${\rm cl} \,(K)\cup {\rm cl}\,(E)$ is bounded. 
\end{lemma}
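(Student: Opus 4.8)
I would read this as a uniform density lower bound for the integral varifold $V={\bf var}(K,\theta)$ just introduced, and prove it from the two facts recorded above: by \cref{lemma:stime_curvature} one has $|\lambda|\le\Lambda$ in all three cases, so $V$ has generalized mean curvature bounded in $L^\infty(\|V\|)$ by $\Lambda$; and, following \cite{kagayatone}, $V$ meets $\pa\Omega=\pa W$ at contact angle $\sfrac{\pi}{2}$ in the distributional sense. The whole argument will then be an application of the monotonicity formula for varifolds with bounded generalized mean curvature — used directly at interior points, and after a local reflection across $\pa\Omega$ at boundary points. Up to discarding from $K$ an $\H^{d-1}$-negligible subset (which modifies neither $\FF$ nor the spanning condition, and preserves the requirements defining $\mathcal A(W,\mathcal C)$) I would also assume $\cl(K)=\spt\|V\|$, so that the only thing to quantify is $\Theta^{d-1}(\|V\|,x_0)>0$ for $x_0\in\cl(K)$.

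For \emph{interior points}, fix the threshold $r_0>0$ (to be chosen at the end) and let $x_0\in\cl(K)$ with $\dist(x_0,\pa\Omega)\ge r_0$, so that $B_{r_0}(x_0)\subset\Omega$. The monotonicity formula then makes $r\mapsto e^{\Lambda r}\,\|V\|(B_r(x_0))\,r^{1-d}$ nondecreasing on $(0,r_0)$, and letting $r\to0^+$ yields
$$
\|V\|(B_r(x_0))\ \ge\ c_d\,e^{-\Lambda r_0}\,\Theta^{d-1}(\|V\|,x_0)\,r^{d-1}\qquad\text{for every }r<r_0,
$$
with $c_d>0$ dimensional. Since $V$ is integral with bounded generalized mean curvature, its density $\Theta^{d-1}(\|V\|,\cdot)$ is upper semicontinuous and $\ge1$ at $\|V\|$-a.e.\ point, hence $\ge1$ everywhere on $\spt\|V\|=\cl(K)$; this gives the desired bound at all such $x_0$, with $C=c_d\,e^{-\Lambda r_0}$.

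For \emph{boundary points}, I would use that $\pa W$ is a compact $C^2$ hypersurface to fix $r_0>0$, depending only on $W$, so that for each $y_0\in\pa\Omega$ the piece $\pa\Omega\cap B_{4r_0}(y_0)$ is flattened by a $C^2$ diffeomorphism $\Psi$ of $B_{4r_0}(y_0)$ with $\|\Psi\|_{C^2}+\|\Psi^{-1}\|_{C^2}\le C(W)$. Given $x_0\in\cl(K)$ with $\dist(x_0,\pa\Omega)<r_0$, I would pick a nearest point $y_0\in\pa\Omega$, push $V$ forward by $\Psi$, reflect the image across the hyperplane $\Psi(\pa\Omega)$, and pull back, obtaining a varifold $\wi V$ on $B_{2r_0}(y_0)$ with $\|\wi V\|\ge\|V\|$, $x_0\in\spt\|\wi V\|$, and generalized mean curvature bounded by $C(W)(1+\Lambda)$; the $\sfrac{\pi}{2}$ contact angle is precisely what cancels the singular part of the first variation of the reflected varifold on $\pa\Omega$, so that its mean curvature only picks up the (bounded) second fundamental form of $\pa\Omega$ together with $\lambda$. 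Applying the interior estimate to $\wi V$ at $x_0$ and bounding $\|\wi V\|$ on balls centered at $x_0$ by $\|V\|$ on comparable balls (the reflected mass being itself part of $\|V\|$) gives $\|V\|(B_r(x_0))\ge C\,r^{d-1}$ for $r<r_0$, with $C$ now also depending on $W$. Taking $r_0$ small enough that $\Lambda r_0\le1$, both constants depend only on $d$, $W$, $\Lambda$, and the two cases together prove the density estimate. I expect \emph{this} step — carrying out the reflection across the curved boundary $\pa W$ with a uniform mean curvature bound, and extracting $r_0$ uniformly from the compactness of $\pa W$ — to be the main obstacle; the interior monotonicity and the final covering argument are routine.

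For \emph{boundedness}, I would argue by contradiction: if $\cl(K)$ were unbounded, then since $W$ is compact there are $x_j\in K$ with $|x_{j+1}|>|x_j|+2r_0$, the balls $B_{r_0}(x_j)\subset\Omega$ are pairwise disjoint, and each carries $\|V\|$-mass at least $C\,r_0^{d-1}$ by the density estimate, contradicting $\F(K,E)<\infty$ (which holds by \eqref{e:stima_disco}). Hence $\cl(K)$ is bounded; then $\pa E\subset K\cup\pa W$ is bounded, say $\pa E\subset\overline{B_R}$, and since $\R^{d}\setminus\overline{B_R}$ is connected and disjoint from $\pa E$ it lies in $E$ or in $\R^{d}\setminus\cl(E)$ — the former being excluded by ${\rm Vol}(E)<\infty$ — so $\cl(E)\subset\overline{B_R}$ and $\cl(K)\cup\cl(E)$ is bounded.
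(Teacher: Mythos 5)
Your proposal is correct and takes essentially the same route as the paper: the paper obtains the density bound at every point of $\cl(K)$ (boundary points included) from the monotonicity formula for integral varifolds with bounded generalized mean curvature and $\sfrac{\pi}{2}$ contact angle, simply citing \cite[Theorem 4.1]{king_arma}, and then deduces boundedness of $\cl(K)\cup\cl(E)$ from the finiteness of $\F(K,E)$ exactly as you do. The only difference is that you re-derive the boundary monotonicity by hand, via interior monotonicity plus a Gr\"uter--Jost-type reflection across the flattened $\pa W$ using the orthogonal contact angle; this is precisely the content of the cited result (and, like the paper, it tacitly uses the normalization $\cl(K)=\spt\|V\|$ available for the minimizers produced in \cref{prop:esistenza_FF}), so your extra work is sound but could have been replaced by the reference.
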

\begin{proof}
By a standard argument, the first variation identity from \cref{lemma:stime_curvature} implies a monotonicity formula for $\H^{d-1}\text{\Large$\llcorner$}(\partial^\ast E)+2\H^{d-1}\text{\Large$\llcorner$}(K\setminus\partial^\ast E)$; see for instance \cite[Theorem 4.1]{king_arma}. Precisely, there exists $r_0 = r_0(W, d, \Lambda)$ such that $\forall\, r \leq r_0$ and for all $x_0 \in {\rm cl}(K)$, it holds 
\begin{equation}
    \label{eq:denisty_estimate_lemma}
    \frac{\H^{d-1}(\pa^\ast E \cap B_r(x_0)) + 2 \H^{d-1}\left((K \setminus \pa^\ast E) \cap B_r(x_0)\right)}{\omega_{d-1} r^{d-1}} \geq e^{-|\lambda| r}\geq e^{-\Lambda r} \geq C,
\end{equation}
where $C = C(\Lambda,r_0)$. As a consequence, since $\mathcal F(K,E)<+\infty$, there cannot be a sequence of points $x_n\to+\infty$ lying in $K$, so $K$ is bounded; in particular, since $\partial E\subset K$ and ${\rm Vol}(E)<+\infty$, also $E$ is bounded. 
\end{proof}

\section{Mean curvature estimates}
\label{sec:bounds_sulle_curvature}

In this section, we provide lower and upper bounds for the generalized mean curvature $\lambda$ of any minimizer $(K,E)$ of the energy functional $\FF$ defined in \eqref{e:pen_relaxation_bd}.

\subsection{Lower bound on the mean curvature \texorpdfstring{$\lambda$}{lambda}}
\label{sec:lambda_positive}

In this section, we prove that the mean curvature $\lambda$ of a minimizer $(K,E)$ to $\mathcal F_{\Lambda,\eps}$ has to be strictly positive (see \cref{thm:lambda>0}) and, as a consequence, that $|E|\leq\eps$. We will prove this by a comparison viscosity argument that relies on the fact that the fixed boundary $\pa W$ is positively curved along the normal direction to the solution $D_0 \subset \bp$ to the homotopic Plateau problem.

\medskip

We first recall the following definitions.

\begin{definition}
We define the {\em upper and lower height functions} $\phi_{+}$ and $\phi_{-}$ for $K$ to be the maps
$$
\begin{aligned}
 z \in \bp\, \mapsto \, &\phi^{+}(z) := \sup\left\{t: (z,t) \in K\right\},\\
 z \in \bp\,\mapsto\, &\phi^{-}(z) := \inf\left\{t: (z,t) \in K\right\},
\end{aligned}
$$
and we set $\phi_{+}(z) = -\infty$ and $\phi_{-}(z) = + \infty$ if $K \cap (\{z\} \times \R) = \emptyset$. 
\end{definition}

\begin{remark}
Since $K \cup E$ is $\mathcal C$-spanning $W$, every point $z \in D_0$ is such that ${\bf p}^{-1}(\{z\}) \cap (K \cup E) \neq \emptyset$ (here, ${\bf p}$ denotes orthogonal projection onto $\bp$). In fact, since $K \supset \Om \cap \pa E$, one immediately sees that ${\bf p}^{-1}(\{z\}) \cap K \neq \emptyset$ for every $z \in D_0$. In particular, ${\bf p}(K)$ contains $D_0$; notice that, by definition, for every $z\in {\bf p}(K)$ one has $-\infty < \phi^-(z) \leq \phi^+ (z) < \infty$.
\end{remark}

\begin{definition}\label{def:touching}
We say that the graph of a function $\zeta: \bp \to \R$ touches $K$ from above (resp. from below) at a point $x_0\in{\rm cl}(K)$ if there exists a neighborhood $U'$ in $\bp$ of the projection $z_0:={\bf p}(x_0)$ such that $\zeta(z_0) = \phi^+(z_0)$ (resp. $\zeta(z_0) = \phi^-(z_0)$) and $\zeta(z) \ge \phi^+(z)$ (resp. $\zeta(z) \le \phi^-(z)$) for every $z \in U'\cap {\bf p}(K)$. 
\end{definition}

Fundamental ingredients in the proofs of the following lemmas will be two comparison principles: one for touching points on the boundary ${\rm cl}(K) \cap \pa \Omega$ and another one for touching points in the interior $\cl (K) \cap \Omega=K$.

\begin{lemma}[Hopf lemma]
\label{lemma:con_bordo}
Let $(K,E)\in \mathcal{A}(W, \C)$ be a minimizer of $\FF$. 
Let $\zeta:\bp\to\R$ be a smooth function whose graph touches $K$ from above (in the sense of \cref{def:touching}) at a boundary point  
$\overline{x} \in \cl (K) \cap \partial \Omega$. Then, $\nu_\Omega(\overline{x})\cdot\nu_\zeta(\overline{x})\ge 0$, where $\nu_\Omega$ is the exterior normal to $\Omega$ at $\pa\Omega$ and $\nu_\zeta$ is the normal to the graph $\zeta$ pointing upwards.
\end{lemma}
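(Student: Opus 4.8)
The plan is to argue by contradiction: assuming $\nu_\Omega(\overline x)\cdot\nu_\zeta(\overline x)<0$, I will reach a contradiction with the minimality of $(K,E)$ by reflecting the varifold $V$ across $\partial\Omega$ and invoking a strong maximum principle. All considerations are local around $\overline x$. First I would fix, in a small ball $B_r(\overline x)$, Fermi coordinates $y=(y_1,\overline y)\in\R\times\R^{d-1}$ adapted to the smooth hypersurface $\partial\Omega=\partial W$, so that $\partial\Omega=\{y_1=0\}$, $\overline\Omega=\{y_1\ge0\}$ (with $y_1$ increasing into $\Omega$) and $\overline x=0$; then $\graph\zeta$ becomes a smooth hypersurface $\Sigma\ni0$, and the contradiction hypothesis says that the unit normal $\nu_\zeta$ of $\Sigma$ at $0$ has a strictly positive first component, $\nu_\zeta^1>0$. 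Since $V$ meets $\partial\Omega$ orthogonally in the distributional sense (the contact-angle property recalled after \cref{l:density-estimate}, following \cite{kagayatone}), the reflection $\widetilde V:=(V\llcorner B_r)+R_{\#}(V\llcorner B_r)$ across $\partial\Omega$ --- $R$ being the nearest-point reflection, a smooth diffeomorphism fixing $\partial\Omega$ and equal to the Euclidean reflection to first order at $0$ --- is an integral varifold with generalized mean curvature bounded in $L^\infty$ by $\Lambda$ (cf.\ \cref{lemma:stime_curvature}) in the \emph{whole} ball $B_r(0)$, and one has $\spt\widetilde V\cap\Omega=\spt V\cap\Omega$ because $R(\spt V)\subset\overline W$.

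Next I would extract from $\nu_\zeta^1>0$ a confinement of $\spt V$. Writing $\Sigma=\{\langle y,\nu_\zeta\rangle=\psi(y)\}$ with $\psi$ smooth, $\psi(0)=0$, $\nabla\psi(0)=0$, and letting $\nu_\zeta$ point away from $K$, \cref{def:touching} gives that near $0$ the set $\spt V$ lies below $\Sigma$, i.e.\ $\langle y,\nu_\zeta\rangle\le\psi(y)$ on $\spt V\cap B_{r_0}$. Combining this with $\spt V\subset\{y_1\ge0\}$ and $\nu_\zeta^1>0$, and introducing coordinates $\overline y=(z,p)\in\R\times\R^{d-2}$ for which $\{\langle y,\nu_\zeta\rangle=0\}=\{z=-\nu_\zeta^1\,y_1/|\overline\nu_\zeta|\}$ to first order, one gets
\[
\spt V\cap B_{r_0}\subset\{\,z\le C|y|^2-c\,|y_1|\,\}\qquad\text{with }C>0,\ c>0,
\]
and, $R$ preserving this structure up to enlarging $C$, the same inclusion holds with $\widetilde V$ in place of $V$.

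Then I would compare $\widetilde V$ with a saddle-shaped barrier $\Theta_A:=\{z=v_A(y_1,p)\}$, where $v_A(y_1,p):=-A\,y_1^2+C\,|p|^2$ and $A$ is large. The two displayed properties yield: (i) $\spt\widetilde V$ lies on the side $\{z\le v_A\}$ of $\Theta_A$ in a neighbourhood of $0$ and touches $\Theta_A$ at $0$ --- here the term $-c\,|y_1|$, hence the hypothesis, is exactly what lets one absorb $A\,y_1^2$; and (ii) $\Theta_A$ is a strict supersolution of the mean curvature operator near $0$, $\diver\!\big(\nabla v_A/\sqrt{1+|\nabla v_A|^2}\big)\le-\Lambda$, which holds once $A\ge C(d-2)+\Lambda/2$. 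Since $\widetilde V$ has generalized mean curvature bounded by $\Lambda$ and $0$ is now an interior point, Schätzle's strong maximum principle \cite[Theorem~6.2]{schatzle2004quadratic}, applied to the one-sided contact of $\spt\widetilde V$ with the strict supersolution $\Theta_A$ at $0$, forces $\spt\widetilde V$ to contain a full neighbourhood of $0$ in $\Theta_A$.

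This would contradict the confinement. Indeed $\Theta_A$ meets $\partial\Omega=\{y_1=0\}$ transversally at $0$, so it contains points $y^\ast=(y_1^\ast,\,v_A(y_1^\ast,0),\,0)$ with $y_1^\ast>0$ arbitrarily small; by the previous step these lie in $\spt\widetilde V\cap\Omega=\spt V\cap\Omega\subset\spt V$, hence, being below $\Sigma$, satisfy $\langle y^\ast,\nu_\zeta\rangle\le\psi(y^\ast)$. Since $\langle y^\ast,\nu_\zeta\rangle=\nu_\zeta^1\,y_1^\ast+O\big((y_1^\ast)^2\big)$ and $\psi(y^\ast)=O\big((y_1^\ast)^2\big)$, dividing by $y_1^\ast>0$ and letting $y_1^\ast\to0^+$ gives $\nu_\zeta^1\le0$, contradicting $\nu_\zeta^1>0$. (The borderline case in which $\Sigma$ is tangent to $\partial\Omega$ at $\overline x$ is handled by the same scheme with a paraboloidal barrier $\{y_1=-\varepsilon|\overline y|^2\}$.) I expect the two delicate points to be: the reflection step --- proving that $\widetilde V$ really is an integral varifold with $L^\infty$ generalized mean curvature across $\partial\Omega$, which rests on the distributional orthogonality of $V$ there and must handle the ``doubled'' part $K\setminus\partial^\ast E$ of $V$ --- and the exact invocation of Schätzle's \emph{strong} (as opposed to one-sided comparison) maximum principle, for which the strict supersolution in (ii) is tailored.
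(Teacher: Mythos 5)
Your argument is correct in strategy but follows a genuinely different route from the paper. The paper blows up $V$ at the contact point $\overline x$: by the boundary monotonicity formula the rescalings converge to an integral varifold cone $\overline V$ that is stationary in the half-space bounded by $T_{\overline x}(\partial\Omega)$, meets it at angle $\sfrac{\pi}{2}$, and is confined to the wedge formed by $T_{\overline x}(\partial\Omega)$ and $T_\zeta$; if the wedge opening were less than $\sfrac\pi2$, testing the limiting first variation \eqref{stationary_varifold} with a field $Y=(\phi(x_1),0,\dots,0)$, $\phi$ decreasing and tangent to the wall, forces $\overline V$ to be supported in a hyperplane orthogonal to $e_1$, which cannot sit inside the wedge. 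You avoid the blow-up and compactness step altogether by reflecting $V$ across $\partial\Omega$ and running an interior Sch\"atzle comparison against an explicit saddle barrier; what this buys is a direct, quantitative argument at the original scale, at the price of the reflection lemma, which is the one substantive step you assert rather than prove: that $\widetilde V=V\llcorner B_r+R_{\#}(V\llcorner B_r)$ is an integral varifold with $L^\infty$ generalized mean curvature and \emph{no singular first variation on the wall}. This is true and provable from exactly the hypothesis you invoke: for any $X\in C^1_c$, the symmetrized field $\tfrac12\bigl(X+(DR)^{-1}(X\circ R)\bigr)$ is tangent to $\partial\Omega$ on $\partial\Omega$ (since $DR$ at boundary points is precisely the reflection across the tangent plane), so \eqref{stationary_varifold} applies to it, and the non-isometry of the nearest-point reflection only contributes absolutely continuous errors controlled by the second fundamental form of $\partial\Omega$; but as written this lemma is a gap in rigor, whereas the paper's blow-up route needs no such statement. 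Two minor points: in the borderline tangential case your paraboloid parameter must be taken \emph{large} (so that its curvature beats the mean-curvature bound $\Lambda$ of \cref{lemma:stime_curvature}), not small; and since your barrier is a strict supersolution, Sch\"atzle's principle already forbids the one-sided touching outright, so the final "inclusion then violate the confinement" step, while correct, is not needed.
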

\begin{proof}

Suppose by contradiction that $\nu_\Omega(\overline{x})\cdot\nu_\zeta(\overline{x})< 0$ at the boundary point  
$\overline{x} \in \cl (K) \cap \partial \Omega$. We perform a blow-up analysis at the touching point $\overline x$. Upon introducing the function 
\begin{equation*}
    \eta_{\overline x, r} (x):= \frac{x-\overline{x}}{r} \qquad \mbox{for every $r > 0$}\,,
\end{equation*}
we consider the translated and rescaled sets  $\Omega_{\overline{x}, r} := \eta_{\overline x, r}(\Omega)$, $K_{\overline{x}, r} := \eta_{\overline x, r} (K)$, $E_{\overline{x}, r} := \eta_{\overline x, r} (E)$, as well as the varifold $V_{\overline{x}, r} := (\eta_{\overline x, r})_\sharp V$. Then, for every vector field $Y\in C^1_c(\R^d;\R^d)$ with $Y \cdot \nu_{\Omega_{\overline x,r}}=0$, we have from \eqref{stationary_varifold} that 
\begin{equation} \label{e:first-variation-bu}
\int_{\R^d \times {\rm G}(d-1,d)} \Div_{S}\,Y(y)\,dV_{\overline{x}, r}(y, S) = r \int_{\R^d} Y(y) \cdot H(\overline{x} + r y)\,d\norm{V_{\overline{x}, r}},
\end{equation}
where $H$ is defined as in \eqref{eq:H-curvatura}. Notice also that, when $r \to 0$, the rescaled surfaces $\eta_{\overline x,r}(\pa\Omega)$ converge smoothly to the tangent plane $T_{\overline x} (\pa\Om) =: T_{\pa\Om}$, and the open sets $\Om_{\overline{x},r}$ converge to a half-space $\mathbb H$ with $\pa \mathbb H = T_{\pa\Om}$. Let now $\{r_k\}_{k=1}^\infty$ be a sequence with $r_k \to 0$ as $k \to \infty$. By the boundary monotonicity formula, there exists a (not relabeled) subsequence as well as a $(d-1)$-dimensional integral varifold cone $\overline V$ such that $V_k := V_{\overline{x},r_k}$ converge to $\overline V$ in the sense of varifolds. Furthermore, since the first variation is continuous with respect to varifolds convergence, and since $H$ is bounded in $L^\infty$, we deduce from \eqref{e:first-variation-bu} that 
\begin{equation}
    \label{eq:first_var_blowup}
    \int_{\R^d\times{\rm G}(d-1,d)} \Div_{S}\,Y(x)\,d\overline V(x, S)=0\,,
\end{equation}
for every vector field $Y \in C^1_c(\R^d;\R^d)$ such that $Y \cdot \nu_{T_{\pa\Om}}=0$, where we have denoted $\nu_{T_{\pa\Om}}$ the exterior unit normal to $\mathbb H$ at $T_{\pa\Om}$. In other words, $\overline V$ is stationary in $\mathbb H$ and has contact angle of $\sfrac{\pi}{2}$ at $T_{\pa\Om}$. Notice that $\nu_{T_{\pa\Om}}$ is the vector field constantly equal to $\nu_\Omega (\overline x)$. Furthermore, since the graph of $\zeta$ touches $K$ from above at $\overline{x}$, the support of $\overline V$ must be contained in the wedge $\mathbb W \subset \cl (\mathbb H)$ formed by $T_{\pa\Om}$ and the hyperplane $T_\zeta$ tangent to the graph of the function $\zeta$ at $\overline{x}$ (which has constant normal $\nu_\zeta(\overline{x})$). 

We claim that there is no such a $\overline V$ if $\nu_\Om(\overline{x}) \cdot \nu_\zeta (\overline{x}) < 0$. To see this, first observe that, by a dimension reduction argument, we may assume that the support of $\overline V$ meets $T_{\partial\Omega}\cap T_\zeta$ only at the origin. Up to a rotation, we may assume that $e_1\in T_{\partial\Omega}$ and $e_3, \dots\, e_d \in  \big(T_{\partial\Omega}\cap T_\zeta\big)$. Define 
$$
Y(x):= (\phi(x_1),0,0,\dots,0),
$$
where $\phi$ is a decreasing compactly supported function on $[0,\infty)$, i.e. $\frac{\pa \phi}{\pa x_1}<0$. We notice that $Y$ satisfies $Y \cdot \nu_{T_{\pa \Om}}=0$, and that, since the angle between $T_{\partial\Omega}$ and $T_\zeta$ is smaller than $\sfrac{\pi}{2}$, $Y$ can be modified into a compactly supported vector field in $\R^d$ without affecting the evaluation of the left-hand side of \eqref{eq:first_var_blowup}. Thus, by computing the tangential divergence in \eqref{eq:first_var_blowup} and upon denoting $\nu_{\overline V}(x)$ a unit vector field spanning the orthogonal complement to $S$ at $V$-a.e. $(x,S)$ (notice that $S$ is uniquely identified by $x$ because $\overline V$ is rectifiable), we get
$$
\begin{aligned}
    \diver_S Y(x) &= \diver_S \left((\phi(x_1), 0, \dots, 0)\right)\\
    &= \diver\left((\phi(x_1), 0, \dots, 0)\right) - \nu_{\overline V} \cdot \nabla \left((\phi(x_1), 0, \dots, 0)\right) \nu_{\overline{V}}\\
    &= \frac{\pa \phi}{\pa x_1}\left(1- (e_1 \cdot \nu_{\overline{V}})^2\right)\\
    &\leq 0\,.
\end{aligned}
$$
It follows from \eqref{eq:first_var_blowup} that $\nu_{\overline V}(x)=e_1$ for $\|\overline V\|$-almost every $x$ which in turn gives that $\overline V$ is supported on the orthogonal complement to $e_1$. In particular, $\overline V$ cannot lie in the wedge formed by $T_{\partial\Omega}$ and $T_\zeta$, and this is a contradiction.
\end{proof}

\begin{theorem}
    \label{thm:lambda>0}
Let $\Lambda>0$ and $\eps\in(0,\eps_0)$ with $\eps_0$ as in \cref{prop:esistenza_FF}. Let $(K,E)\in \mathcal{A}(W, \C)$ be a minimizer of $\FF$, and let $\lambda$ be as in Lemma \ref{lemma:stime_curvature}.  
Then $\lambda > 0$ and, by \cref{lemma:stime_curvature}, $|E|\le \eps$.
\end{theorem}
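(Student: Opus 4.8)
The plan is to argue by contradiction: assume $\lambda\le 0$ and show that $\cl(K)$ must then collapse onto the hyperplane $\bp$, which is impossible. First I would record that any minimizer has $|E|>0$: since $E$ is open, $|E|=0$ would force $E=\emptyset$, so that $K$ alone is $\mathcal C$-spanning $W$, hence $K\in\Sc(W,\mathcal{C})$ and, by \cref{rema:D0}, $\H^{d-1}(K)\ge\H^{d-1}(D_0)$; then $\FF(K,E)=2\,\H^{d-1}(K)+\Lambda\eps\ge 2\,\H^{d-1}(D_0)+\Lambda\eps$, contradicting the competitor bound $\FF(K,E)\le 2\,\H^{d-1}(D_0)+C_d\eps^2$ of \cref{prop:esistenza_FF} once $\eps$ is small. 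I would also recall from \cref{l:density-estimate} that $\cl(K)\cup\cl(E)$ is compact, and note that $\cl(E)\subset\cl(K)$ since $\pa E\subset K$, so that $h^*:=\sup\{x_d:x\in\cl(K)\}<+\infty$ is the height at which $\cl(K)\cup\cl(E)$ is topmost.

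The core is a sliding (viscosity) argument: slide barriers which are planes parallel to $\bp$ (slightly deformed near $\pa W$, so as to produce admissible contact points) towards $\cl(K)$ from $x_d=+\infty$, and symmetrically from $x_d=-\infty$. Sliding from above, the first contact occurs at height $h^*$, at a point $\overline x\in\cl(K)$ near which $\cl(K)\subset\{x_d\le h^*\}$ and $\{x_d>h^*\}$ is exterior to $E$. I would then distinguish three cases. \emph{(a)} If $\overline x\in\pa^*E$: by Allard's $\eps$-regularity theorem $\pa^*E$ is, near $\overline x$, a smooth CMC graph $x_d=u(x')$ with $u\le h^*$ and $u(\overline x')=h^*$; since the barrier approaches from outside $E$ we get $\nu_E(\overline x)=e_d$ and the equation $-{\rm div}(\nabla u/\sqrt{1+|\nabla u|^2})=\lambda$ (\cref{lemma:stime_curvature}), which at the interior maximum gives $-\Delta u(\overline x')=\lambda\le 0$; combined with $\Delta u(\overline x')\le 0$ this forces $\lambda=0$ and, by the strong maximum principle for the (uniformly elliptic, quasilinear) mean curvature operator applied to $h^*-u\ge 0$, $u\equiv h^*$ locally, i.e.\ $\pa^*E$ contains a flat horizontal disc. \emph{(b)} If $\overline x\in K\setminus\pa^*E$: the generalized mean curvature of $V$ vanishes near $\overline x$, so Sch\"atzle's strong maximum principle \cite[Theorem 6.2]{schatzle2004quadratic}, applied with the horizontal plane as barrier and using $\spt V\subset\{x_d\le h^*\}$, again forces $\spt V$ to contain a flat horizontal disc at $\overline x$. \emph{(c)} If $\overline x\in\pa\Omega$: I would apply the Hopf lemma \cref{lemma:con_bordo} with the (perturbed) constant barrier, whose upward normal is $e_d$; writing $\overline x=\gamma(s)+\delta\omega$ with $\nu_\Omega(\overline x)=-\omega$ and $h^*=\delta(\omega\cdot e_d)$, the inequality $\nu_\Omega(\overline x)\cdot e_d\ge 0$ yields $h^*\le 0$.

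In cases (a) and (b), I would propagate the flat disc: the set of points of $\spt V\cap\{x_d=h^*\}\cap\Omega$ admitting a flat relative neighborhood in $\{x_d=h^*\}$ is relatively open, and relatively closed by unique continuation (any point of $\pa^*E$ at height $h^*$ again falls into case (a) and forces $\lambda=0$, so the generalized mean curvature of $V$ vanishes identically along the top level); hence it is a union of connected components of $\{x_d=h^*\}\cap\Omega$. Since $\cl(K)$ is bounded while $\{x_d=h^*\}\cap\Omega$ has an unbounded component whenever $|h^*|>\delta$, we get $|h^*|\le\delta$ and the disc must consist of bounded components, whose relative boundary lies on $\{x_d=h^*\}\cap\pa W$; there the contact angle $\sfrac{\pi}{2}$ of $V$ with $\pa W$ (equivalently, the Hopf lemma) forces the horizontal disc to meet $\pa W$ orthogonally, i.e.\ $\omega\cdot e_d=0$, so $h^*=0$. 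Thus in every case $\cl(K)\subset\{x_d\le 0\}$, and the symmetric argument from below gives $\cl(K)\subset\{x_d\ge 0\}$. Hence $\cl(K)\cup\cl(E)\subset\bp$, so $\pa E\subset\bp$ and $E=\emptyset$ (otherwise $E$ would contain an entire open half-space, contradicting boundedness), against $|E|>0$. This proves $\lambda>0$; and since $\lambda>0$ and $\Lambda>0$, alternative (1) of \cref{lemma:stime_curvature} is excluded, so $|E|\le\eps$.

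The step I expect to be the main obstacle is the treatment of contact points on the fixed boundary $\pa W$: ensuring that the sliding barrier genuinely touches $K$ in the precise sense of \cref{def:touching} (which is exactly why one works with slight deformations of planes rather than exact planes), and combining the Hopf lemma with the geometry of the tubular neighborhood — namely the fact, already used in \cref{lemma:con_bordo}, that $\pa W$ is strictly convex in the $e_d$-direction (``positively curved along the normal to $D_0$''), so that a horizontal contact at $\pa W$ can only occur at height $\le 0$. The propagation/unique-continuation step and the bookkeeping between the strata $\pa^*E$ and $K\setminus\pa^*E$ along the top level are the other delicate points.
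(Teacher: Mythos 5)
Your global strategy is the same as the paper's: slide horizontal planes onto $\cl(K)$ from above and below, use the Hopf-type boundary lemma (\cref{lemma:con_bordo}) to exclude contact points on $\pa\Omega$ at positive (resp.\ negative) height, use a maximum principle at interior contact points, conclude $K\subset\bp$ and contradict ${\rm Vol}(E)>0$; the final deduction $|E|\le\eps$ from \cref{lemma:stime_curvature} is identical. Where you diverge is the treatment of the interior contact. The paper does \emph{not} use coincidence plus propagation: it replaces the flat plane by the strictly mean-convex barriers $\zeta_{\tau,n}(z)=\tau-\tfrac1{2n}|z-\overline x|^2$, re-slides, uses \cref{lemma:con_bordo} again to force the new contact point into $\Omega$, and then gets an immediate contradiction from the \emph{strict} inequality between the barrier's curvature ($\ge\overline c>0$) and $H\cdot\nu\le0$ via Sch\"atzle's theorem, applied directly to the height function $\phi^+$ of the varifold. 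This one perturbation removes both of the delicate points you flag: no regularity at the contact point is ever needed, and no flat disc has to be continued to $\pa W$.

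Two concrete remarks on your version. First, case (a) as written has a gap: you invoke Allard's $\eps$-regularity at the specific contact point $\overline x\in\pa^*E$, but nothing controls the density of the varifold there — sheets of $K\setminus\pa^*E$ (multiplicity $2$) may accumulate at $\overline x$, so $\overline x$ need not be a regular point and the smooth CMC graph description is unjustified. The fix is exactly your case (b): Sch\"atzle's principle \cite[Theorem 6.2]{schatzle2004quadratic} applies to the whole varifold through $\phi^+$, with $H\cdot e_d=\lambda(\nu_E\cdot e_d)\le0$ on $\pa^*E$ and $=0$ on $K\setminus\pa^*E$, so both strata are handled at once and the intermediate claim ``$\lambda=0$'' is never needed. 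Second, your propagation/open-closed step is workable (it mirrors what the paper itself does later, in Step 2(C) and Step 3(2) of \cref{lemma:pernonesseretroppolunghi}), but the ``relatively closed by unique continuation'' phrasing should be replaced by re-applying Sch\"atzle at each limit point of the flat region (the plane still touches $\spt\|V\|$ from above there), and the boundary contact of the propagated disc has to be handled with the same latitude in \cref{def:touching} that the paper allows itself (at such a point $\phi^+(\mathbf{p}(\overline x))$ need not equal $h^*$; what the blow-up argument of \cref{lemma:con_bordo} really uses is the wedge containment of $\spt\|V\|$ near $\overline x$). With these repairs your route closes, but it is strictly heavier than the paper's perturbed-barrier argument.
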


\begin{proof}
Suppose by contradiction that $\lambda\le 0$. 
For every $\tau \in \R$, we define $\zeta_\tau \colon \bp \to \R$ to be the function $\zeta_\tau (z) \equiv \tau$. Let $\tau^+$ to be the smallest $\tau$ for which $\zeta_\tau\ge \phi^+$ on ${\bf p}(K)$ and $\tau^-$ to be the largest $\tau$ for which $\zeta_\tau\le \phi^-$ on ${\bf p}(K)$. Notice that, since $K$ is bounded (see \cref{l:density-estimate}), 
$$-\infty < \tau^- \leq \tau^+ < +\infty\,.$$ 
We will prove that the assumption $\lambda\le 0$ entails that $\tau^+$ cannot be strictly positive (respectively that $\tau^-$ cannot be strictly negative); thus, the only remaining possibility is $\tau^+=\tau^-=0$, which implies that $K$ is contained in $\bp$, which is impossible as it violates the fact that ${\rm Vol}(E)>0$. We prove next that $\tau^+ \leq 0$ if $\lambda \leq 0$ (the result for $\tau^-$ being analogous). 

Arguing by contradiction, we suppose that $\tau^+>0$ and for simplicity we set  $\zeta_+:=\zeta_{\tau^+}$. Let $\overline x$ be a contact point of the graph of $\zeta_+$ with $\cl (K)$. We notice that by \cref{lemma:con_bordo}
$\overline x\notin \partial\Omega$; indeed, the normal to the graph of $\zeta_+$ pointing upwards is $e_d$, and $\nu_\Omega\cdot e_d < 0$ in the upper half-space $\{x_d>0\}$. So, the only remaining possibility is that $\overline{x} \in K$. For every $n\in\mathbb{N}_{\ge 1}$ and every $\tau\in\R$, let $\zeta_{\tau,n} \colon \bp\to \R$ be the function defined by
\[
\zeta_{\tau,n}(z) := \tau-\frac1{2n}|z-\overline x|^2\,.
\]
For fixed $n$, let $\tau_n^+$ be the smallest $\tau$ such that $\zeta_{\tau,n}\ge \phi^+$. By construction $\tau_n^+\ge\tau^+$. Moreover, due to \cref{lemma:con_bordo}, for $n$ large enough, there cannot be contact points between the graph of $\zeta_{\tau_n^+,n}$ and $\cl (K)$ on the boundary $\partial\Omega$. Thus, we can find $n$ such that the graph of $\zeta_{\tau_n^+,n}$ touches $K$ at a point $x_n\in\Omega$. Now, on one hand 
$$
-\diver\left(\frac{\nabla \zeta_{\tau_n^+,n}}{\sqrt{1 + \abs{\nabla \zeta_{\tau_n^+,n}}^2}}\right)(z)=\frac{dn^2+(d-1)|z-\overline x|^2}{\big(n^2+|z-\overline x|^2\big)^{3/2}}\ge\overline c,
$$
for some constant $\overline c>0$ depending only on $d$ and $n$. On the other hand, there exists a set $Z \subset \mathbf{p}(K) \subset \bp$ with $\mathcal{L}^{d-1}(Z)=0$ such that for every $z \in \mathbf{p} (K) \setminus Z$ one has
$$
H(z,\phi^+(z))=\left\{
\begin{aligned}
&\lambda\, \nu_E(z,\phi^+(z))&&\text{when}\quad (z,\phi^+(z))\in \partial^\ast E\\
&0&&\text{when}\quad (z,\phi^+(z))\in K\setminus \partial^\ast E\,.
\end{aligned}
\right.
$$
Since
\[
\nu_E(z, \phi^+ (z)) = \frac{(-\nabla\phi^+(z),1)}{\sqrt{1 + |\nabla\phi^+(z)|^2}} \qquad \mbox{whenever $(z,\phi^+ (z)) \in \pa^*E$}\,,
\]
we have that, for $z \in \mathbf{p} (K) \setminus Z$,
\[
H (z, \phi^+(z)) \cdot \left(\frac{\left(-\nabla \phi^{+}(z), 1\right)}{\sqrt{1 + \abs{\nabla \phi^{+}(z)}^2}}\right) = 
\begin{cases}
    \lambda & \mbox{when $(z,\phi^+(z)) \in \pa^*E$} \\
    0 & \mbox{when $(z, \phi^+ (z)) \in K \setminus \pa^*E$}\,.
\end{cases}
\]
In particular, if $\lambda \leq 0$, then
 \begin{equation} \label{ordering of ops}
    -\diver\left(\frac{\nabla \zeta_{\tau_n^+,n}}{\sqrt{1 + \abs{\nabla \zeta_{\tau_n^+,n}}^2}}\right) (z) > H (z, \phi^+(z)) \cdot \left(\frac{\left(-\nabla \phi^{+}(z), 1\right)}{\sqrt{1 + \abs{\nabla \phi^{+}(z)}^2}}\right),
    \end{equation}
for $\mathcal L^{d-1}$-a.e. $z \in \mathbf{p}(K)$. This is a contradiction to Sch\"atzle's strong maximum principle, see \cite[Theorem 6.2]{schatzle2004quadratic}, and concludes the proof.
\end{proof}

\subsection{Uniform upper bound on the mean curvature \texorpdfstring{$\lambda$}{lambda}}

First of all we show that if the volume is small enough, then any minimizer $(K,E)$ is contained in a strip. For any $\sigma > 0$ we set the notation
\[
\mathcal T_\sigma:=\{- \sigma <  x_d <  \sigma\}\,.
\]
\begin{definition}\label{definition:tilde}
We define $\widetilde\Omega$ to be the only bounded connected component of $\Om \cap \mathcal T_\delta$. The open set $\widetilde \Omega$ can also be characterized as the connected component of $\Omega$ containing $D_0$.    
\end{definition} 

\begin{lemma}
\label{lemma:Estanellastriscia}
For every $\Lambda>0$, $\de > 0$ and $\sigma>0$, there exists $\varepsilon_0 > 0$ with the following property. For $\varepsilon 
\leq \varepsilon_0$ if $(K,E)$ is a minimizer of $\FF$ then
\begin{equation}
    \label{eq:tuttostriscia}
    K \cup E\subset \Om \cap \mathcal T_{\delta \sigma}\,.
\end{equation}
\end{lemma}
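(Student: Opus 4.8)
The plan is to deduce \eqref{eq:tuttostriscia} from the concentration of minimizers onto the disc $D_0\subset\bp$ as $\eps\to0^+$: namely, from the fact that the varifold $V$ associated with a minimizer $(K,E)$ of $\FF$ satisfies $\|V\|\weakstar 2\,\H^{d-1}\text{\Large$\llcorner$} D_0$. First I would reduce the statement to the inclusion $\cl(K)\subseteq\mathcal T_{\delta\sigma}$. Assuming this, let $p=(z,p_d)\in E$ with, say, $p_d\ge0$. Since $\Gamma\subset\bp$, a point $(z',x_d)$ lies in $W$ precisely when $\mathrm{dist}_{\R^{d-1}}(z',\Gamma)^2+x_d^2\le\delta^2$; as $p\notin W$, this gives $\mathrm{dist}_{\R^{d-1}}(z,\Gamma)^2+(p_d+t)^2>\delta^2$ for every $t\ge0$, so the upward vertical ray from $p$ stays in $\Om$. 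Because $E$ is bounded (\cref{l:density-estimate}), this ray exits $E$ at a first point $q=p+t_1e_d$ with $t_1>0$, and $q\in\Om\cap\pa E\subseteq K\subseteq\cl(K)\subseteq\mathcal T_{\delta\sigma}$ by admissibility; hence $p_d=q_d-t_1<q_d<\delta\sigma$, and (together with $p_d\ge0>-\delta\sigma$) $p\in\mathcal T_{\delta\sigma}$. The symmetric argument for $p_d\le0$ yields $E\subseteq\mathcal T_{\delta\sigma}$ and thus \eqref{eq:tuttostriscia}.

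To prove $\cl(K)\subseteq\mathcal T_{\delta\sigma}$, I would first establish the concentration $\|V\|\weakstar 2\,\H^{d-1}\text{\Large$\llcorner$} D_0$, arguing exactly as in \cite{king2022plateau} (see also \cref{sub:strategy_intro}). On the one hand, \cref{prop:esistenza_FF} and the ensuing remark give $\|V\|(\R^d)=\F(K,E)\le 2\,\H^{d-1}(D_0)+C_d\eps^2$. On the other hand, since $K\supseteq\Om\cap\pa E$ and $K\cup E$ is $\C$-spanning, one has $\mathbf p(K)\supseteq D_0$, and a vertical line over a point of $D_0$ meets $\pa^*E$ at least twice if it meets $E$ and otherwise meets $K\setminus\pa^*E$; a slicing/projection argument then yields $\|V\|(\R^d)=\H^{d-1}(\pa^*E)+2\,\H^{d-1}(K\setminus\pa^*E)\ge 2\,\H^{d-1}(D_0)$. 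Hence $\|V\|(\R^d)\to 2\,\H^{d-1}(D_0)$ and $\Lambda\,|{\rm Vol}(E)-\eps|\le C_d\eps^2$, so ${\rm Vol}(E)\to0$; since moreover $V$ is an integral varifold with generalized mean curvature bounded by $\Lambda$ and \eqref{e:minimality-diffeo} holds, the compactness theorem for integral varifolds, together with the uniqueness of the homotopic Plateau solution $D_0$ (\cref{rema:D0}), identifies the weak-$\ast$ limit of $\|V\|$ as $\eps\to0^+$ with $2\,\H^{d-1}\text{\Large$\llcorner$} D_0$.

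With concentration established, the localization is immediate. Let $C>0$ and $r_0>0$ be the constants of \cref{l:density-estimate} (depending only on $d$, $W$, $\Lambda$), and set $r:=\min\{r_0,\delta\sigma/2\}$. Since $D_0\subset\bp$, the weak-$\ast$ limit $2\,\H^{d-1}\text{\Large$\llcorner$} D_0$ assigns zero mass to the closed set $\{|x_d|\ge\delta\sigma/2\}$, so there is $\eps_0>0$ with $\|V\|(\{|x_d|\ge\delta\sigma/2\})<C\,r^{d-1}$ for every $\eps\le\eps_0$. If some $y\in\cl(K)$ satisfied $|y_d|\ge\delta\sigma$, then $B_r(y)\subseteq\{|x_d|>\delta\sigma/2\}$ and \cref{l:density-estimate} would give $\|V\|(B_r(y))\ge C\,r^{d-1}$, a contradiction. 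Therefore $\cl(K)\subseteq\mathcal T_{\delta\sigma}$, which with the first paragraph proves the lemma.

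The one genuinely delicate ingredient is the concentration $\|V\|\weakstar 2\,\H^{d-1}\text{\Large$\llcorner$} D_0$, which is precisely what forbids, in the limit, a sliver of surface persisting at a fixed positive height. It rests on the sharpness up to $O(\eps^2)$ of the competitor estimate \eqref{e:stima_disco}, on the projection lower bound, and — crucially for this geometry — on the uniqueness of the homotopic Plateau solution for planar $\Gamma$. The density-based localization and the passage from $\cl(K)$ to $K\cup E$ are comparatively soft.
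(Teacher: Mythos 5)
Your proposal is correct and follows essentially the same route as the paper: the key step in both is the weak-$\ast$ concentration $\|V\|\weakstar 2\,\H^{d-1}\text{\Large$\llcorner$} D_0$ (which the paper, like you, ultimately takes from \cite{king2022plateau,MaggiNovackRestrepo} together with the $O(\eps^2)$ competitor bound \eqref{e:stima_disco}), combined with the uniform density estimate of \cref{l:density-estimate} to exclude points of $\cl(K)$ at height $\geq \delta\sigma$. Your explicit vertical-ray argument passing from $K$ to $E$, and the cleaner choice $r=\min\{r_0,\delta\sigma/2\}$ in the localization, are only presentational refinements of what the paper does implicitly.
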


\begin{proof}
For every $\eps$ small enough, let $(K_{\Lambda,\eps},E_{\Lambda,\eps})\in\mathcal A(W,\mathcal C)$ be a minimizer of $\mathcal F_{\Lambda,\eps}$ given by \eqref{e:pen_relaxation_bd}.
First of all, we notice that $\H^{d-1}\text{\Large$\llcorner$}(\partial^\ast E_{\Lambda,\eps}\cap \Omega) + 2 \H^{d-1}\text{\Large$\llcorner$}(K_{\Lambda,\eps} \setminus\pa^\ast E_{\Lambda,\eps})$ converges weakly* to $2\H^{d-1}\text{\Large$\llcorner$} D_0$ as $\eps \to 0$. Indeed, since $\Lambda$ is fixed and $0< |E_{\Lambda,\eps}|\le \eps$ (by \cref{thm:lambda>0}), the claim follows by \cite[Theorem 1.5]{MaggiNovackRestrepo} or \cite[Theorem 1.9]{king2022plateau}.
As a consequence, given $\delta >0$ and $\zeta>0$, there exists $\eps_0=\eps_0(\delta,\zeta)$ such that 
$$
\H^{d-1}(\partial^\ast\EL \cap\Omega\cap \mathcal T_{2 \delta \sigma}) + 2 \H^{d-1}(K_{\Lambda,\eps} \setminus\partial^\ast\EL\cap \mathcal T_{2 \delta \sigma}) \ge 2\H^{d-1}(D_0)-\zeta \qquad \forall\, \eps<\eps_0.
$$
On the other hand, we recall that, for $\eps$ small enough (with respect to $\delta$), it holds:
$$
\H^{d-1}(\Omega\cap \partial^\ast\EL ) + 2 \H^{d-1}(K_{\Lambda,\eps} \setminus\partial^\ast\EL)\le 2\H^{d-1}(D_0)+C\eps^2,
$$
which implies that for every $\eps\le \eps_0$
\begin{equation}
\label{eq:quantostain_S110}
    \H^{d-1}(\Omega\cap \partial^\ast\EL \setminus \mathcal T_{2\delta \sigma}) + 2 \H^{d-1}\left((K_{\Lambda,\eps} \setminus\partial^\ast\EL)\setminus \mathcal T_{2 \delta \sigma}\right)\le \zeta+C\eps_0^2.
\end{equation}
Now, assuming by contradiction that 
$$
\H^{d-1}(\Omega \cap \partial^\ast\EL\setminus \mathcal T_{\delta \sigma}) + 2 \H^{d-1}\left((K_{\Lambda,\eps} \setminus\partial^\ast\EL)\setminus \mathcal T_{\delta \sigma}\right) >0,
$$
and combining the density estimate \cref{l:density-estimate} with
\eqref{eq:quantostain_S110}, we get
$$
\zeta+C\eps_0^2\geq \H^{d-1}(\pa^\ast E_{\Lambda, \eps} \cap \Omega \setminus \mathcal T_{2\delta \sigma}) + 2 \H^{d-1}\left((K_{\Lambda,\eps} \setminus\partial^\ast\EL)\setminus \mathcal T_{2\delta \sigma}\right) \geq C\left(\min\left\{r_0,2 {\delta \sigma}\right\}\right)^{d-1},
$$
where $r_0$ and $C$ are the constants from \eqref{eq:denisty_estimate_lemma}.
Choosing
$$
\zeta = \frac{1}{2}C \left(\min\left\{r_0,2{\delta \sigma}\right\}\right)^{d-1} \qquad \hbox{ and }\qquad C\eps_0^2 \leq \zeta,
$$
we get a contradiction. Hence, \eqref{eq:tuttostriscia} must be true. 
\end{proof}

In order to get the main result of the section, we provide an equivalent formulation.
\begin{proposition}
\label{prop:lunga!!!}
From \cref{lemma:Estanellastriscia}, let $\sigma = \sfrac{1}{5}$. For every $(K,E)\in\mathcal A(W,\mathcal C)$ minimizer of $\mathcal F_{\Lambda,\eps}$ contained in the strip $\mathcal T_{\sfrac{\delta}{5}}$, there are two disjoint connected open sets $S_+$ and $S_-$ of finite perimeter with the following properties. Let $\widetilde\Omega$ be the only bounded connected component of $\Om \cap \mathcal T_\delta$. Then:
\begin{enumerate}[\rm(i)]
\item $\widetilde\Omega\cap\{\sfrac{\delta}{5}<x_d<\delta\}\subset S_+$;
\item $\widetilde\Omega\cap\{-\delta<x_d<-\sfrac{\delta}{5}\}\subset S_-$; 
\item up to a $\H^{d-1}$-negligible set, we have
\begin{equation}
\label{e:tildeK_tildeE}
 K=\widetilde\Omega\cap(\partial S_+\cup\partial S_-)\qquad\text{and}\qquad  E=\widetilde\Omega\setminus (\overline S_+\cup\overline S_-),
\end{equation}
and the energy of $(K,E)$ is given by
$$\FF(K,E)=\H^{d-1}(\widetilde\Omega\cap \partial^\ast S_+)+\H^{d-1}(\widetilde\Omega\cap \partial^\ast S_-)+\Lambda\left|{\rm Vol}\left(\widetilde\Omega\setminus (\overline S_+\cup\overline S_-)\right)-\eps\right|.
$$
\end{enumerate}
\end{proposition}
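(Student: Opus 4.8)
The plan is to analyse the open set $N:=\widetilde\Omega\setminus(K\cup E)$ and to show that it has exactly two connected components, which will be the sets $S_\pm$. Two preliminary remarks: since $E$ is open and $\Omega\cap\partial E\subseteq K$, every point of $\widetilde\Omega\setminus(K\cup E)$ lies outside both $\overline E$ and $K$, so $N$ is open and $\widetilde\Omega=N\sqcup K\sqcup E$ (here $K\cap E=\emptyset$ by the regularity of the minimizer); and by \cref{l:density-estimate} the set ${\rm cl}(K)\cup{\rm cl}(E)$ is bounded, hence, being contained in $\Omega\cap\mathcal T_{\delta/5}$, it lies in the unique bounded connected component of $\Omega\cap\mathcal T_{\delta/5}$, which is contained in $\widetilde\Omega$. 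Because the slice of $W$ at height $x_d=t$ is $\{x':\dist(x',\Gamma)\le\sqrt{\delta^2-t^2}\}$, for $\delta\le\delta_0$ the sets $A_\pm:=\widetilde\Omega\cap\{\pm x_d>\delta/5\}$ are nonempty, open and connected, and they are disjoint from $K\cup E\subseteq\mathcal T_{\delta/5}$; thus each $A_\pm$ lies in a single component of $N$, and we let $S_+$ (resp.\ $S_-$) be the component of $N$ containing $A_+$ (resp.\ $A_-$). These are bounded, open and connected, with finite perimeter since $\partial S_\pm\subseteq K\cup\partial\widetilde\Omega$ has finite $\H^{d-1}$-measure; and (i), (ii) hold by construction.

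Next I would prove $S_+\ne S_-$. If $S_+=S_-=:S$, choose $a\in A_+$ and $b\in A_-$ of the form $(x_0',\pm\delta/2)$ with $x_0'\in D_0$, and join them by a path $p\subset S$ (the open connected set $S$ is path-connected); then close $p$ into a loop $\gamma$ by descending vertically from $b$, running far out horizontally beyond a ball containing ${\rm cl}(K\cup E)$, ascending vertically, and returning in and down to $a$. Then $\gamma\subset\Omega\setminus(K\cup E)$: $p$ avoids $K\cup E$ by construction, while the closing arc stays outside $\mathcal T_{\delta/5}$ near its endpoints and far from ${\rm cl}(K\cup E)$ elsewhere. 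Moreover $p$ meets $\{x_d=0\}$ only inside $\widetilde\Omega\cap\{x_d=0\}=D_0$ and with net multiplicity $\pm1$ (the endpoints of $p$ are on opposite sides of $\{x_d=0\}$), whereas the closing arc meets $\{x_d=0\}$ only far from $\Gamma$; hence $\gamma$ has linking number $\pm1$ with $\Gamma$ and is a non-trivial element of $\pi_1(\Omega)\cong\pi_1(\R^d\setminus\Gamma)$. As $d\ge3$, a small perturbation keeping $\gamma$ inside $\Omega\setminus(K\cup E)$ makes it a smooth embedding, so $\gamma\in\mathcal C$ and $\gamma\cap(K\cup E)=\emptyset$, contradicting that $K\cup E$ is $\mathcal C$-spanning $W$.

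The crux is ruling out a third component $S_0$ of $N$. Such an $S_0$ is disjoint from $\{|x_d|>\delta/5\}$ (covered by $A_\pm\subseteq S_\pm$), hence, being open, $S_0\subseteq\widetilde\Omega\cap\mathcal T_{\delta/5}$ and ${\rm Vol}(S_0)\le{\rm Vol}(\widetilde\Omega\cap\mathcal T_{\delta/5})$; moreover $\partial S_0\cap\Omega\subseteq\widetilde\Omega\cap K$, since a boundary point of $S_0$ in $\Omega$ lies in $\widetilde\Omega$ ($\widetilde\Omega$ being relatively closed in $\Omega$) and can lie neither in the open set $E$ nor in the open set $N$. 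Therefore $\Omega\cap\partial(E\cup S_0)\subseteq K$, and the pair $(K,E\cup S_0)$ — after, if necessary, replacing $E\cup S_0$ by the interior of its closure in $\Omega$, a modification by an $\H^{d-1}$-null set — lies in $\mathcal A(W,\mathcal C)$, the spanning condition being inherited from $K\cup E\subseteq K\cup(E\cup S_0)$. A bookkeeping of reduced boundaries and densities (using $K\cap E^{(1)}=\emptyset$ and $\partial^*S_0\cap\Omega\subseteq K$) yields the exact identity $\widehat{\mathcal F}(K,E\cup S_0)-\widehat{\mathcal F}(K,E)=-\H^{d-1}(\Omega\cap\partial^*S_0)$, because filling in $S_0$ either deletes a multiplicity-one piece of $\Omega\cap\partial^*E$ or halves the multiplicity of a piece of $K\setminus\partial^*E$. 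On the other hand, by \cref{thm:lambda>0} we have ${\rm Vol}(E)\le\eps$, so the penalization changes by at most $\Lambda\,{\rm Vol}(S_0)$, while the relative isoperimetric inequality gives $\H^{d-1}(\Omega\cap\partial^*S_0)\ge c(d,\Omega)\,{\rm Vol}(S_0)^{(d-1)/d}$. Since ${\rm Vol}(S_0)$ is small (for $\delta$ small), $c(d,\Omega)\,{\rm Vol}(S_0)^{(d-1)/d}>\Lambda\,{\rm Vol}(S_0)$, so $(K,E\cup S_0)$ strictly decreases $\widehat{\mathcal F}_{\Lambda,\eps}=\FF$, contradicting minimality. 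Hence $N=S_+\sqcup S_-$. I expect this step — pinning down the sign of the energy variation and verifying that admissibility and the spanning condition survive the modification — to be the main obstacle.

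Finally, (iii). From $N=S_+\sqcup S_-$ and $K\cap E=\emptyset$, the set $\widetilde\Omega$ is the disjoint union of $S_+$, $S_-$, $E$, $K$, and an elementary openness argument gives $\widetilde\Omega\cap(\partial S_+\cup\partial S_-)\subseteq K$. For the reverse inclusion, at $\H^{d-1}$-a.e.\ $x_0\in K$ the integral varifold $V$ has an approximate tangent plane with multiplicity $1$ or $2$: if $1$, then $x_0\in\partial^*E$, with $E$ on one side and $S_+$ or $S_-$ on the other, so $x_0\in\partial S_+\cup\partial S_-$; if $2$, then $x_0\notin\partial E$ and both local sides of $K$ belong to $N$, and the possibility that, on a set of positive $\H^{d-1}$-measure, they belong to the same $S_\pm$ is excluded by the same competitor reasoning (that portion of $K$ could be deleted — a test loop crossing it is homotoped off it inside $S_\pm$ — strictly decreasing the energy), so $x_0\in\partial S_+\cap\partial S_-$. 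Hence $K=\widetilde\Omega\cap(\partial S_+\cup\partial S_-)$ and $E=\widetilde\Omega\setminus(\overline S_+\cup\overline S_-)$ up to $\H^{d-1}$-null sets. Keeping track of multiplicities, $\H^{d-1}(\Omega\cap\partial^*E)=\H^{d-1}(\partial^*E\cap\partial^*S_+)+\H^{d-1}(\partial^*E\cap\partial^*S_-)$ (the non-$E$ side of $\partial^*E$ lying in $S_+$ or $S_-$) and $2\,\H^{d-1}(K\setminus\partial^*E)=2\,\H^{d-1}(\partial^*S_+\cap\partial^*S_-)$, so $\mathcal F(K,E)=\H^{d-1}(\widetilde\Omega\cap\partial^*S_+)+\H^{d-1}(\widetilde\Omega\cap\partial^*S_-)$; combined with ${\rm Vol}(E)={\rm Vol}(\widetilde\Omega\setminus(\overline S_+\cup\overline S_-))$, this gives the asserted formula for $\FF(K,E)$.
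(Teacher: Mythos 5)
Your construction of $S_\pm$ and the loop argument for $S_+\neq S_-$ coincide with the paper's Step 1, and your way of excluding further components of $\widetilde\Omega\setminus(K\cup E)$ — filling a putative third component $S_0$ into $E$ and beating the penalization by the relative isoperimetric inequality — is a genuinely different, and essentially viable, shortcut compared with the paper's route (which instead proves the inequalities for the full family $\{S_j\}$ and then minimizes the auxiliary two-phase functional $\mathcal G$ of \eqref{e:def_G}). Two repairs are needed there but are harmless: your claimed exact identity $\widehat{\mathcal F}(K,E\cup S_0)-\widehat{\mathcal F}(K,E)=-\H^{d-1}(\Omega\cap\pa^*S_0)$ is in general only an inequality $\le$, because points of $(K\setminus\pa^*E)\cap S_0^{(1)}$ (double sheets swallowed by $S_0$) drop the energy by $2$ rather than $1$ — fortunately in the favourable direction; and admissibility of the competitor (condition (ii) of $\mathcal A(W,\mathcal C)$, i.e.\ $\Omega\cap{\rm cl}(\pa^*E')=\Omega\cap\pa E'\subset K$) does require passing to the regular-open representative, as you indicate, which one can check works.

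The genuine gap is in part (iii), at the multiplicity-two points. To get $K\subset\pa S_+\cup\pa S_-$ and the energy identity you must exclude a set of positive $\H^{d-1}$-measure of points of $K\setminus\pa^*E$ at which a single component, say $S_+$, has Lebesgue density $1$ (a double sheet of $K$ with both sides in the same $S_+$). Your proposed competitor — delete that portion $P$ of $K$ — is not justified: you would have to check that $K\setminus P$ is still relatively closed, that $\Omega\cap\pa E\subset K\setminus P$ (points of $\pa E\setminus\pa^*E$ may lie in $P$), and above all that $(K\setminus P)\cup E$ is still $\mathcal C$-spanning; the parenthetical claim that a test loop crossing $P$ ``is homotoped off it inside $S_\pm$'' is exactly the assertion that needs proof, and there is no local reason why a rectifiable double sheet whose two sides lie in the same complementary component is homotopically redundant. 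This is precisely the difficulty the paper's Steps 2--5 are designed to circumvent: one minimizes $\mathcal G$ over pairs $(U_+,U_-)$ containing $\Omega_\pm$, so that the competitor $(K'',E'')$ built from the minimizer $(A_+,A_-)$ is automatically spanning (its boundary separates $\Omega_+$ from $\Omega_-$ in $\widetilde\Omega$) and, thanks to the density estimates for $A_\pm$, satisfies $\mathcal F(K'',E'')=\H^{d-1}(\widetilde\Omega\cap\pa^*A_+)+\H^{d-1}(\widetilde\Omega\cap\pa^*A_-)$; the equality case in the resulting chain of inequalities is what kills the same-side double sheets, and it simultaneously delivers the density estimates for $S_\pm$ that let one replace reduced by topological boundaries in \eqref{e:tildeK_tildeE} — a point your statement of (iii) also needs and which your argument does not supply. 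Unless you either prove that deleting $P$ preserves the spanning condition and admissibility, or import an argument of the paper's type (comparison with a cleaned-up, provably spanning competitor), your proof of (iii) is incomplete.
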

\begin{proof}
We divide the proof in few steps.
\\
\\
\noindent {\it Step 1. Construction of $S_+$ and $S_-$.} Since $K \cup E \subset \Omega \cap \mathcal T_{\sfrac{\delta}{5}}$, by \cref{lemma:Estanellastriscia}, we have that 
\begin{align}
\label{e:omega+}
    \Omega_+ \cap (K\cup E)&:= \Big(\widetilde\Omega\cap\{\sfrac{\delta}{5}<x_d<\delta\}\Big)\cap (K\cup E)=\emptyset,\\ 
\label{e:omega-}
    \Omega_-\cap (K\cup E)&:=\Big(\widetilde\Omega\cap\{-\delta<x_d<-\sfrac{\delta}{5}\}\Big)\cap (K\cup E)=\emptyset.
\end{align}
Let $S_+$ and $S_-$ be the connected components of the open set $$\widetilde\Omega\setminus (K\cup E),$$
containing $\Omega_+:=\widetilde\Omega\cap\{\sfrac{\delta}{5}<x_d<\delta\}$ and $\Omega_-:=\widetilde\Omega\cap\{-\delta<x_d<-\sfrac{\delta}{5}\}$ respectively. Since $K \cup E$ is $\mathcal C$-spanning, we obtain that $S_+\cap S_-=\emptyset$.
Moreover,
we have that
$$\widetilde\Omega\cap\partial S_\pm\subset \mathcal T_{\sfrac{\delta}{5}}.$$
Since 
$$\widetilde\Omega\cap\partial S_\pm\subset K$$
and $\mathcal H^{d-1}(K)<+\infty$, we get
$$\H^{d-1}(\widetilde\Omega\cap\partial S_\pm)\leq  \mathcal H^{d-1}(K)<+\infty$$ 
implying that by \cite[Section 5.11]{evans2015} the sets $S_\pm$ have relatively finite perimeter in $\widetilde\Omega$ and 
$$\H^{d-1}(\widetilde\Omega\cap \partial^\ast S_\pm)\le \H^{d-1}(\widetilde\Omega\cap \partial S_\pm).$$
Moreover, we also know that $\widetilde\Omega\cap \partial S_\pm$ are both $\mathcal C$-spanning since $\widetilde \Omega$ is homotopically equivalent to a cylinder.
\\
\\
\noindent{\it Step 2.} 
Let $\{S_j\}_{j\ge 1}$ be the family of connected components of $\widetilde\Omega\setminus(K \cup E)$ different from $S_+$ and $S_-$. We claim that:
\begin{equation}\label{e:competitor-in-strip-volume}
{\rm Vol}\left(\widetilde\Omega\setminus(\overline S_+\cup \overline S_-)\right)={\rm Vol}(E)+\sum_{j=1}^{+\infty}{\rm Vol}(S_j),
\end{equation}
and
\begin{equation}\label{e:competitor-in-strip-perimeter}
\H^{d-1}(\widetilde\Omega\cap\partial^\ast S_+)+\H^{d-1}(\widetilde\Omega\cap\partial^\ast S_-)+\sum_{j=1}^{+\infty}\H^{d-1}(\widetilde\Omega\cap\partial^\ast S_j)\leq \mathcal F(K,E).
\end{equation}
The equality \eqref{e:competitor-in-strip-volume} follows immediately. Indeed, by construction, 
$$E\cup\left(\bigcup_{j=1}^{+\infty}S_j\right)\subset\, \widetilde\Omega\setminus(\overline S_+\cup \overline S_-)\,\subset E\cup\left(\bigcup_{j=1}^{+\infty}S_j\right)\cup K$$
and $\H^{d-1}(K)<+\infty$. In order to prove \eqref{e:competitor-in-strip-perimeter}, we notice that: 
\begin{itemize}
\item every connected component $S$ of $\widetilde \Omega\setminus(K\cup E)$ has finite perimeter in $\widetilde\Omega$ and, by Federer's theorem \cite[Theorem 16.2]{maggi_book}, up to removing a $\H^{d-1}$-negligible set, we have: 
$$\partial S=(S^{(0)}\cap\partial S)\cup (S^{(1)}\cap\partial S)\cup S^{(\sfrac12)}\qquad\text{and}\qquad \partial^\ast S=S^{(\sfrac12)},$$
where $S^{(\alpha)}$ denotes the set of points in $\R^d$ at which the Lebesgue density of $S$ is equal to $\alpha$;
\item similarly, up to removing another $\H^{d-1}$-negligible set, we have: 
$$\partial E=(E^{(0)}\cap\partial E)\cup (E^{(1)}\cap\partial E)\cup E^{(\sfrac12)}\qquad\text{and}\qquad \partial^\ast E=E^{(\sfrac12)};$$
\item by \cite{king2022plateau, MaggiNovackRestrepo}, we have that for $\mathcal H^{d-1}$-almost every point $x_0\in K\subset\Omega$, the following holds: 
$$\theta(x_0):=\lim_{r\to0}\frac{\H^{d-1}(B_r(x_0)\cap \partial^\ast E)+2\H^{d-1}(B_r(x_0)\cap K\setminus\partial^\ast E)}{\omega_{d-1}r^{d-1}}=\begin{cases}1\quad\text{if}\quad x_0\in\partial^\ast E,\smallskip\\
2\quad\text{if}\quad x_0\in K\setminus \partial^\ast E.
\end{cases}$$
\end{itemize}
Let $S'$ and $S''$ be two different connected components of $\widetilde \Omega\setminus(K\cup E)$. Then, for $\H^{d-1}$-almost every point $x_0$ it holds:
 \begin{itemize}
\item if $x_0\in \partial^{\ast}S'\cap\partial^{\ast}S''$, then $x_0\notin\partial^\ast E$ and so $\theta(x_0)=2$; 
moreover, $x_0\notin  \partial^\ast S'''$, for any connected component $S'''$ different from $S'$ and $S''$;
\item if $x_0\in \partial^{\ast}S'$ and $x_0\notin   \partial^\ast S''$, for any connected component $S''\neq S'$, then $\theta(x_0)=1$ and $x_0\in\partial^\ast E$.
\end{itemize}
This concludes the proof of \eqref{e:competitor-in-strip-perimeter}; notice that the inequality in \eqref{e:competitor-in-strip-perimeter} a priori might be strict; for instance, there might be points in $K\cap E^{(1)}$ at which $\theta=2$ or points in $K\cap E^{(0)}$  at which one of the connected components of $\widetilde\Omega\setminus(K\cup E)$ have density one. We also notice that by the same argument 
$$\H^{d-1}(\widetilde\Omega\cap\partial^\ast S_+)+\H^{d-1}(\widetilde\Omega\cap\partial^\ast S_-)\le \mathcal F(K',E'),$$
where the couple $(K',E')$ is defined as follows: 
\begin{equation}
    E'=\widetilde\Omega\setminus(\overline S_+\cup \overline S_-)\qquad\text{and}\qquad K'=\widetilde\Omega\cap\big(\partial S_+\cup\partial S_-\big)\,.
\end{equation}
and that again the above inequality might be strict.
\\
\\
\noindent{\it Step 3.} Directly from \eqref{e:competitor-in-strip-perimeter}, we get 
\begin{align*}
\mathcal F(K,E)+\Lambda|{\rm Vol}(E)-\eps|&\ge \H^{d-1}(\widetilde\Omega\cap\partial^\ast S_+)+\H^{d-1}(\widetilde\Omega\cap\partial^\ast S_-)+\sum_{j=1}^{+\infty}\H^{d-1}(\widetilde\Omega\cap\partial^\ast S_j)\\
&\qquad+\Lambda\Big|{\rm Vol}\left(\widetilde\Omega\setminus(\overline S_+\cup \overline S_-)\right)-\sum_{j=1}^{+\infty}{\rm Vol}(S_j)-\eps\Big|\\
&\ge \mathcal{G}(S_+, S_-)+\sum_{j=1}^{+\infty}\H^{d-1}(\widetilde\Omega\cap\partial^\ast S_j)-\Lambda \sum_{j=1}^{+\infty}{\rm Vol}(S_j),
\end{align*}
where
\begin{equation}
    \label{e:def_G}
    \mathcal{G}(S_+, S_-):=\H^{d-1}(\widetilde\Omega\cap\partial^\ast S_+)+\H^{d-1}(\widetilde\Omega\cap\partial^\ast S_-)+\Lambda|{\rm Vol}(\widetilde\Omega\setminus(S_+\cup S_-))-\eps|.
\end{equation}
By the relative isoperimetric inequality in $\Omega$, we have that for all (non-empty) $S_j$
\begin{align*}
\H^{d-1}(\widetilde\Omega\cap\partial^\ast S_j)-\Lambda {\rm Vol}(S_j)=\H^{d-1}(\Omega\cap\partial^\ast S_j)-\Lambda {\rm Vol}(S_j) > 0,
\end{align*}
hence we obtain
\begin{align*}
\mathcal F(K,E)+\Lambda|{\rm Vol}(E)-\eps| \geq \mathcal{G}(S_+, S_-),
\end{align*}
where $\mathcal G$ is defined in \eqref{e:def_G}.
\\
\\
\noindent{\it Step 4. We show that $\mathcal G$ admits a minimizer.} Consider the following variational problem of minimizing
$$\mathcal{G}(U_+, U_-):=\H^{d-1}(\widetilde\Omega\cap\partial^\ast U_+)+\H^{d-1}(\widetilde\Omega\cap\partial^\ast U_-)+\Lambda|{\rm Vol}(\widetilde\Omega\setminus(U_+\cup U_-))-\eps|$$
among all disjoint sets $U_+$, $U_-$ of finite perimeter such that $\Omega_+\subset U_+$ and $\Omega_-\subset U_-$, where $\Omega_+$ and $\Omega_-$ are defined in \eqref{e:omega+} and \eqref{e:omega-} respectively. The existence of a minimizer $(A_+,A_-)$ follows by classical arguments for set of finite perimeters. Moreover, the sets $A_\pm$ are open and there is a constant $c\in(0,1)$ such that the following density estimate holds for $A_+$ and $A_-$ separately:
\begin{equation}
    \label{e:density_est_A+A-}
    c\,{\rm Vol}(B_r)\le {\rm Vol}(A_\pm\cap B_r(x_0))\le (1-c){\rm Vol}(B_r),
\end{equation}
for every $x_0\in\Omega\cap\partial A_\pm$ and every $r\in(0,1)$. In particular, up to a $\H^{d-1}$ negligible set, we have:
\begin{equation}
\label{e:bordi_ridotti}
    \partial A_\pm=\partial^\ast A_\pm=A_\pm^{(\sfrac12)}\qquad\text{and}\qquad (A_\pm)^{(0)}\cap\partial A_\pm=(A_\pm)^{(1)}\cap\partial A_\pm=\emptyset.
\end{equation}
We next define the couple $(K'',E'')$ as follows: 
\begin{equation*}
    E''=\widetilde\Omega\setminus(\overline A_+\cup \overline A_-)\qquad\text{and}\qquad K''=\widetilde\Omega\cap(\partial A_+\cup\partial A_-)\,,
\end{equation*}
and we notice that $K''$ is $(d-1)$-rectifiable, $\partial E''\subset K''$ and $K''$ is $\mathcal C$-spanning (in particular, also $K''\cup E''$ is $\mathcal C$-spanning).
We claim that 
\begin{equation}
    \label{e:uguaglianza_funzionale}
    \H^{d-1}(\widetilde\Omega\cap\partial^\ast A_+)+\H^{d-1}(\widetilde\Omega\cap\partial^\ast A_-)=\mathcal F(K'',E'').
\end{equation}
In order to show the above identity, we need to analyze different cases
\begin{enumerate}
    \item By \eqref{e:bordi_ridotti}, we have that $$(E'')^{(\sfrac12)}\cap A_\pm^{(1)}=\emptyset\ \qquad\text{and}\qquad (E'')^{(\sfrac12)}\cap A_\pm^{(0)}=\emptyset;$$
    since $\widetilde\Omega=A_+\cup A_-\cup E''$ up to a set of Lebesgue measure zero, we have that
    \begin{align*}
    \H^{d-1}(\widetilde\Omega \cap \pa^\ast E'') &=\H^{d-1}(\widetilde\Omega\cap\partial^\ast A_+\cap \pa^\ast E'')+\H^{d-1}(\widetilde\Omega\cap \pa^\ast E''\setminus\partial^\ast A_+)\\
    &=\H^{d-1}(\widetilde\Omega\cap\partial^\ast A_+\cap \pa^\ast E'')+\H^{d-1}(\widetilde\Omega\cap \pa^\ast E''\cap\partial^\ast A_-)\\
    &=\H^{d-1}(\widetilde\Omega\cap\partial^\ast A_+\setminus \pa^\ast A_-)+\H^{d-1}(\widetilde\Omega\cap \pa^\ast A_-\setminus\partial^\ast A_+).
    \end{align*}
    \item We next analyze the set $K''\setminus\partial^\ast E''$. We have that 
    $$K''\setminus\partial^\ast E''\subset (E'')^{(0)},$$
    up to a $\H^{d-1}$ negligible set; moreover, if $x_0 \in (E'')^{(0)}\cap K''$, then by \eqref{e:bordi_ridotti}, $x_0 \in (A_+)^{(\sfrac{1}{2})}\cap (A_-)^{(\sfrac{1}{2})}$. Thus
    $$\H^{d-1}(K''\setminus \partial^\ast E'')=\H^{d-1}((E'')^{(0)}\cap K'')=\H^{d-1}(\widetilde\Omega \cap \left(\pa^\ast A_+\cap \pa^\ast A_-\right)).$$
\end{enumerate}
In conclusion, putting together the points 1 and 2 above, we get
    $$
    \begin{aligned}
        \H^{d-1}(\widetilde\Omega \cap \pa^\ast E'') + 2 \H^{d-1}(K''\setminus \pa^\ast E'')= \H^{d-1}(\widetilde\Omega\cap\partial^\ast A_+)+\H^{d-1}(\widetilde\Omega\cap\partial^\ast A_-),
    \end{aligned}
    $$
    which is precisely \eqref{e:uguaglianza_funzionale}.
    \\
    \\
    {\it Step 5.}
By Step 3 and Step 4, we have:
\begin{align*}
\mathcal F(K,E)+\Lambda|{\rm Vol}(E)-\eps| &\geq \mathcal{G}(S_+, S_-)\\
&\geq \mathcal{G}(A_+, A_-)\\
&=\mathcal F(K'',E'')+\Lambda|{\rm Vol}(E'')-\eps|.
\end{align*}
By minimality of $(K,E)$ given by \cref{prop:esistenza_FF}, we get 
$$
\mathcal F_{\Lambda,\eps}(K,E) = \mathcal F_{\Lambda,\eps}(K'', E''),
$$
and thus that 
$$\mathcal F(K,E)+\Lambda|{\rm Vol}(E)-\eps| =\mathcal{G}(S_+, S_-)\qquad\text{and}\qquad \mathcal{G}(S_+, S_-)= \mathcal{G}(A_+, A_-).$$
By Step 3, the first equality above implies that $\widetilde\Omega\setminus (K\cup E)=S_+\cup S_-$, while the second one gives that $(S_+,S_-)$ minimizes $\mathcal G$ among all disjoint sets $U_+$, $U_-$ such that $\Omega_+\subset U_+$ and $\Omega_-\subset U_-$; in particular, $S_+$ and $S_-$ satisfy the density estimates \eqref{e:density_est_A+A-} and it holds: 
$$\mathcal F(K,E)=\H^{d-1}(\widetilde\Omega\cap \partial^\ast S_+)+\H^{d-1}(\widetilde\Omega\cap \partial^\ast S_-),$$
we have the identities $K=\widetilde\Omega\cap\left(\partial S_+\cup \partial S_-\right)$ and $E=\widetilde\Omega\setminus(\overline S_+\cup\overline S_-)$ and this concludes the proof.
\end{proof}

Once we have introduced $S_+$ and $S_-$, in the following Lemma, we provide their additional regularity properties.

\begin{lemma}
\label{lemma:pernonesseretroppolunghi}
Let $d\ge 3$. Then, there is $\eps_0$ such that for every $\eps\le \eps_0$ the following holds.
Let $S_+$ and $S_-$ be the open connected disjoint sets from \cref{prop:lunga!!!}. Then
\begin{enumerate}[\rm(i)]
\item $\overline S_+\subset\{x_d>0\}$ and $\overline S_-\subset\{x_d<0\}$; 
\item $K_+:=\widetilde\Omega \cap \partial S_+$ and $K_-:=\widetilde\Omega \cap \partial S_-$ (oriented oppositely with respect to $\partial E$) have constant mean curvature $\lambda$;
\item $K_+$ and $K_-$ are orthogonal to $\partial \Omega$ in viscosity sense.
\end{enumerate}
\end{lemma}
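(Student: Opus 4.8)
I would begin from the variational characterization of \cref{prop:lunga!!!}: the pair $(S_+,S_-)$ minimizes $\mathcal G$ among all disjoint finite-perimeter sets $U_\pm\supset\Omega_\pm$, and $S_\pm$ satisfy the uniform density estimates \eqref{e:density_est_A+A-}. Freezing $S_-$ and varying $S_+$ shows that $S_+$ is an almost perimeter minimizer in $\widetilde\Omega$ with obstacle $\overline{S_-}$; hence $\widetilde\Omega\cap\partial S_+$ is $C^{1,\alpha}$-regular, with generalized mean curvature vector $\lambda\,\nu_E$ on the portion $\widetilde\Omega\cap\partial^*E\cap\partial S_+$ and $0$ on the coincidence set $\widetilde\Omega\cap\partial S_+\cap\partial S_-$, where $\lambda>0$ is the constant of \cref{lemma:stime_curvature} (positive by \cref{thm:lambda>0}) and $\nu_E=-\nu_{S_+}$; symmetrically for $S_-$. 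We also recall $0<|E|\le\eps$.

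To prove (i) I would slide the (minimal) hyperplanes $P_\tau:=\{x_d=\tau\}$ upward from $\tau=-\infty$. By \cref{l:density-estimate}, $\overline{S_+}$ is compact, so $\tau_*:=\min_{\overline{S_+}}x_d$ is finite, and $P_{\tau_*}$ touches $\overline{S_+}$ from below at some $\overline x\in\partial S_+$, with $\overline{S_+}\subset\{x_d\ge\tau_*\}$ near $\overline x$. Assume for contradiction $\tau_*\le0$. If $\overline x\in\Omega\cap\partial^*E$, then near $\overline x$ the lower profile $v$ of $\overline{S_+}$ solves $-\diver(\nabla v/\sqrt{1+|\nabla v|^2})=\lambda>0$, lies above, and touches the flat solution $\zeta\equiv\tau_*$ of the same equation with right-hand side $0$; this contradicts Sch\"atzle's strong maximum principle \cite[Theorem 6.2]{schatzle2004quadratic} exactly as in the proof of \cref{thm:lambda>0}. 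If $\overline x\in\Omega$ is a point of the coincidence set, the common surface $\partial S_+=\partial S_-$ is minimal, touches $P_{\tau_*}$ from above, and hence — by the same strong maximum principle — coincides with $P_{\tau_*}$ near $\overline x$; so $K$ contains a flat (multiplicity two) disc at height $\tau_*$ with $S_+$ above and $S_-$ below, and one contradicts the minimality of $(S_+,S_-)$ by a competitor that translates/opens this flat piece, using $|E|>0$ and that the slab cross-sections $\widetilde\Omega\cap\{x_d=c\}$ shrink strictly as $|c|\to0$. If $\overline x\in\partial\Omega=\partial W$, then $x_d(\overline x)=\tau_*\le0$ and a blow-up at $\overline x$ as in the proof of \cref{lemma:con_bordo} produces a nontrivial stationary cone in the half-space $\mathbb H$ bounded by $T_{\overline x}\partial W$, with contact angle $\sfrac{\pi}{2}$ and supported in the wedge between $T_{\overline x}\partial W$ and $P_{\tau_*}$; since $\nu_W(\overline x)\cdot e_d<0$ on the lower half of $\partial W$, this wedge is acute and the comparison vector field of \cref{lemma:con_bordo} forces the cone to be trivial — impossible. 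Hence $\tau_*\ge0$, i.e.\ $\overline{S_+}\subset\{x_d\ge0\}$ and, symmetrically, $\overline{S_-}\subset\{x_d\le0\}$. To upgrade to $\overline{S_+}\subset\{x_d>0\}$: an interior contact with $\{x_d=0\}$ is excluded as above by the strong maximum principle, while a contact point on $\partial W$ necessarily lies on the ``equator'' $\partial W\cap\{x_d=0\}$, where $\nu_W$ is horizontal, so the orthogonality in (iii) reduces there to a homogeneous Neumann condition and Hopf's boundary point lemma contradicts $\lambda>0$.

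For (iii) I would use that $V$ has contact angle $\sfrac{\pi}{2}$ along $\cl(K)\cap\partial\Omega$ in the distributional sense (following \cite{kagayatone}, as recalled after \eqref{eq:H-curvatura}); combined with \cref{lemma:con_bordo} and its mirror statement for test surfaces touching $K$ from below — whose blow-up analyses rule out contact angle $<\sfrac{\pi}{2}$ and $>\sfrac{\pi}{2}$ respectively — this pins the tangent cone of $K_\pm$ at every boundary point to be orthogonal to $\partial\Omega$, which is the viscosity orthogonality. For (ii): by (i), $\overline{S_+}$ and $\overline{S_-}$ lie in disjoint open half-spaces, so $\partial S_+\cap\partial S_-=\emptyset$; therefore the coincidence set is empty, $K=K_+\sqcup K_-$ with $K_\pm=\widetilde\Omega\cap\partial S_\pm\subset\widetilde\Omega\cap\partial^*E$, and near each of its points $K_+$ is the reduced boundary of the obstacle-free almost minimizer $S_+$. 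Hence $K_+$ is $C^{1,\alpha}$ — and, bootstrapping the constant mean curvature equation by Schauder, smooth — with constant mean curvature $\lambda$ with respect to $\nu_E=-\nu_{S_+}$, i.e.\ with the orientation opposite to $\partial S_+$; the same holds for $K_-$.

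I expect the main obstacle to be the analysis of touching points on $\partial\Omega=\partial W$: flat hyperplanes are not admissible test surfaces there in the classical sense, so one must carry out the blow-up of \cref{lemma:con_bordo} and exploit that $\partial W$ is strictly convex in the $e_d$-direction along the lower half of the tube. Tightly linked to this, and also delicate, is the exclusion of coincidence (collapsed) pieces of $\partial S_+$ and $\partial S_-$, which is precisely the non-collapsing content of the lemma and which relies on the strong maximum principle (forcing flatness of such a piece) together with the geometry of $\widetilde\Omega$ and the strict positivity $|E|>0$.
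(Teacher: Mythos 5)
Your overall architecture (slide horizontal planes for (i), then deduce (ii) from the emptiness of the coincidence set, and get (iii) from one-sided almost-minimality in the half-spaces) matches the paper, but the two cases that carry the real content of the lemma are not handled correctly. First, touching points of the sliding plane on the coincidence set $\partial S_+\cap\partial S_-$: there the generalized mean curvature is $0$, so Sch\"atzle's principle only gives \emph{local} coincidence with the plane, and your proposed competitor that ``translates/opens this flat piece'' is not a construction — opening a multiplicity-two flat disc into two separated sheets strictly increases area, and a local translation of a piece of $K$ is not an admissible competitor. Worse, for the strict inclusions $\overline S_\pm\subset\{\pm x_d>0\}$ (which is exactly the non-collapsing statement you later use to kill the coincidence set in (ii)), a contact at height exactly $x_d=0$ pits a minimal surface against the minimal plane, so no maximum-principle or Hopf-type argument can give a contradiction there. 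The paper handles this with a step you do not have: propagate flatness with Sch\"atzle to the whole horizontal slice, and at height zero conclude that $D_0\subset K\setminus\partial^\ast E$, which forces $\F(K,E)\ge 2\H^{d-1}(D_0)+C_1\,{\rm Vol}(E)^{\sfrac{d}{d-1}}$ by the relative isoperimetric inequality, contradicting the upper bound $2\H^{d-1}(D_0)+C_2\eps^2$ of \cref{prop:esistenza_FF} for $\eps$ small. This quantitative energy comparison (and, at negative heights, the explicit cut-and-compensate competitor $F_\xi=S_+\cap\{x_d>c+\xi\}$, with the divergence theorem for $X=e_d$ and a volume adjustment near a point of $\partial^\ast E$) is what actually closes these cases; it cannot be replaced by sliding arguments alone.

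Second, boundary contact points on $\partial\Omega$. Your blow-up argument assumes a stationary cone with contact angle $\sfrac{\pi}{2}$ supported in the wedge between $T_{\overline x}\partial W$ and $P_{\tau_*}$, but the blow-up of the full varifold $V$ need not lie in that wedge (sheets of $\partial S_-$ may accumulate at $\overline x$ from below the plane), while the varifold of $\partial S_+$ alone is not known to be stationary: because of the disjointness constraint with $S_-$ (and the volume term) only one-sided, ``upward'' variations are admissible, so you only get a variational inequality. The paper either avoids blow-ups at such points entirely (the cut competitor above) or, at equatorial points $\partial W\cap\{x_d=0\}$, exploits precisely this one-sided stationarity — using that $S_-\subset\{x_d<0\}$ is already known — together with a sliding ball of radius $(d-1)/\lambda$ touching from below; your proposal does not address the one-sidedness at all. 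Finally, your appeal to Hopf's boundary point lemma at an equatorial contact presupposes $C^{1}$ regularity of $K_+$ up to the fixed boundary, which is only established later (\cref{t:reg-C1alpha}, via Allard-type boundary regularity) and indeed relies on the present lemma; invoking (iii) inside the proof of the strict part of (i) compounds this circularity. Parts (ii) and (iii) are fine modulo these gaps, but as written the proof of (i) — and hence of the lemma — is incomplete.
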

\begin{proof} We divide the proof in some steps.
\\
\\
\noindent{\it Step 1. We prove the regularity of $S_+$ and $S_-$.} Let $B_r(x_0)$ be a ball contained in $\Omega$ and let $V_+$ and $V_-$ be minimizers of the variational problems
$$V_\pm=\argmin\Big\{\H^{d-1}(\pa^\ast U_\pm \cap B_r(x_0))\ :\ {\rm Per }(U_\pm)<+\infty\,;\quad U_\pm=S_\pm\ \text{in}\ \widetilde\Omega\setminus B_r(x_0)\Big\}.$$
Then, $V_+$ and $V_-$ are disjoint sets. On the other hand, the minimality of $(S_+,S_-)$ yields:
$$\H^{d-1}(\pa^\ast S_+ \cap B_r(x_0))+\H^{d-1}(\pa^\ast S_- \cap B_r(x_0))\le \H^{d-1}(\pa^\ast V_+ \cap B_r(x_0))+\H^{d-1}(\pa^\ast V_- \cap B_r(x_0))+\Lambda|B_r|,$$
which implies that separately $S_+$ and $S_-$ are $(\Lambda,r_0)$-minimizers (for details we refer to \cite[Chapter 21]{maggi_book}) in the sense that 
$$\H^{d-1}(\pa^\ast S_\pm \cap B_r(x_0))\le \H^{d-1}(\pa^\ast U_\pm \cap B_r(x_0))+\Lambda|B_r|,$$
for every $U_\pm$ of finite perimeter such that $U_\pm=S_\pm$ outside $B_r(x_0)$. In particular, there are sets $\Sigma_\pm\subset \widetilde\Omega\cap\partial S_\pm=: K_\pm$, relatively closed in $\Omega$ and of Hausdorff dimension at most $d-8$, such that $K_\pm\setminus\Sigma_\pm$ is $C^{1,\alpha}$ manifold. 
\\
\\
\noindent{\it Step 2. $ S_+\subset\{x_d>0\}$ and $S_-\subset\{x_d<0\}$.}
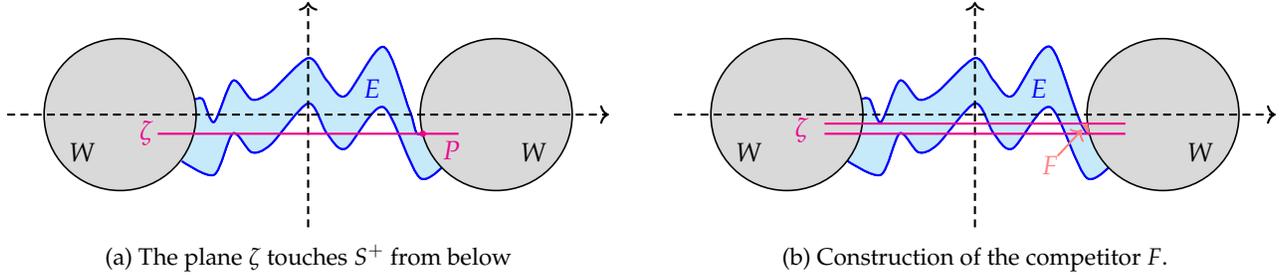
\begin{figure}[htbp]
	\begin{subfigure}{.5\linewidth}
		\centering
		\begin{tikzpicture}[rotate=0, scale= 0.5]
\draw [thick, color=blue, name path=s] plot [smooth] coordinates {(-3,0.4) (-2.8,0.4) (-2.5,-0.2) (-2,0.9) (-1.5, 0.4)(-1,0.6) (0,1.5) (0.5, 0.9) (1,0.5) (2,1.8) (2.7, 0.1) (3,-0.5) (4,1)};
\draw [thick, color=blue, name path=t] plot [smooth] coordinates {(-3.355,-1.237) (-2.5,-1.6) (-2,-0.5) (-1.5, -1)(-1,-0.8) (0,0.3) (0.5, -0.5) (1,-0.9) (2,0.2) (3,-1.7) (4,-0.9)};
\begin{scope}[transparency group]
\tikzfillbetween[of=s and t] {color=cyan!20};
\end{scope}
\draw [thick, color=blue] plot [smooth] coordinates {(-3,0.4) (-2.8,0.4) (-2.5,-0.2) (-2,0.9) (-1.5, 0.4)(-1,0.6) (0,1.5) (0.5, 0.9) (1,0.5) (2,1.8) (2.7, 0.1) (3,-0.5) (4,1)};
\draw [thick, color=blue] plot [smooth] coordinates {(-3.355,-1.237) (-2.5,-1.6) (-2,-0.5) (-1.5, -1)(-1,-0.8) (0,0.3) (0.5, -0.5) (1,-0.9) (2,0.2) (3,-1.7) (4,-0.9)};
\draw[draw=none, fill=white] (5,0) circle [radius = 2cm];
\draw[draw=none, fill=white] (-5,0) circle [radius = 2cm];
\draw[very thick, color=black, name path=a] (-5,0) circle [radius=2];
\draw[very thick, color=black, name path=b] (5,0) circle [radius=2];
\tikzfillbetween[of=a and b] {color=gray!30};
\draw [color = black] node at (6,-1) {$W$};
\draw [color = black] node at (-6,-1) {$W$};
\draw [color = blue] node at (1.7,0.7) {$E$};
\draw[thick, densely dashed, color=black,->] (-8,0) -- (8,0);
\draw[thick, densely dashed, color=black,->] (0,-3) -- (0,3);
\draw[thick, color=magenta] (-4,-0.51) -- (4,-0.51);
\filldraw [magenta] (3.05,-0.51) circle (2pt);
\draw [color = magenta] node at (-4.3,-0.5) {$\zeta$};
\draw [color = magenta] node at (3.8,-0.96) {$P$};
\end{tikzpicture}
		\caption{The plane $\zeta$ touches $S^+$ from below}
		\label{fig:piano_tocca_S+}
	\end{subfigure}
 \begin{subfigure}{.5\linewidth}
		\centering
		\begin{tikzpicture}[rotate=0, scale= 0.5]
\draw [thick, color=blue, name path=s] plot [smooth] coordinates {(-3.05,0.4) (-2.8,0.4) (-2.5,-0.2) (-2,0.9) (-1.5, 0.4)(-1,0.6) (0,1.5) (0.5, 0.9) (1,0.5) (2,1.8) (3,-0.5) (4,1) (5,0.5)};
\draw [thick, color=blue, name path=t] plot [smooth] coordinates {(-3.39,-1.23) (-2.5,-1.6) (-2,-0.5) (-1.5, -1)(-1,-0.8) (0,0.3) (0.5, -0.5) (1,-0.9) (2,0.2) (3,-1.7) (4,-0.9)};
\begin{scope}[transparency group]
\tikzfillbetween[of=s and t] {color=cyan!20};
\end{scope}
\draw [thick, color=blue] plot [smooth] coordinates {(-3.05,0.4) (-2.8,0.4) (-2.5,-0.2) (-2,0.9) (-1.5, 0.4)(-1,0.6) (0,1.5) (0.5, 0.9) (1,0.5) (2,1.8) (3,-0.5) (4,1) (5,0.5)};
\draw [thick, color=blue] plot [smooth] coordinates {(-3.39,-1.23) (-2.5,-1.6) (-2,-0.5) (-1.5, -1)(-1,-0.8) (0,0.3) (0.5, -0.5) (1,-0.9) (2,0.2) (3,-1.7) (4,-0.9)};
\draw[draw=none, fill=white] (5,0) circle [radius = 2cm];
\draw[draw=none, fill=white] (-5,0) circle [radius = 2cm];
\draw[very thick, color=black, name path=a] (-5,0) circle [radius=2];
\draw[very thick, color=black, name path=b] (5,0) circle [radius=2];
\tikzfillbetween[of=a and b] {color=gray!30};
\draw [color = black] node at (6,-1) {$W$};
\draw [color = black] node at (-6,-1) {$W$};
\draw [color = blue] node at (1.7,0.7) {$E$};
\draw[thick, densely dashed, color=black,->] (-8,0) -- (8,0);
\draw[thick, densely dashed, color=black,->] (0,-3) -- (0,3);
\draw[thick, color=magenta] (-4,-0.24) -- (4,-0.24);
\draw[thick, color=magenta] (-4,-0.51) -- (4,-0.51);
\draw [color = magenta] node at (-4.6,-0.41) {$\zeta$};
\draw[draw=none, fill=red!50]  (2.83,-0.24) -- (3,-0.24) -- (3.05,-0.5) -- (2.96,-0.46) -- cycle;
\draw[thick, color =red!50, ->] (2.2,-1.1) -- (2.85,-0.4);
\draw [color = red!50] node at (2,-1.34) {$F$};
\end{tikzpicture}
		\caption{Construction of the competitor $F$.}
		\label{fig:costruzione_F}
	\end{subfigure}
	\caption{(a) Graphical sectional representation of the touching point $P \in \pa \Omega\cap\partial S_+$ from below by a horizontal plane $\zeta$. (b) Graphical sectional representation of the constraction of the competitor $F$ defined in \eqref{eq:def_F_insieme}.}
	\label{fig:non_toccco_il_bordo_S+}
\end{figure}
We will prove that $S_+\subset\{x_d>0\}$.
Let us consider a plane $\zeta:=\{x_d = c\}$ moved from $-\infty$ along the $x_d$-axis and we stop moving it until it touches a point $P \in \pa S_+$. We claim that $P\in\{x_d\ge 0\}$. Suppose by contradiction that $P\in\{x_d< 0\}$. We analyze different cases. 
\begin{enumerate}[{\rm (A)}]
\item Suppose that $P \in \pa \Omega\cap\partial S_+$. Then also $P \in\pa \Omega\cap {\rm cl}(K_+)$ (see for a graphical representation \cref{fig:piano_tocca_S+}). 
Hence, we {\em move a little bit} the plane inside $S_+$.  
For every $\xi>0$, we define (see \cref{fig:costruzione_F}) the open set 
\begin{equation}
\label{eq:def_F_insieme}
    F_\xi := S_+ \cap \left\{x_d > c + \xi\right\}.
\end{equation}
We will show that (after a modification in some small ball) the couple $(F_\xi,S_-)$ has strictly smaller energy than $(S_+,S_-)$.
Indeed, on one hand, by choosing $\xi_0$ small enough such that 
$${\rm Vol}(\Omega\cap\{c\le x_d\le c+\xi_0\})\le \frac{1}{2}{\rm Vol}(\widetilde\Omega\setminus(S_+\cup S_-)),$$
we know that there is a point $Q\in \partial^\ast E$ lying in $\Omega\cap \{x_d<c\}$ or in $\Omega\cap \{x_d>c+\xi_0\}$. In a neighborhood $B_r(Q)$, either of the following holds:
\begin{itemize}
\item $B_r(Q)\cap S_+=\emptyset$ and $K_-\cap B_r(Q)$  is a smooth manifold of negative constant mean curvature (directed outwards $S_-$);
\item $B_r(Q)\cap S_-=\emptyset$ and $K_+\cap B_r(Q)$  is a smooth manifold of positive constant mean curvature (directed inwards $S_+$).
\end{itemize}
By performing a smooth variation of $S_+$ or $S_-$ in $B_r(Q)$ (and leaving the other set invariant), we obtain a couple $(\widetilde S_+,\widetilde S_-)$ of disjoint open sets that coincide with $(S_+,S_-)$ outside $B_r(Q)$ and such that:
$$\begin{aligned}
    \H^{d-1}(B_r(Q)\cap\partial^\ast \widetilde S_+)+\H^{d-1}(B_r(Q)\cap\partial^\ast \widetilde S_-)&< \H^{d-1}(B_r(Q)\cap\partial^\ast S_+)+\H^{d-1}(B_r(Q)\cap\partial^\ast S_-),\\
    {\rm Vol}(B_r(Q)\setminus(\widetilde S_+\cup \widetilde S_-))&<{\rm Vol}(B_r(Q)\setminus(S_+\cup S_-)).
\end{aligned}$$
Consider the constant vector field $X=e_d$, we have that $X \cdot \nu_{\Omega} \geq 0$ in a neighborhood of $\overline  S_+\setminus F_\xi$, where $\nu_{\Omega}$ is the outer normal to the boundary of $\Omega$. Then, by the divergence theorem in $S_+\setminus F_\xi$, we obtain
$$
\begin{aligned}
0 = \int_{S_+\setminus F_\xi} \diver\, X \, dx &\ge  \int_{\widetilde\Omega\cap\partial S^+ \cap \{x_d<c+\xi\}} X \cdot \nu_{S^+}\, d\H^{d-1} + \int_{\{x_d=c+\xi\} \cap S^+} X \cdot e_d \, d\H^{d-1}\\
&\ge  -\H^{d-1}(\widetilde\Omega\cap\{x_d<c+\xi\}\cap\partial S^+) + \H^{d-1}(\widetilde\Omega\cap\{x_d=c+\xi\}\cap S_+)\\
&=-\H^{d-1}(\widetilde\Omega\cap\partial^\ast S^+) + \H^{d-1}(\widetilde\Omega\cap\partial^\ast F_\xi),
\end{aligned}
$$
where we denoted with $\nu_{S_+}$ the outer normal to the boundary of $S_+$. We  choose $\xi$ and the couple $(\widetilde S_+,\widetilde S_-)$ in such a way that 
$$ {\rm Vol}(B_r(Q)\setminus(S_+\cup S_-))-{\rm Vol}(B_r(Q)\setminus(\widetilde S_+\cup \widetilde S_-))={\rm Vol}(S_+\setminus F_\xi).$$
Now taking the competitor 
$$Z_-=\widetilde S_-\ ,\qquad Z_+=\widetilde S_+\cap \{x_d> c+\xi\},$$
and noticing that $Z_+=F_\xi$ outside the small ball $B_r(Q)$, we can conclude that 
$$\begin{aligned}
    \H^{d-1}(\widetilde\Omega\cap\partial^\ast  Z_+)+\H^{d-1}(\widetilde\Omega\cap\partial^\ast Z_-)&< \H^{d-1}(\widetilde\Omega\cap\partial^\ast S_+)+\H^{d-1}(\widetilde\Omega\cap\partial^\ast S_-),\\
    {\rm Vol}(\widetilde\Omega\setminus(Z_+\cup Z_-))&={\rm Vol}(\widetilde\Omega\setminus(S_+\cup S_-)),
\end{aligned}$$
which is a contradiction with the minimality of $(S_+,S_-)$.

\item Suppose that $P \in \widetilde\Omega \cap \pa S_+ \setminus \pa S_-=K_+\setminus K_-$. Since $S_+$ is an almost minimizer of the perimeter, we have that $K_+$ is a $C^{1,\alpha}$ graph in a neighbourhood of $P$. Now, from \cref{lemma:stime_curvature}, we deduce that $K_+$ is $C^\infty$ and has constant mean curvature $-\lambda<0$ in a neighborhood of $P$, which give a contradiction by the maximum principle. 
\item Suppose that $P \in \widetilde\Omega \cap \pa S^+\cap \pa S^-= K_+\cap K_-$. Also in this case $K_+$ and $K_-$ are $C^{1,\alpha}$ graphs with tangent plane $\{x_d=c\}$ at $P$. Moreover, the generalized mean curvature of $K_+$ in a neighbourhood of $P$ is given by 
\begin{equation}\label{e:gen-mean-curve-S+}
\lambda_+(x):=\begin{cases}
-\lambda\ &\text{ if }\  x\in\widetilde\Omega\cap\partial S_+\setminus\partial S_-\ ,\\
0\ &\text{ if }\  x\in\widetilde\Omega\cap\partial S_+\cap\partial S_-\ .
\end{cases}
\end{equation}
Again, by Sch\"atzle's maximum principle \cite[Theorem 6.2]{schatzle2004quadratic}, $\{x_d=c\} \subset K_+$, so there is second contact point $P_2\in \{x_d=c\}\cap\partial\Omega\cap \pa S_+$. As we showed in the case (A) above, this is not possible.
\end{enumerate} 
Similar conclusions can be provided for $S_-\subset\{x_d<0\}$.
\\
\\
{\it Step 3. We claim that $\partial S_+ \cap (\widetilde\Omega\cap\{x_d = 0\}) = \emptyset$ and $\partial S_- \cap (\widetilde\Omega\cap\{x_d = 0\}) = \emptyset$.} We analyze different cases.
\begin{enumerate}[{\rm (1)}]
   \item Suppose by contradiction that there is a point $x_0\in (\partial S_+ \setminus \partial  S_-)\cap (\{x_d = 0\}\cap\widetilde\Omega)$. 
As in case (B) of Step 2, we have that, in a neighborhood of $x_0$, $\partial S_+$ is a $C^\infty$ graph with constant mean curvature $-\lambda$, but this is a contradiction by the maximum principle. Thus, $(\partial S_+ \setminus \partial  S_-)\cap (\{x_d = 0\}\cap\widetilde\Omega)=\emptyset$.
\item Suppose by contradiction that there is a point $x_0\in (\partial S_+ \cap \partial  S_-)\cap (\{x_d = 0\}\cap\widetilde\Omega)$. Then, the plane $\{x_d=0\}$ touches $\partial S_+$ from below and $\partial S_-$ from above at $x_0$. As in case (C) of Step 2, this implies  $\partial S_+$ and $\partial S_-$ are both $C^{1,\alpha}$ graphs; moreover, in a neighbourhood of $x_0$, $\partial S_+$ has non-negative generalized mean curvature (see \eqref{e:gen-mean-curve-S+}), while $\partial S_-$ has non-positive generalized mean curvature. Applying again the maximum principle (see for instance \cite[Theorem 6.2]{schatzle2004quadratic}), we obtain that $(\{x_d=0\}\cap\widetilde\Omega)\subset \partial S_+$ and $(\{x_d=0\}\cap\widetilde\Omega)\subset \partial S_-$. Then, up to a $\H^{d-1}$-negligible set, we have that 
$$D_0=\{x_d=0\}\cap\widetilde\Omega\,\subset\,\partial^\ast S_+\cap \partial^\ast S_-=K\setminus\partial^\ast E.$$
In particular, this implies that 
$$\mathcal F(K,E)\ge 2\H^{d-1}(D_0)+\H^{d-1}(\widetilde\Omega\cap\partial^\ast E)\ge 2\H^{d-1}(D_0)+C_1\big({\rm Vol}(E)\big)^{\frac{d}{d-1}}=2\H^{d-1}(D_0)+C_1\eps^{\frac{d}{d-1}},$$
by the relative isoperimetric inequality in $\Omega$. On the other hand, we recall that (see \cref{prop:esistenza_FF}) the optimality of $(K,E)$ yields
$$\mathcal F(K,E)\le 2\H^{d-1}(D_0)+C_2\eps^2,$$
where $C_1$ and $C_2$ are constants depending on $\widetilde\Omega$. By choosing $\eps$ small enough, this leads to a contradiction.
\end{enumerate}
Thus $(\partial S_+\cap \partial S_-) \cap (\{x_d = 0\}\cap\widetilde\Omega)=\emptyset$, which concludes the proof of Step 3. In particular, this implies that $\widetilde\Omega\cap \partial^\ast S_+$ is a $C^\infty$ smooth manifold of constant mean curvature $\lambda$ and  $\partial^\ast S_+\cap \widetilde\Omega$ is a $C^\infty$ smooth manifold of constant mean curvature $-\lambda$, while the sets  $\widetilde\Omega \cap (\partial S_\pm\setminus \partial^\ast S_\pm)$ are relatively closed in $\widetilde\Omega$ and have Hausdorff dimension $d-8$; moreover, $S_+$ and $S_-$ are local $\Lambda$-minimizers of the perimeter respectively in $\{x_d>0\}\cap\widetilde\Omega$ and $\{x_d<0\}\cap\widetilde\Omega$. This concludes the proof of claim {\rm (ii)} of the present lemma.
\\
\\
\noindent{\it Step 4. We claim that $\partial S_+ \cap (\partial\Omega\cap\{x_d = 0\}) = \emptyset$ and $\partial S_- \cap (\partial\Omega\cap\{x_d = 0\}) = \emptyset$.} We will show that $\partial S_+ \cap (\partial\Omega\cap\{x_d = 0\}) = \emptyset$. 
Suppose that $x_0\in\partial S_+ \cap (\partial\Omega\cap\{x_d = 0\})$. Let $B_{r}'(y_0)$ be a $(d-1)$-dimensional ball in $\Omega\cap\{x_d=0\}$ touching $\partial\Omega\cap\{x_d=0\}$ at $x_0$. We set $R:=\sfrac{(d-1)}{\lambda}$, where $\lambda$ is the curvature of $\partial S_+$, and we consider the family of $d$-dimensional balls $B_{R}(y_0-te_d)$ for $t\in[\sqrt{R^2-r^2},R]$.
We decrease $t$ from $R$ towards $\sqrt{R^2-r^2}$ until $B_{R}(y_0-te_d)$ touches $\partial S_+$ from below at some point $z_t$. If $t >\sqrt{R^2-r^2}$, then the contact point $z_t$ has to lie in $\Omega\cap\{x_d>0\}$, but this leads to a contradiction by the maximum principle. 
Thus, the only possibility is that $t=\sqrt{R^2-r^2}$, which means that the ball $B_{R}(y_0-\sqrt{R^2-r^2}e_d)$ touches $\partial S_+$ from below at $x_0$. Let the planes $T_\Omega$ and $T_B$, and the set $T_+$, be the blow-ups in $x_0$ of $\partial\Omega$, $\partial B_{R}(y_0-\sqrt{R^2-r^2}e_d)$ and $S_+$. Since $S_-\subset \{x_d<0\}$ we know that $S_+$ is stationary with respect to vector fields pointing upwards; precisely, if $X\in C^1_c(\R^d;\R^d)$ is a vector field parallel to $\partial\Omega$ and such that $X\cdot e_d>0$, the optimality of $(S_+,S_-)$ gives that:
$$\mathcal G\big((Id+tX)(S_+),S_-\big)\ge \mathcal G\big(S_+,S_-)\quad\text{for every}\quad t>0.$$
Thus, if $\nu_{S_+}$ and $H_{S_+}$ are the normal to $\partial S_+$ pointing upwards and the generalized mean curvature of $S_+$, we have 
$$\int_{\widetilde\Omega\cap\partial S_+}H_{S_+}\cdot X\,d\H^{d-1}+\Lambda \int_{\widetilde\Omega \cap\partial S_+}\nu_{S_+}\cdot X\,d\H^{d-1}\ge 0.$$
In particular, by a change of coordinates to ensure that the vector field $X$ is always tangent along the blow-up sequence we obtain that 
$$\int_{\widetilde\Omega\cap\partial T_+}H_{T_+}\cdot X\,d\H^{d-1}\ge 0,$$
for every vector field $X\in C^1_c(\R^d;\R^d)$ parallel to $T_\Omega$ (that is, $X(T_\Omega)\subset T_\Omega$ in the blow-up limit) and pointing upwards (that is, $X\cdot e_d>0$). Since $T_+$ is contained in the wedge formed by $T_\Omega$ and $T_B$, this is a contradiction by deforming $T_+$ with a suitably chosen vector field as in \cref{lemma:con_bordo}.
A similar conclusion holds for $S_-$, thus $\partial S_- \cap (\partial\Omega\cap\{x_d = 0\}) = \emptyset$. In particular, this concludes the proof of claim {\rm (i)} of the lemma.
\\
\\
\noindent{\it Step 5. $K_+=\partial S_+\cap\widetilde\Omega$ and $K_-=\partial S_-\cap\widetilde \Omega$ are orthogonal to $\partial \Omega$ in viscosity sense.} The proof of claim {\rm (iii)} is immediate since both $S_+$ and $S_-$ are locally almost-minimizers for the relative perimeter in $\Omega\cap\{x_d>0\}$ and $\Omega\cap\{x_d<0\}$, respectively. Thus, for every vector fields $X\in C^{1}_c(\R^d;\R^d)$ such that $X\cdot \nu_\Omega$, the Euler-Lagrange equation ensures that both $K_+$ and $K_-$ meet $\pa \Omega$ with an angle of $\sfrac{\pi}{2}$.
\end{proof}

Finally, we can provide the uniform upper bound on the mean curvature $\lambda$ showing the following lemma.

\begin{lemma}
\label{l:upper_bound_curvatura_eps}
There are  $\eps_0 >0$ and $\Pi>0$ such that for all $\eps \leq \eps_0$ 
the following holds. 
Let $(K,E)$ be a minimizer of $\mathcal F_{\Lambda,\eps}$ given by \cref{prop:lunga!!!} with generalized mean curvature $\lambda$. Then
  \begin{equation}
    \label{eq:bound_curvature-upper}
   0<\lambda \leq \Pi\, \varepsilon.
\end{equation}
In particular ${\rm Vol}(E) = \eps$.
\end{lemma}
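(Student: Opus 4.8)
The plan is to turn the geometric information on $S_+$ from \cref{lemma:pernonesseretroppolunghi} into a pointwise lower bound for the height of $K_+$, and then integrate it against the volume constraint $|E|\le\eps$ coming from \cref{thm:lambda>0}. First I would record the structure available for $\eps$ small: by \cref{prop:lunga!!!}, \cref{lemma:pernonesseretroppolunghi}, together with the weak$^\ast$ convergence $\|V\|\weakstar 2\,\H^{d-1}\text{\Large$\llcorner$}D_0$ and Allard's $\eps$-regularity theorem, $K_+=\widetilde\Omega\cap\partial S_+$ and $K_-=\widetilde\Omega\cap\partial S_-$ are disjoint smooth graphs lying in $\{x_d>0\}$, resp.\ $\{x_d<0\}$, and over the interior of $D_0$ the function $u_+(z):=\phi^+(z)$ is smooth, positive, $<\sfrac{\delta}{5}$, and solves $-{\rm div}\big(\nabla u_+/\sqrt{1+|\nabla u_+|^2}\big)=\lambda$. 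Since $\overline S_-\subset\{x_d<0\}$ and $S_+$ lies above $K_+$, the subgraph $\{(z,t):z\in D_0,\ 0<t<u_+(z)\}$ is contained in $E$, so
\begin{equation*}
\eps\ \ge\ |E|\ \ge\ \int_{D_0}u_+(z)\,dz .
\end{equation*}

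The heart of the matter is the estimate $u_+\gtrsim\lambda$ on a definite portion of $D_0$, which I would obtain by sliding from below (i.e.\ from inside $E$, hence ``from outside $S_+$'') the large spherical caps that serve as exact barriers for the mean curvature operator. Concretely, for any ball $B_\rho(z_c)\subset D_0$ set $w(z):=\tfrac{\lambda}{2(d-1)}\big(\rho^2-|z-z_c|^2\big)$; a direct computation gives $-{\rm div}\big(\nabla w/\sqrt{1+|\nabla w|^2}\big)\le\lambda$ on $B_\rho(z_c)$ (here $d\ge3$ is used), while $w=0\le u_+$ on $\partial B_\rho(z_c)$. By the comparison principle for the prescribed mean curvature equation — equivalently, by sliding the graph of $w(\cdot)+c$ downward until it first touches $K_+$ and invoking Sch\"atzle's maximum principle as in \cref{thm:lambda>0} — we get $u_+\ge w$ on $B_\rho(z_c)$, in particular $u_+(z_c)\ge\tfrac{\lambda\rho^2}{2(d-1)}$. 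Choosing $\rho$ equal to a fixed fraction of the inradius of $D_0$ and letting $z_c$ run over the corresponding shrunken domain $D_0'\subset D_0$ (which has $\H^{d-1}$-measure bounded below by a constant depending only on $d$ and $\Gamma$), this yields $u_+\ge c_1\lambda$ on $D_0'$, whence
\begin{equation*}
\eps\ \ge\ \int_{D_0}u_+\ \ge\ c_1\,\lambda\,\H^{d-1}(D_0')\ =:\ \frac{\lambda}{\Pi},
\end{equation*}
which is \eqref{eq:bound_curvature-upper}. Finally, taking $\eps_0$ small enough that also $\Pi\eps_0<\Lambda$, \cref{lemma:stime_curvature} leaves only the alternative $|E|=\eps$: case (1) there is ruled out by $\lambda>0$ (\cref{thm:lambda>0}), and in case (2) one would have $\lambda=\Lambda>\Pi\eps\ge\lambda$, a contradiction.

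The step I expect to be delicate is the rigorous execution of the barrier argument: one must ensure that the competitor caps stay inside $\Omega$ and that the touching with $K_+$ occurs where $K_+$ is genuinely a single-sheeted graph solving the equation — which, upstream, requires knowing that for $\eps$ small $K_+$ is such a graph over a domain containing $D_0$, precisely the point where the weak$^\ast$ convergence to $D_0$ and Allard's theorem are needed. The algebraic inequality $-{\rm div}(\nabla w/\sqrt{1+|\nabla w|^2})\le\lambda$, the inclusion of the subgraph in $E$, and the lower bound on $\H^{d-1}(D_0')$ are all routine.
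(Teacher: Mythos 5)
Your overall strategy (subgraph volume $\int u_+ \lesssim \eps$ against a pointwise barrier bound $u_+\gtrsim \lambda$) is viable, but as written it has a genuine gap at its load-bearing step: the claim that for small $\eps$ the set $K_+=\widetilde\Omega\cap\partial S_+$ is a single-sheeted smooth graph over (the interior of) $D_0$ solving the constant mean curvature equation, so that the subgraph $\{(z,t):0<t<u_+(z)\}$ lies in $E$. At the point of the paper where this lemma sits, no such statement is available: \cref{prop:lunga!!!} and \cref{lemma:pernonesseretroppolunghi} only give that $S_\pm$ are $(\Lambda,r_0)$-minimizers with $\overline S_+\subset\{x_d>0\}$, $\overline S_-\subset\{x_d<0\}$, smooth CMC boundaries away from a codimension-$8$ set; nothing prevents, a priori, $\partial S_+$ from folding over a vertical line (several sheets over the same $z\in D_0$), in which case $\phi^+$ need not solve the equation and the vertical segment below the top sheet need not be contained in $E$, so both halves of your argument break. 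Worse, the route you invoke to fix this — Allard's regularity giving graphicality — is exactly the content of \cref{t:reg-C1alpha}, whose proof in the paper \emph{uses} the bound $\lambda\le\Pi\eps$ of \cref{l:upper_bound_curvatura_eps}; quoting it here is circular. Note also that Allard cannot be applied to the limit varifold $V$ directly (it converges to $D_0$ with multiplicity two, so densities are near $2$); one must work sheet by sheet with $\partial S_\pm$ and separately exclude doubling/folding of a single sheet (e.g.\ via the sharp area bound $\H^{d-1}(\partial S_+)\le \H^{d-1}(D_0)+C\eps^2$, the spanning property and the density estimates). This interior graphicality over compactly contained sub-balls of $D_0$ can indeed be proved with only $\lambda\le\Lambda$, so your plan is repairable, but that argument is a nontrivial piece of work which your proposal does not carry out and only gestures at. (Two smaller points: sliding the paraboloid \emph{downward} from above gives a touching from above, where $H_w\le\lambda$ yields no contradiction — the correct mechanism is the comparison-principle/touching-from-below formulation you also state, with $\theta w$, $\theta<1$, to avoid the equality case at the vertex; and at a touching point one must know the curvature there is $\lambda$ and not $0$, i.e.\ no collapsing, which for small $\eps$ does follow from $K_+\cap K_-=\emptyset$ in \cref{lemma:pernonesseretroppolunghi}.)

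For comparison, the paper's proof is designed precisely to avoid needing any graphicality. It fixes a $(d-1)$-ball $B_r'\subset D_0$, chooses $h$ with $\eps=\omega_{d-1}(r/2)^{d-1}h/2$ and $R=(r^2+h^2)/(2h)$, and slides the large ball $B_R(-te_d)$ upward; since the cap $B_R(-(R-h)e_d)\cap\{x_d>0\}$ has volume $>\eps$ while $(\widetilde\Omega\cap\{x_d>0\})\setminus S_+\subset E$ has volume $\le\eps$ (this inclusion is immediate from \cref{prop:lunga!!!}, no graph structure needed), the sphere must touch $\partial S_+$ from below at an interior point before $t$ reaches $R-h$, and the maximum principle there gives $\lambda\le (d-1)/R\le\Pi\eps$. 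So the paper trades your pointwise height estimate for a single volume-counting touching argument with one huge sphere, which only requires smoothness of $\partial S_+$ near a one-sided touching point; this is why the lemma can precede the regularity section. If you want to keep your route, you must first supply the interior single-sheetedness statement independently of \cref{t:reg-C1alpha}.
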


\begin{proof}
By \cref{thm:lambda>0}, we already know that $\lambda>0$ so we only need to prove the second inequality in \eqref{eq:bound_curvature-upper}. Let $B_r'(x_0)$ be a $(d-1)$-dimensional ball contained in $\widetilde\Omega\cap \{x_d=0\}$; for simplicity we set $x_0=0$. For every $\eps>0$ we choose $h>0$ such that 
$$\eps=\omega_{d-1}\left(\frac{r}{2}\right)^{d-1}\frac{h}{2},$$
and we define 
$$R:=\frac{1}{2h}(r^2+h^2),$$
so that the ball $B_R(-(R-h)e_d)$ contains the $(d-2)$-sphere $\partial B_r'$. Consider the family of $d$-dimensional balls $B_R(-te_d)$ for $t\in [R-h;R]$. When $t=R$, the ball $B_R(-R e_d)$ is contained in $\{x_d<0\}$. We start decreasing $t$ from $R$ to $R-h$. We notice that by construction 
$$B_{\sfrac{r}{2}}'\times\big(0,\sfrac{h}{2}\big)\subset B_R(-(R-h)e_d)\cap\{x_d>0\},$$
so that (by the choice of $h$)
$${\rm Vol}\Big(B_R(-(R-h)e_d)\cap\{x_d>0\}\Big)>\eps.$$
On the other hand 
$$\big(\widetilde\Omega\cap\{x_d>0\}\big)\setminus S_+\subset E\qquad\text{and so}\qquad {\rm Vol}\Big(\big( \widetilde\Omega\cap\{x_d>0\}\big)\setminus S_+\Big)<\eps.
$$
Thus, there is $t\in(R-h,R)$ for which $\partial B_R(-te_d)$ touches from below $\partial S_+$ at some point $x_0\in \partial B_R(-te_d)\cap \partial S_+$. Then $\partial S_+$ is a smooth graph in a neighborhood of $x_0$ and by the maximum principle, we have that 
$$\lambda\le \frac{d-1}{R}=(d-1)\frac{2h}{r^2+h^2}<\frac{(d-1)2h}{r^2}=\frac{d-1}{\omega_{d-1}}\left(\frac{2}{r}\right)^{d+1}\eps=:\Pi \,\eps,$$ 
which concludes the proof of \eqref{eq:bound_curvature-upper}. In order to prove the last claim, we take $\eps<\Lambda/\Pi$. If $E$ is the minimizer of $\mathcal F_{\Lambda,\eps}$ such that ${\rm Vol}(E)<\eps$, by  \cref{lemma:stime_curvature}, we have that its generalized mean curvature $\lambda$ is equal to $\Lambda$, but this contradicts \eqref{eq:bound_curvature-upper}. 
\end{proof}

\begin{remark}
\label{rem:nocollapsing}
    An immediate consequence of \cref{prop:lunga!!!}, \cref{lemma:pernonesseretroppolunghi} and \cref{l:upper_bound_curvatura_eps} is that (when $\eps>0$ is small enough) for any minimizer $(K,E)$ of $\FF$, we have
    $$
    K = \Omega \cap \pa E, \qquad  \dim_{H}(\Omega\cap (\partial E\setminus\partial^\ast E))\le d-8, \qquad\text{and}\qquad {\rm Vol}(E) = \eps,
    $$
    ensuring that both exterior and interior collapsing do not occur and we are able to provide an upper estimate for the generalized mean curvature $\lambda$ which is uniform in the volume $\eps$, which is exactly \eqref{eq:bound_curvature-upper}.
\end{remark}

\medskip

In the following section, we consider a specific case: a circle contained in the plane $\bp$, showing that the foliation is made of spherical caps of curvature $\lambda$ touching the boundary of the tubular neighbourhood $\pa W$ forming an angle of $\sfrac{\pi}{2}$ and varying with respect to the contained volume $\eps$. 
 
\section{A particular case: the foliation on a  circle contained in \texorpdfstring{$\bp$}{bp}}
\label{sec:spherical_caps}

In this section, we show that if the planar curve is a circle, then minimizers are spherical caps. We present the result in $\R^3$ but it can be easily generalized in higher dimensions. First, we show that fixing a certain amount of volume $\eps$ and imposing that it touches the boundary forming an angle of $\sfrac{\pi}{2}$, then there exists a unique set $\mathcal B_k$ 
 whose boundary $\partial \mathcal B_{\kappa}\cap\widetilde\Omega$ is a the union of two disjoint spherical caps $SC_{\kappa}^+$ and $SC_{\kappa}^-$.
Second, in order to prove that the minimizer $E$ constructed above is exactly $\mathcal B_{\kappa}$, we adapt the maximum principle argument from the previous section. 

\medskip

We consider the curve $\Gamma$ as 
$$\Gamma=\partial B_1\cap\{x_3=0\}=\{(\cos \varphi,\sin\varphi, 0)\ :\ \varphi\in[0,2\pi]\}.$$
Given $\delta>0$, $\eps>0$ and $\widetilde\Omega$ as in \cref{definition:tilde}, we define 
$$
\mathcal{B}_{\kappa}:=\widetilde\Omega \cap B_{\sfrac2\kappa}(0,0,z_C) \cap B_{\sfrac2\kappa}(0,0,-z_C),
$$
whose (free) boundary is the union of two disjoint spherical caps
$$
\widetilde\Omega\cap\partial \mathcal{B}_{\kappa}= SC_\kappa^+ \cup SC_\kappa^-\,,
$$
where $(z_C,\kappa)$ is the unique couple of parameters in $(-\infty,0)\times (0,+\infty)$ for which:
%
%
\begin{itemize}
\item $SC_\kappa^+ \cup SC_\kappa^-$ has mean curvature $\kappa$;
\item $\displaystyle{\rm Vol}(\mathcal B_\kappa)=\eps$;
\item $SC_\kappa^+ \cup SC_\kappa^-$ meets $\partial\Omega$ orthogonally.
\end{itemize}
Precisely, we describe $SC_\kappa^+$ (and similarly $SC_\kappa^-$) in terms of a parameter $\theta\in(0,\sfrac{\pi}{2})$ as follows:
$${\rm cl}(SC_\kappa^{+})\cap\partial\Omega:=\Big\{\Big((1-\delta\cos\theta)\cos \varphi,(1-\delta\cos\theta)\sin\varphi, \delta\sin\theta\Big)\ :\ \varphi\in[0,2\pi]\Big\}.$$
The orthogonality condition is satisfied when $\theta$, $\kappa$ and $z_C$ are such that 
$$z_C=\frac{-\delta+\cos\theta}{\sin\theta} \qquad \hbox{ and } \qquad \frac1{\kappa}=\frac{1-\delta\cos\theta}{\sin\theta},$$
while the volume constraint holds if $\kappa$ and $\theta$ are chosen such that 
\begin{equation*}
\frac{\varepsilon}{2} 
= \vartheta \left(\kappa^{-2} - \delta^2\right) +\frac{\delta^2 \sin^2\vartheta - \left(\cos\vartheta  - \delta\right)^2}{\sin\vartheta\,\cos\vartheta}.
\end{equation*}
This determines uniquely $z_C$, $\theta$ and $\kappa$ in terms of $\eps$ and $\delta$. 


\begin{theorem}
\label{th:circonferenza3D}
  Given $\Gamma=\mathbb{S}^1$, $\eps$ and $\delta$, let $(K,E)$ be a minimizer of $\F$ with ${\rm Vol}(E) = \eps$ and mean curvature $\lambda$. Then $E=\mathcal B_\lambda$, where $\mathcal B_\lambda$ is as above. In particular, $\Omega\cap\partial E=SC_\lambda^+ \cup SC_\lambda^-$.
\end{theorem}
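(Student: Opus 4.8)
The plan is to combine the structural results of \cref{sec:bounds_sulle_curvature} with a sliding comparison against a one–parameter pencil of spherical caps that are orthogonal to $\pa W$. First I would record the starting point: for $\delta\le\delta_0$ and $\eps\le\eps_0$, a minimizer $(K,E)$ of $\F$ with ${\rm Vol}(E)=\eps$ is also a minimizer of $\FF$ for a suitable $\Lambda$ (the constrained and penalized problems being equivalent for such $\eps$, by \cref{l:upper_bound_curvatura_eps} and \cref{rem:nocollapsing}), so by \cref{prop:lunga!!!}, \cref{lemma:pernonesseretroppolunghi} and \cref{rem:nocollapsing} we may write $E=\widetilde\Omega\setminus(\overline S_+\cup\overline S_-)$, where $S_\pm$ are disjoint connected open sets with $\overline S_\pm\subset\{\pm x_3>0\}$, and $K_\pm:=\widetilde\Omega\cap\pa S_\pm$ are --- since $d=3$, so the potential singular set of dimension $d-8$ is empty --- smooth (up to $\pa W$) properly embedded surfaces of constant mean curvature $\lambda\in(0,\Pi\eps]$, meeting $\pa W$ orthogonally along $\overline{K_\pm}\cap\pa W$, with $K=K_+\cup K_-$. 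It then suffices to prove $K_+=SC_\lambda^+$, the case of $K_-$ following by the reflection $x_3\mapsto-x_3$; indeed $E$ is then exactly the region enclosed between the two caps, and since ${\rm Vol}(E)=\eps$ its parameters are forced to solve the volume equation, i.e. $E=\mathcal B_\lambda$.

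Next I would set up the comparison family. For $\theta\in[0,\theta^\ast]$, $\theta^\ast:=\arccos\delta$, let $SC^+(\theta)$ be the spherical cap of constant mean curvature $\kappa(\theta)$ centered on the $x_3$-axis whose boundary circle $\{|x'|=1-\delta\cos\theta\}\times\{x_3=\delta\sin\theta\}$ lies on $\pa W$ and which meets $\pa W$ orthogonally; thus $SC^+(0)=D_0$ and $SC_\lambda^+=SC^+(\theta_\lambda)$ for the unique $\theta_\lambda$ with $\kappa(\theta_\lambda)=\lambda$ (existence and uniqueness of $\theta_\lambda$ following since $\theta\mapsto\kappa(\theta)$ is continuous and strictly increasing on $(0,\theta^\ast)$ --- its derivative has the sign of $\cos\theta-\delta$ --- with $\kappa(0^+)=0$, while $\lambda\le\Pi\eps$ is small). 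An elementary computation with this pencil of spheres shows that, for $\delta\le\delta_0$, the caps $\{SC^+(\theta)\}_{0<\theta\le\theta^\ast}$ are pairwise disjoint, contained in $\overline{\widetilde\Omega}\cap\{x_3\ge0\}$, ordered (i.e. $SC^+(\theta)$ lies strictly below $SC^+(\theta')$ for $\theta<\theta'$), and together with $D_0$ they foliate a one-sided neighborhood $\mathcal R_+$ of $D_0$ that contains the closed strip $\{0\le x_3\le\sfrac{\delta}{5}\}\cap\overline{\widetilde\Omega}$. Write $\phi:\mathcal R_+\to[0,\theta^\ast]$ for the continuous map sending $x$ to the leaf through it. By \cref{lemma:Estanellastriscia} (with $\sigma=\sfrac{1}{5}$) one has $K\cup E\subset\mathcal T_{\delta/5}$, and by \cref{lemma:pernonesseretroppolunghi} $\overline{K_+}\subset\{x_3>0\}$; hence $\overline{K_+}\subset\mathcal R_+$ and $\phi$ is defined and continuous on the compact set $\overline{K_+}$.

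Then I would run the comparison. Let $\theta_-:=\min_{\overline{K_+}}\phi\le\theta_+:=\max_{\overline{K_+}}\phi$, attained at $x_-,x_+\in\overline{K_+}$. By the ordering of the foliation, $K_+$ lies between $SC^+(\theta_-)$ and $SC^+(\theta_+)$ and is tangent to $SC^+(\theta_+)$ from below at $x_+$ and to $SC^+(\theta_-)$ from above at $x_-$. If $x_+\in\widetilde\Omega$, then Sch\"atzle's strong maximum principle \cite[Theorem 6.2]{schatzle2004quadratic} --- applied to the two smooth constant–mean–curvature graphs $SC^+(\theta_+)\ge K_+$, exactly as in \cref{thm:lambda>0} and in case (B) of Step~2 of \cref{lemma:pernonesseretroppolunghi} --- forces $\kappa(\theta_+)\le\lambda$ (the upper of two tangent CMC graphs has the smaller mean curvature); if instead $x_+\in\pa W$, then $x_+$ lies on the common orthogonal edge of $K_+$ and $SC^+(\theta_+)$, and a Hopf-type comparison for CMC surfaces meeting $\pa W$ orthogonally --- as in \cref{lemma:con_bordo} and Step~4 of \cref{lemma:pernonesseretroppolunghi} --- gives $K_+=SC^+(\theta_+)$ in a neighborhood of $x_+$, hence $\kappa(\theta_+)=\lambda$. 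Symmetrically at $x_-$ one gets either $\kappa(\theta_-)\ge\lambda$ or $K_+=SC^+(\theta_-)$ near $x_-$. Using $\theta_-\le\theta_+$ and monotonicity of $\kappa$, we conclude $\lambda\le\kappa(\theta_-)\le\kappa(\theta_+)\le\lambda$, so $\kappa(\theta_-)=\kappa(\theta_+)=\lambda$, i.e. $\phi\equiv\theta_\lambda$ on $\overline{K_+}$ and $K_+\subseteq SC_\lambda^+$. Since $K_+$ is nonempty, relatively closed in $\widetilde\Omega$, and relatively open in the connected surface $SC_\lambda^+\cap\widetilde\Omega$ (two smooth surfaces tangent at each common point agree near it), $K_+=SC_\lambda^+$; reflecting in $\{x_3=0\}$ gives $K_-=SC_\lambda^-$, whence $\widetilde\Omega\cap\pa E=SC_\lambda^+\cup SC_\lambda^-$ and $E=\mathcal B_\lambda$, with the parameters of $\mathcal B_\lambda$ fixed by orthogonality and by ${\rm Vol}(E)=\eps$.

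The step I expect to be the main obstacle is the boundary alternative above: analyzing a contact point of the sliding cap with $K_+$ that lies on the curved wire $\pa W$, where one needs a blow-up/Hopf argument for two free-boundary CMC surfaces that are tangent at a boundary point and both meet $\pa W$ orthogonally --- precisely the phenomenon handled in \cref{lemma:con_bordo}, which relies on the curvature of $\pa W$ transverse to $\bp$. A secondary, purely computational point is verifying that the orthogonal caps $\{SC^+(\theta)\}$ genuinely foliate a one-sided neighborhood of $D_0$ and that $\overline{K_+}$ sits inside it once $\eps_0$ is chosen small.
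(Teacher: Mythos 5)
Your strategy is genuinely different from the paper's: you foliate by the one-parameter pencil of caps that are \emph{all} orthogonal to $\pa W$ and run an extremal-leaf argument, whereas the paper slides vertical translates $-te_d+SC_\kappa^{\pm}$ of the single cap whose curvature $\kappa$ is fixed by the volume $\eps$, and closes with the volume normalization ${\rm Vol}(E\cap\{x_3>0\})\ge\sfrac{\eps}{2}$ plus a reflection. The interior part of your argument (tangency at the extremal leaves, the comparison of mean curvatures of ordered tangent CMC graphs, monotonicity of $\kappa(\theta)$, and the open-closed argument giving $K_+=SC^+_{\theta_\lambda}$) is sound. The problem is exactly the step you flag: a contact point $x_+\in\pa W$ between $K_+$ and the extremal leaf $SC^+(\theta_+)$. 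There both surfaces meet $\pa W$ at the \emph{same} angle $\sfrac{\pi}{2}$, so what you need is an equality-case boundary strong maximum principle (a Serrin/Hopf boundary-point lemma for two tangent CMC graphs satisfying the same orthogonality condition on different boundary traces). \cref{lemma:con_bordo} does not provide this: it is a strict-angle statement, whose blow-up argument produces a contradiction only when the limit varifold is trapped in a wedge of opening strictly less than $\sfrac{\pi}{2}$ (i.e.\ when $\nu_\Omega\cdot\nu_\zeta<0$); in your situation the two tangent planes at $x_+$ coincide, the ``wedge'' degenerates to a half-plane, and the argument yields nothing (nor should it, since coincidence is possible). The same remark applies to Step~4 of \cref{lemma:pernonesseretroppolunghi}, which again exploits a genuine wedge. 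So the assertion ``$K_+=SC^+(\theta_+)$ in a neighborhood of $x_+$, hence $\kappa(\theta_+)=\lambda$'' is not justified by the results of the paper; it requires an additional boundary Hopf-type lemma for the capillary (Neumann) problem, which is classical in spirit but would have to be proved (e.g.\ by straightening $\pa W$ and applying the Hopf lemma to the difference of the two graphs, handling the fact that their boundary curves on $\pa W$ differ).

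For contrast, the paper's choice of sliding family is engineered precisely to avoid this equality case: a translated cap with $\bar t\neq 0$ meets $\pa\Omega$ at an angle strictly different from $\sfrac{\pi}{2}$, so a boundary contact with the (viscosity) orthogonal surface $K_\pm$ is excluded by the strict-angle argument of \cref{lemma:con_bordo}/\cref{lemma:pernonesseretroppolunghi}(iii); the only surviving boundary contact is at $\bar t=0$, where coincidence follows from the volume inclusion rather than from a boundary maximum principle, and interior contacts are handled by the interior comparison. If you either supply the equal-angle boundary point lemma, or replace your orthogonal pencil near the contact by a non-orthogonal perturbation (as the paper does), your proof goes through; as written, the boundary alternative is a genuine gap.
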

\begin{proof}
By the results from the previous section \cref{rem:nocollapsing}, we have that 
$$\Omega\cap \partial E=K_+\cup K_-,$$
where $K_+\subset\{x_d>0\}$ and $K_-\subset\{x_d<0\}$ are sets of constant mean curvature $\lambda>0$. Without loss of generality, we can assume that 
\begin{equation}\label{e:volume-E-+}
{\rm Vol}(\{x_3>0\}\cap E)\ge \frac{\eps}{2}\,.
\end{equation}
Let now $\kappa$ be chosen as above in such a way that $\mathcal B_\kappa$ contains volume $\eps$. 
 We divide the proof in some steps.
    \\
  \\
   
    \noindent {\it Step 1. $\kappa < \lambda$ or we have that $\{x_3>0\}\cap E=\{x_3>0\}\cap \mathcal B_\kappa$ (and $\kappa=\lambda$), see for graphical representation \cref{fig:SCK_toccaE_sopra}.} Suppose that $\kappa\ge \lambda$. We consider translations $\partial B_{\sfrac2\kappa}(0,0,\overline t-z_C)$ of the sphere $\partial B_{\sfrac2\kappa}(0,0,-z_C)$. Let $\overline t$ be the smallest $t$ such that $B_{\sfrac2\kappa}(0,0,\overline t-z_C)$ contains $E$ and let $\overline x\in\overline\Omega$ be the contact point between $\partial B_{\sfrac2\kappa}(0,0,-z_C)$ and $\partial E$; clearly $\overline x\in \cl(K_+)$. 
   We notice that, by \eqref{e:volume-E-+}, $\overline t\ge 0$, where an equality holds if and only if $\{x_3>0\}\cap E=\{x_3>0\}\cap\mathcal B_\kappa$. 
   Then two cases can happen.
    \begin{enumerate}
        \item $\overline x$ lies on the boundary of the tubular neighbourhood: $\overline{x} \in \pa \Omega \cap \pa E \cap \partial B_{\sfrac2\kappa}(0,0,\overline t-z_C)$ . Then, by \cref{lemma:pernonesseretroppolunghi} (iii), the geometric construction of $SC_\kappa^+$ (the angle between the sphere $\partial B_{\sfrac2\kappa}(0,0, t-z_C)$ and $\partial\Omega$ is smaller than $\sfrac\pi2$ when $t>0$ and is greater than $\sfrac\pi2$ when $t<0$) and the assumption \eqref{e:volume-E-+}, we have that $\overline t=0$ and 
        $$
        \{x_3>0\}\cap \Omega \cap \pa E  = \Omega \cap SC_\kappa^{+},
        $$
        and necessarily the two curvatures must coincide $\kappa = \lambda$.
        \item $\overline{x}$ is an interior point: $ \overline{x} \in \Omega \cap \pa E \cap \partial B_{\sfrac2\kappa}(0,0,\overline t-z_C)$. In this case the conclusion depends on the relation between the curvature $\kappa$ of the spherical cap $ SC_\kappa^+$ and the mean curvature $\lambda$ of $\pa E$.
        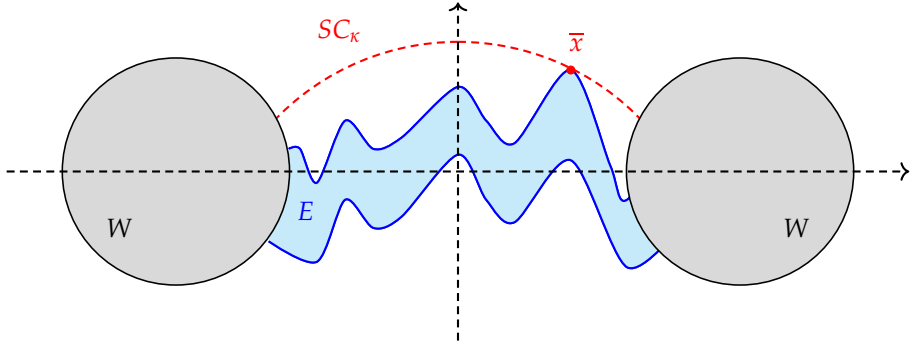
\begin{figure}[htbp]
		\centering
		\begin{tikzpicture}[rotate=0, scale= 0.75]
\draw[thick, red, densely dashed, name path=n] (0,2.3) arc [start angle=90, end angle = 141,x radius = 45mm, y radius = 45mm];
\draw[thick, red, densely dashed, name path=r] (0,2.3) arc [start angle=90, end angle = 39,x radius = 45mm, y radius = 45mm];
\draw [thick, color=blue, name path=s] plot [smooth] coordinates {(-3,0.4)  (-2.8,0.4) (-2.5,-0.2) (-2,0.9) (-1.5, 0.4)(-1,0.6) (0,1.5) (0.5, 0.9) (1,0.5) (2,1.8) (2.7, 0.1) (3,-0.5) (4,1)};
\draw [thick, color=blue, name path=t] plot [smooth] coordinates {(-3.355,-1.237) (-2.5,-1.6) (-2,-0.5) (-1.5, -1)(-1,-0.8) (0,0.3) (0.5, -0.5) (1,-0.9) (2,0.2) (3,-1.7) (4,-0.9)};
\begin{scope}[transparency group]
\tikzfillbetween[of=s and t] {color=cyan!20};
\end{scope}
\draw [thick, color=blue] plot [smooth] coordinates {(-3,0.4)  (-2.8,0.4) (-2.5,-0.2) (-2,0.9) (-1.5, 0.4)(-1,0.6) (0,1.5) (0.5, 0.9) (1,0.5) (2,1.8) (2.7, 0.1) (3,-0.5) (4,1)};
\draw [thick, color=blue] plot [smooth] coordinates {(-3.355,-1.237) (-2.5,-1.6) (-2,-0.5) (-1.5, -1)(-1,-0.8) (0,0.3) (0.5, -0.5) (1,-0.9) (2,0.2) (3,-1.7) (4,-0.9)};
\filldraw [red] (2,1.8) circle (2pt);
\draw[draw=none, fill=white] (5,0) circle [radius = 2cm];
\draw[draw=none, fill=white] (-5,0) circle [radius = 2cm];
\draw[very thick, color=black, name path=a] (-5,0) circle [radius=2];
\draw[very thick, color=black, name path=b] (5,0) circle [radius=2];
\tikzfillbetween[of=a and b] {color=gray!30};
\draw [color = red] node at (2.1,2.3) {$\overline{x}$};
\draw [color = red] node at (-2.1,2.5) {$SC_\kappa$};
\draw [color = black] node at (6,-1) {$W$};
\draw [color = black] node at (-6,-1) {$W$};
\draw [color = blue] node at (-2.7,-0.7) {$E$};
\draw[thick, densely dashed, color=black,->] (-8,0) -- (8,0);
\draw[thick, densely dashed, color=black,->] (0,-3) -- (0,3);
\end{tikzpicture}
		\caption{$SC^+_\kappa$ touches $\Omega \cap \pa E \cap \{x_3 > 0\}$ in $\overline{x}$ and the curvature are ordered $\kappa < \lambda$.}
		\label{fig:SCK_toccaE_sopra}
\end{figure}
      Since $\kappa \geq \lambda$, by the maximum principle, we get
        $$
        \Omega \cap \partial B_{\sfrac2\kappa}(0,0,\overline t-z_C) \subset \Omega \cap \pa E.
        $$
        Thus, $\lambda = \kappa$. Hence, since both $\Omega \cap \pa E$ and $\Omega \cap SC_\kappa$ are $C^{1,\alpha}$ graphs with the same curvature $\kappa = \lambda$ and, using {\rm (iii)} of \cref{lemma:pernonesseretroppolunghi}, they meet $\pa \Omega$ with an angle of $\sfrac{\pi}{2}$, we have that ${\rm Vol}(E\cap\{x_3>0\}) = {\rm Vol}(\mathcal B_\kappa\cap\{x_3>0\}) = \sfrac{\eps}{2}$. This implies that $\overline t=0$ and 
        $$
        \Omega \cap  SC^+_\kappa = \Omega \cap \pa E \cap \{x_3 > 0\},
        $$
        and, in particular, we also deduce that $\Omega \cap \pa E \cap \{x_3 > 0\}$ is a normal graph over the plane $\{x_3 =0\}$.
    \end{enumerate}
    \noindent {\it Step 2. $\kappa > \lambda$ or we have that $\{x_3<0\}\cap E=\{x_3<0\}\cap \mathcal B_\kappa$ (and $\kappa=\lambda$).}
   We suppose that $\kappa\le \lambda$ and we move the spherical cap $SC_\kappa^-$ from $+\infty$ along the $x_3$-axis. Let $\underline t$ be the largest $t$ such that $(te_3+SC_\kappa^-)$ lies above $\cl{(K_-)}$. The stopping criterion assures the existence of a touching point from above $P \in \overline{\Omega} \cap \cl{(K_-)}\cap (\underline te_3+SC_\kappa^-)$. Precisely, as a consequence of \cref{prop:lunga!!!} and \cref{lemma:pernonesseretroppolunghi}, the point $P$ lies in the half-space $\{x_3 <0\}$. Two cases can happen.
\begin{itemize}
    \item $P$ is an interior point: $P \in \Omega \cap K_-\cap (\underline te_3+SC_\kappa^-)$. In this case, $\underline t = 0$ and by the maximum principle 
    $$
        \Omega \cap SC^-_\kappa = \Omega \cap \pa E \cap \{x_3 < 0\} \qquad \hbox{ and } \qquad \kappa = \lambda.
        $$
    \item $P$ is a boundary point: $P \in \pa \Omega \cap\cl(K_-) \cap (\underline te_3+SC_\kappa^-)$. As in Case 1 of Step 1, by combining {\rm (iii)} of \cref{lemma:pernonesseretroppolunghi} and the geometrical construction of $SC_\kappa^-$, we get that 
    $$
        \Omega \cap \pa E\cap \{x_3 <0\} = \Omega \cap SC_\kappa^{-},
        $$
        and necessarily the two curvatures must coincide $\kappa = \lambda$.
\end{itemize}
We next notice that by Step 1 and Step 2, we have that necessarily $\Omega \cap \pa E \cap \{x_3 >0\}$ coincides with $\Omega \cap SC_\kappa^+$ or $\Omega \cap \pa E \cap \{x_3 <0\}$ coincides with $\Omega \cap SC_\kappa^-$. In both cases we get
$${\rm Vol}(\{x_3>0\}\cap E)= {\rm Vol}(\{x_3<0\}\cap E)= \frac{\eps}{2}\,.$$
In particular, applying Step 1 and Step 2 to $E$ and its reflection (with respect to the plane $\{x_3=0\}$), we get the claim. 
%
\end{proof}

Thus, in the physical dimension $d = 3$ and for a circle $\Gamma = \pa B_1(0) \cap \{x_3 = 0\}$, we are the able to geometrical characterize the foliation.

\begin{remark}
\label{cor:true_minimizer}
In the physical dimension $d = 3$, the unique minimizer $E^\eps$ of $\F$ is given by
 $$
 E^\eps = \mathcal{B}_{\lambda_\eps}:=\Omega \cap B_{\sfrac{2}\lambda_\eps}(0,0,z_C) \cap B_{\sfrac{2}{\lambda_\eps}}(0,0,-z_C).
 $$
Moreover, as a function of $\eps$, the sets $ \mathcal{B}_{\lambda_\eps}$ are increasing with respect to inclusion, their boundaries 
$$\Omega\cap \pa \mathcal{B}_{\lambda_\eps} = SC_{\lambda_\eps}^+\cup SC_{\lambda_\eps}^-$$ 
are disjoint spherical caps and, together with the disk $D_0=\{x_3=0\}\cap \Omega$, form a foliation of $\overline{\Omega}$ in a neighborhood of $D_0$.
\end{remark}

\section{Epsilon - regularity and existence of classical solutions}
\label{sec:regolarita}

In this section, we provide a fundamental ingredient to construct the foliation of $\overline{\Omega}$ for a general $\Gamma$, smooth embedding of the $d-2$-dimensional sphere $\mathbb{S}^{d-2}$ in the hyperplane $\bp$: $\Omega \cap \pa E$ is a $C^{1, \alpha}$ manifold, the set of finite perimeter $E$ is constrained to contain a fixed amount of volume $\eps$ and $\Omega \cap \pa E$ meets othogonally the $\delta$-tubular neighbourhood $W$.

The main theorem we are going to prove is the following one.

\begin{theorem}\label{t:reg-C1alpha}
Let $S_+$, $S_-$, and $\widetilde \Omega$ be as in \cref{prop:lunga!!!}. Then, $\widetilde\Omega\cap\partial S_+$ and $\widetilde\Omega\cap\partial S_-$ are $C^{1,\alpha}$ graphs up to the boundary. Precisely, there exists $\e_0>0$ (depending on $\Gamma$, $\delta$ and $\mathcal C$) such that for every fixed $\varepsilon \in \left( 0, \e_0\right]$, there are two closed domains $D_{\pm} \subset \R^{d-1}$ with $C^{1,\alpha}$ boundaries and functions $u_{\pm}\in C^{1,\alpha}(D_{\pm})$ with 
$$\pm u_\pm (x')>0\qquad\text{for every}\qquad x'\in D_\pm,$$
and
\begin{equation*}
   \cl(\widetilde\Omega\cap\partial S_\pm) = \big\{(x',u_\pm(x'))\ :\ x'\in D_{\pm}\big\}.
\end{equation*}
Furthermore
\begin{equation*}
\lim_{\eps\to0}\Big(\|u_\pm\|_{L^\infty(D_{\pm})}+\|\nabla u_\pm\|_{L^\infty(D_{\pm})}\Big)=0.
\end{equation*}
\end{theorem}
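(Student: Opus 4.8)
\emph{Proof plan.}
The plan is to combine the structural and curvature information already in hand with the interior and free-boundary versions of Allard's $\varepsilon$-regularity theorem. By \cref{prop:lunga!!!}, \cref{lemma:pernonesseretroppolunghi} and \cref{l:upper_bound_curvatura_eps} we know that, for $\varepsilon$ small, $K=\Omega\cap\partial E=K_+\sqcup K_-$ with $K_\pm=\widetilde\Omega\cap\partial S_\pm$, where $S_\pm$ are $(\Lambda,r_0)$-minimizers of the perimeter in $\widetilde\Omega\cap\{\pm x_d>0\}$, with $\overline S_\pm\subset\{\pm x_d>0\}$, constant mean curvature $\lambda\in(0,\Pi\varepsilon]$, orthogonal contact with $\partial\Omega$, and singular set of dimension at most $d-8$; by \cref{lemma:Estanellastriscia} and the density estimate of \cref{l:density-estimate}, the associated varifolds converge to $2\,\H^{d-1}\text{\Large$\llcorner$} D_0$, hence $\overline K\to\overline{D_0}$ in the Hausdorff distance as $\varepsilon\to0$. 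The heart of the matter is that this forces $K_\pm$ to be so flat, at every scale up to a size comparable with the fixed geometry of $W$, that $\varepsilon$-regularity upgrades the a priori ``$C^{1,\alpha}$ off a $(d-8)$-set'' to full $C^{1,\alpha}$ regularity up to $\partial\Omega$, with scale-invariant $C^{1,\alpha}$ norm controlled by a modulus of the flatness, and therefore vanishing with $\varepsilon$.

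\emph{Flatness and interior regularity.}
Fix the scale $r_0=r_0(W,d,\Lambda)$ of \cref{l:density-estimate}. By the Hausdorff convergence $\overline K\to\overline{D_0}$, the $L^\infty$-bound $|\lambda|\le\Pi\varepsilon$, and the multiplicity-one statement of \cref{rem:nocollapsing} (so there are no collapsed, multiplicity-two pieces), for every $x_0\in\overline{K_\pm}$ the rescaled varifold $V_{x_0,r_0}$ is $\eta(\varepsilon)$-close, in the varifold flatness sense, to a multiplicity-one plane when $\operatorname{dist}(x_0,\partial\Omega)$ is bounded below, and to a multiplicity-one half-plane meeting $\partial\Omega$ orthogonally when $x_0$ is near $\partial\Omega$ (the orthogonality being \cref{lemma:pernonesseretroppolunghi}(iii), and the half-plane, as opposed to a pair of half-planes along $\{x_d=0\}$, being forced by \cref{lemma:pernonesseretroppolunghi}, which excludes any contact of $\overline{K_\pm}$ with $\{x_d=0\}$), with $\eta(\varepsilon)\to0$. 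At points away from $\partial\Omega$, since $S_\pm$ are $(\Lambda,r_0)$-perimeter minimizers, the De Giorgi--Allard $\varepsilon$-regularity theorem for almost minimizers (see \cite[Chapter 21]{maggi_book}) applies: the flatness rules out all singular points, so $K_\pm\cap\Omega$ is everywhere a $C^{1,\alpha}$ hypersurface, with local graphical estimates whose scale-invariant $C^{1,\alpha}$ norm is $\le C\eta(\varepsilon)$.

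\emph{Boundary regularity, graph structure, vanishing norms.}
Near $\partial\Omega=\partial I_\delta(\Gamma)$ we first flatten the boundary by a smooth diffeomorphism (the same change of variables $\phi$ used later in the hodograph transform), after which $S_\pm$ becomes an almost minimizer of a perimeter-type functional with a $C^\infty$ metric and free boundary on a hyperplane, meeting it orthogonally, whose boundary blow-ups are flat half-planes orthogonal to that hyperplane. The free-boundary $\varepsilon$-regularity theorem for such surfaces --- obtained, in the flat setting, by reflection across the barrier and interior regularity, cf. \cite{kagayatone} --- then gives that $\overline{K_\pm}$ is a $C^{1,\alpha}$ manifold-with-boundary up to $\partial\Omega$, again with scale-invariant $C^{1,\alpha}$ norm $\le C\eta(\varepsilon)$. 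Since every tangent plane of $\overline{K_+}$ is within $\eta(\varepsilon)$ of $\{x_d=0\}$ and $\overline{K_+}\subset\{x_d>0\}$ is Hausdorff-close to $\overline{D_0}$, the projection $\mathbf p$ onto $\bp$ is a proper local $C^{1,\alpha}$ diffeomorphism of $\overline{K_+}$ onto its image; it is injective because $\mathbf p(\overline{K_+})$ is connected and because a double sheet or a near-vertical arc of $\partial S_+$ would either disconnect $S_+$ (contradicting connectedness) or violate the gradient bound. Setting $D_+:=\mathbf p(\overline{K_+})$ and $u_+:=x_d\circ(\mathbf p|_{\overline{K_+}})^{-1}$ (analogously $D_-,u_-$), we get $u_\pm\in C^{1,\alpha}(D_\pm)$, $\pm u_\pm>0$, with $\cl(\widetilde\Omega\cap\partial S_\pm)=\operatorname{graph}(u_\pm)$; moreover $\partial D_\pm=\mathbf p(\partial\Omega\cap\overline{K_\pm})$ is $C^{1,\alpha}$ because $\partial\Omega\cap\overline{K_\pm}$ is the transversal intersection of two $C^{1,\alpha}$ hypersurfaces and $\mathbf p$ restricts to a $C^{1,\alpha}$ embedding of it. Finally $\|u_\pm\|_{L^\infty(D_\pm)}\to0$ from the strip confinement $\overline K\subset\mathcal T_{\delta\sigma}$ sharpened by the Hausdorff convergence to $D_0\subset\{x_d=0\}$, and $\|\nabla u_\pm\|_{L^\infty(D_\pm)}\le C\eta(\varepsilon)\to0$ from the tangent-plane control above.

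\emph{Main obstacle.}
The delicate point is the boundary $\varepsilon$-regularity: one must check that, near $\partial\Omega$, $S_\pm$ genuinely satisfies the hypotheses of a free-boundary regularity theorem --- that it is an \emph{almost minimizer} of the flattened perimeter with free boundary on $\partial\Omega$, which we take from the minimality established in Step~5 of the proof of \cref{lemma:pernonesseretroppolunghi}, and not merely stationary --- and that its boundary blow-ups are half-planes \emph{orthogonal} to $\partial\Omega$, rather than half-planes at a smaller angle or higher-multiplicity configurations; orthogonality is \cref{lemma:pernonesseretroppolunghi}(iii) together with the exclusion of $\{x_d=0\}$-contact, and multiplicity one is \cref{rem:nocollapsing}. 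Obtaining the \emph{quantitative} smallness of the $C^{1,\alpha}$ norm up to the boundary, with constants independent of $\delta$ (this uniformity being exactly what later permits the $\delta$-independent $C^{2,\alpha}$ and curvature estimates), is what requires performing the flattening of $W$ at scale $1$ and reading the free-boundary $\varepsilon$-regularity theorem in its quantitative form.
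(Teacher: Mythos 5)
Your overall strategy coincides with the paper's: confine $\partial S_\pm$ to a thin strip, record the almost-minimality, the mean curvature bound $0<\lambda\le\Pi\eps$ and the orthogonal contact from \cref{prop:lunga!!!}, \cref{lemma:pernonesseretroppolunghi}, \cref{l:upper_bound_curvatura_eps}, turn this into smallness of the flatness/excess at a fixed scale, and then invoke an Allard-type $\varepsilon$-regularity theorem in the interior and at the free boundary, from which the graph representation and the vanishing of $\|u_\pm\|_{L^\infty}+\|\nabla u_\pm\|_{L^\infty}$ follow. (The paper gets the flatness slightly more directly, from the area pinching $\H^{d-1}(D_0)\le\H^{d-1}(\Omega\cap\partial S_\pm)\le\H^{d-1}(D_0)+C\eps^2$ together with the strip confinement of \cref{lemma:Estanellastriscia}, rather than from Hausdorff convergence of the varifolds, but this is a minor variation.)

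The one step where your argument is not sound as written is precisely the crucial boundary ingredient. You propose to flatten $\partial\Omega$ by the diffeomorphism $\phi$ and then obtain free-boundary $\varepsilon$-regularity ``by reflection across the barrier and interior regularity, cf.\ \cite{kagayatone}''. Reflection produces an almost minimizer (or a varifold of bounded first variation) only when the ambient functional is invariant under the reflection; after flattening, the perimeter becomes an anisotropic functional with coefficients $\tens{A}(s,t)$ that are \emph{not} even in the normal variable, so the reflected competitor does not inherit almost-minimality and the naive reflection argument breaks down. Moreover \cite{kagayatone} only provides the distributional formulation of the $\sfrac\pi2$ contact angle, not a regularity theorem. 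What is actually needed (and what the paper uses) is the boundary/free-boundary Allard theorem of \cite{Allard_bordo, GruterJost} — where the reflection is performed at the varifold level directly across the curved support surface, which is the technically delicate part — stated in the paper as \cref{th:eps_reg} and then reproved there by a De Silva-type viscosity scheme (partial Harnack, \cref{lemma:PH_inequality}, plus improvement of flatness, \cref{lemma:IOF}). If you replace your flatten-and-reflect step by a direct appeal to \cref{th:eps_reg} (checking its density, flatness and curvature hypotheses exactly as you already do), your proof matches the paper's. The remaining items in your write-up — injectivity of the projection, $C^{1,\alpha}$ regularity of $\partial D_\pm$, and the vanishing of the norms — are fine, and the $\delta$-uniformity you worry about is not required for this theorem (here $\eps_0$ is allowed to depend on $\delta$; the $\delta$-independent bounds only enter later, at the $C^{2,\alpha}$ level of \cref{prop:C2alpha_graph}).
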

\begin{proof}
By \cref{lemma:Estanellastriscia}, 
for every $\sigma>0$, we can find $\eps$ small enough such that $\widetilde\Omega\cap\partial S_+$ and $\widetilde\Omega\cap\partial S_-$ are contained in the strip $\{(x',x_d)\in \overline \Omega\ :\ -\sigma<x_d<\sigma\}$. Moreover, by \eqref{e:strip-upper} and the fact that both $\widetilde\Omega\cap\partial S_+$ and $\widetilde\Omega\cap\partial S_-$ are $\C$-spanning, we get that 
$$\H^{d-1}(D_0)\le \H^{d-1}(\Omega\cap \partial S_\pm)\le \H^{d-1}(D_0)+C\eps^2.$$
In particular, for every $\sigma>0$, we can choose $\eps_0$ such that, in every ball $B_r(x_0)$ centered in $x_0\in D_0$ of radius $r=\delta$, $\widetilde\Omega\cap\partial S_+$ and $\widetilde\Omega\cap\partial S_-$ are contained in a strip of width $\sigma\delta$; they have $\H^{d-1}$ measure at most $(1+\sigma)\H^{d-1}(D_0\cap B_\delta(x_0))$; and they have mean curvature $\lambda$ bounded by $\Pi\eps$ (see \cref{l:upper_bound_curvatura_eps}). Thus, the $C^{1,\alpha}$-regularity of $\widetilde\Omega\cap\partial S_\pm$ follows from the Allard's boundary regularity theorem \cite{Allard_bordo, GruterJost} (see \cref{th:eps_reg} below) applied to $\widetilde\Omega\cap\pa S_\pm$. 
\end{proof}

\begin{theorem}[Epsilon-regularity theorem \cite{Allard_bordo, GruterJost}]
\label{th:eps_reg}
Let $F$ and $\Omega$ be as follows.
\begin{itemize}
    \item $\Omega$ is an open set in $\R^d$ with $C^{1,\sigma}$ boundary $\partial\Omega$ for some $\sigma>0$;
    \item $F$ satisfies the following Euler-Lagrange equation
    $$\int_{\pa^*F}\lambda_F\,X\cdot\nu_F\,d\H^{d-1}=\int_{\pa^*F}\Div^K\,X\,d\H^{d-1}\,,
    $$
    for every $X\in C^1_c(\R^{d};\R^{d})$ with $X\cdot\nu_\Om=0$ on $\pa\Om$, where $\nu_F$ and $\nu_\Omega$ are the exterior normals to $F$ and $\Omega$; and the mean curvature $\lambda_F:\partial F\cap\Omega\to\R$ is uniformly bounded.
\end{itemize}
\noindent Then, for some $x_0 \in \pa \Omega\cap\pa F$, there exist a dimensional constant $\overline{\tau}>0$ and a radius $r_0$ such that the following holds. If for some $r\leq r_0$ and $x_0\in\pa\Omega$ we have: 
\begin{equation}\label{e:eps-reg-density-condition}
\H^{d-1}(\partial^\ast F\cap B_r(x_0)\cap\Omega)\le \frac{(1+\overline\tau)d\omega_dr^{d-1}}{2},
\end{equation}
 and $\pa F$ is $r \overline{\tau}$-flat in $B_r(x_0)$ along a direction $\nu\in\partial B_1$ with $\nu_\Omega(x_0) \cdot \nu = 0$, namely in $B_r(x_0) \cap \Omega$ it holds
 $$
  \left\{(x-x_0) \cdot  \nu + r a >0\right\}\subset F \subset \left\{(x-x_0) \cdot \nu + r b>0\right\},
 $$
 where $b-a \le \overline\tau$. Then, there exists $\alpha >0$ such that, in $B_{\sfrac{r}{2}}(x_0)\cap\overline\Omega$, $\partial F$ is $C^{1,\alpha}$ regular manifold with boundary.
\end{theorem}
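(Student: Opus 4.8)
The plan is to deduce this from the boundary version of Allard's $\varepsilon$-regularity theorem in the form established by Allard \cite{Allard_bordo} and Gr\"uter--Jost \cite{GruterJost}, by flattening $\partial\Omega$ near $x_0$, reflecting across it, and then invoking the interior regularity theorem for integral varifolds with bounded generalized mean curvature.

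\textbf{Flattening and reflection.} After a rigid motion we take $x_0=0$ and $\nu_\Omega(0)=-e_d$, and we pick a $C^{1,\sigma}$ diffeomorphism $\Psi$ of a neighbourhood of $0$ onto $B_1(0)$ with $D\Psi(0)=\Id$, $\Psi(\Omega)=B_1\cap\{x_d>0\}$ and $\Psi(\partial\Omega)=B_1\cap\{x_d=0\}$. Pushing the integral varifold $V:=\mathbf{var}(\partial^\ast F,1)$ forward to $\tilde V:=\Psi_\sharp V$, the first variation identity in the hypothesis (of the form \eqref{e:stationary-main} with multiplicity one) becomes: $\tilde V$ is integral, has generalized mean curvature in $\{x_d>0\}$ bounded by $\norm{\lambda_F}_{L^\infty}+C(\norm{\Psi}_{C^{1,\sigma}})$, and satisfies $\delta\tilde V(Y)=\int\tilde H\cdot Y\,d\norm{\tilde V}$ for every $Y\in C^1_c(B_1;\R^d)$ tangent to $\{x_d=0\}$, i.e.\ it has contact angle $\sfrac{\pi}{2}$ with $\{x_d=0\}$ in the sense of distributions. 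Writing $\rho(x',x_d):=(x',-x_d)$, one forms the doubled varifold $\hat V:=(\tilde V\llcorner\{x_d>0\})+\rho_\sharp(\tilde V\llcorner\{x_d>0\})$; the orthogonality condition together with the $L^\infty$ bound on $\tilde H$ is exactly what guarantees --- this is the technical core of \cite{Allard_bordo,GruterJost} --- that $\hat V$ is an integral varifold in the \emph{whole} ball $B_1$, with no residual boundary measure and with generalized mean curvature bounded by the same constant.

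\textbf{Applying the interior theorem.} One then checks the two smallness hypotheses of Allard's interior $\varepsilon$-regularity theorem for $\hat V$ at the origin. The density bound \eqref{e:eps-reg-density-condition} controls $\norm{\tilde V}(B_r\cap\{x_d>0\})$ by $(1+C\overline\tau)$ times the area of a half-hyperplane through $0$ orthogonal to $\{x_d=0\}$ (the factor $C$ absorbing the conventions and the distortion of balls by $\Psi$, which is handled by taking $r_0$ small), hence $\norm{\hat V}(B_r)\le(1+C\overline\tau)\,\omega_{d-1}r^{d-1}$; together with the lower density estimate from the monotonicity formula for varifolds with bounded mean curvature (as in \cref{l:density-estimate}), this forces $\norm{\hat V}(B_r)/(\omega_{d-1}r^{d-1})\in(1-\eta(\overline\tau),1+\eta(\overline\tau))$ with $\eta(\overline\tau)\to0$ as $\overline\tau\to0$. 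For the flatness, since $\nu\cdot\nu_\Omega(0)=0$, i.e.\ $\nu\perp e_d$ after the rotation, the reflection $\rho$ fixes the hyperplane $\{x\cdot\nu=0\}$ setwise, so the $r\overline\tau$-trapping of $\partial F$ between $\{x\cdot\nu+ra>0\}$ and $\{x\cdot\nu+rb>0\}$ survives flattening and doubling (again up to a distortion absorbed into $\overline\tau$), placing $\spt\hat V\cap B_r$ in an $O(r\overline\tau)$-neighbourhood of $\{x\cdot\nu=0\}$. Choosing $\overline\tau$ below the dimensional threshold in Allard's theorem and $r_0$ accordingly, we obtain $\alpha\in(0,1)$ such that $\spt\hat V\cap B_{r/2}$ is a $C^{1,\alpha}$ hypersurface graphical over $\{x\cdot\nu=0\}$; restricting to $\{x_d\ge0\}$ and pulling back by $\rho$ and then by $\Psi^{-1}$ (taking $\sigma\le\alpha$), $\partial F$ is a $C^{1,\alpha}$ manifold with boundary in $B_{r/2}(x_0)\cap\overline\Omega$, its boundary contained in $\partial\Omega$.

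\textbf{Expected main difficulty.} The genuinely delicate point is the reflection step: the variational problem provides the orthogonality only in the distributional sense, a priori allowing a lower-dimensional singular set along $\partial\Omega$, and one must rule out an extra boundary contribution appearing when doubling across $\{x_d=0\}$, so that $\hat V$ really has bounded generalized mean curvature in $B_1$. This is precisely what is established in \cite{Allard_bordo,GruterJost} (compare also \cite{kagayatone}); once it is granted, the rest is a routine bookkeeping of the flattening and doubling distortions against the dimensional constant of Allard's interior theorem.
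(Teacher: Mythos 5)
Your route (flatten $\partial\Omega$, reflect the varifold across the flattened boundary, and invoke interior Allard) is genuinely different from the proof in the paper, which is a self-contained De Silva-type viscosity argument: the partial Harnack inequality (\cref{lemma:PH_inequality}), an improvement-of-flatness lemma proved by compactness and linearization to a Neumann problem for harmonic functions (\cref{lemma:IOF}), and then an iteration giving uniqueness and H\"older decay of the blow-up at boundary points, combined with interior Allard at nearby interior points. The reason this comparison matters is that the paper's argument only uses the oscillation of $\nu_\Omega$ at the relevant scales, so it is compatible with the stated $C^{1,\sigma}$ hypothesis on $\partial\Omega$, while your reduction is not.

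The genuine gap is in the flattening step. If $\Psi$ is only a $C^{1,\sigma}$ diffeomorphism (and with $\partial\Omega\in C^{1,\sigma}$ you cannot do better), the pushforward $\tilde V=\Psi_\sharp V$ does \emph{not} in general have bounded generalized mean curvature: rewriting $\delta\tilde V(Y)$ as a first variation of $V$ requires the vector field $(D\Psi)^{-1}(Y\circ\Psi)$, which is only $C^{0,\sigma}$, and the error terms involve derivatives of $D\Psi$, which do not exist as functions. Indeed, the image of even a stationary varifold under a $C^{1,\sigma}$ map (e.g.\ a line mapped onto the graph of a $C^{1,\sigma}$ function) may fail to have locally bounded first variation at all, so the asserted bound $\norm{\tilde H}_{L^\infty}\le\norm{\lambda_F}_{L^\infty}+C(\norm{\Psi}_{C^{1,\sigma}})$, and the claim that the distortion can be ``absorbed by taking $r_0$ small'', are unjustified; one would instead need a perturbed (Hölder-error) version of the monotonicity formula and of Allard's theorem, which the proposal does not supply. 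The same obstruction blocks the alternative of reflecting directly across the curved boundary: the Gr\"uter--Jost reflection construction \cite{GruterJost} needs a $C^2$ supporting hypersurface (there is not even a single-valued nearest-point projection in a uniform neighbourhood of a merely $C^{1,\sigma}$ boundary), and \cite{Allard_bordo} treats prescribed, not free, boundaries. So either you silently upgrade the hypothesis to $\partial\Omega\in C^2$ (in which case the argument is fine but amounts to quoting the boundary regularity results the theorem is attributed to, rather than proving the statement at the stated regularity), or the one original reduction step in your proposal is exactly the one that fails. Your treatment of the flat-boundary doubling itself (testing the distributional contact-angle condition with the reflection-symmetrized field, and the density bookkeeping) is essentially correct, but it is downstream of the flawed flattening.
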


In the rest of this section, we give an alternative proof to 
 \cref{th:eps_reg} based on the viscosity approach from \cite{DeSilva}, which can be easily extended to more general boundary conditions. 
 A fundamental ingredient for proving \cref{th:eps_reg} is the Partial Harnack inequality which can be stated as follows, the proof being an adaptation of \cite{DeSilva} (for the details we refer to \cite{DeSilva} and \cite{velichkov2023regularity}).

\begin{lemma}[Partial Harnack inequality]
    \label{lemma:PH_inequality}
    Let $F$ and $\Omega$ be as in \cref{th:eps_reg}.
Then, there exist dimensional constants $r_0>0$, $\overline{\tau}>0$, $\zeta\in\left(\frac{1}{2},1\right)$ and $c \in (0,1)$ such that for every $x_0 \in \overline{\Omega}$ the following holds. 
Suppose that for some $\nu \in \pa B_1$ and $r<r_0$ we have that  for all $x\in B_r(x_0)\cap \partial\Omega$ and $\widetilde{x}\in B_r(x_0)\cap \Omega\cap \partial F$ 
\begin{equation}
    \label{eq:hyp_PH}
    \tau \leq \overline{\tau}, \qquad \hbox{ and }\qquad \abs{\nu_\Omega(x) \cdot \nu} \leq {\tau}^{1+\zeta} \qquad \hbox{ and }\qquad \abs{\lambda_F(\widetilde{x})} \leq {\tau}^{1+ \zeta}.
\end{equation}
Suppose, moreover, that there is $\overline x\in \Omega$ such that 
\begin{equation}
    \label{eq:overline-x}
    B_{\sfrac{3r}{4}}(\overline{x})\subset B_r(x_0)\ ,\qquad B_{\sfrac{r}{20}}(\overline{x})\subset \Omega\qquad\text{and}\qquad \H^{d-1}(\partial F\cap B_{\sfrac{r}{20}}(\overline{x}))\le (1+\overline\tau)d\omega_d\left(\sfrac{r}{20}\right)^{d-1}.
    \end{equation}
Finally, suppose that $\pa F$ is $r \tau$-flat in $B_r(x_0)$ along the $\nu$-direction, namely in $B_r(x_0) \cap \Omega$ it holds
 $$
 \left\{(x-x_0) \cdot  \nu + r a >0\right\}\subset F \subset \left\{(x-x_0) \cdot \nu + r b>0\right\},
 $$
 where $b-a = \tau$. Then, there exist two real numbers $\overline{a}, \overline{b}$ with $a \leq \overline{a} < \overline{b} \leq b$ with
  $$
    |\overline{b}-\overline{a}|\leq (1-c)\abs{b-a},
    $$
     such that in $B_{\sfrac{r}{50}}(x_0) \cap \Omega $
    \begin{align}
    \label{eq:tesi_PH}
   \Big\{(x-x_0) \cdot  \nu + r \overline{a} >0\Big\}\subset F \subset \Big\{(x-x_0) \cdot \nu + r \overline{b}>0\Big\}.
    \end{align} 
\end{lemma}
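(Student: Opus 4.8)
The strategy is the standard viscosity/barrier argument of De Silva adapted to the free boundary setting. I work in the rescaled picture: by translating to $x_0=0$ and dilating by $1/r$, I may assume $r=1$, $x_0=0$, and that $\partial F$ lies in the slab $\{a<x\cdot\nu<b\}$ with $b-a=\tau$, while the hypotheses \eqref{eq:hyp_PH} give that $|\nu_\Omega(x)\cdot\nu|\le\tau^{1+\zeta}$ on $\partial\Omega$ and $|\lambda_F|\le\tau^{1+\zeta}$ on $\Omega\cap\partial F$. Without loss of generality assume the ``good ball'' condition \eqref{eq:overline-x} holds with $\overline x$ such that $B_{3/4}(\overline x)\subset B_1$, $B_{1/20}(\overline x)\subset\Omega$, and the density of $\partial F$ in $B_{1/20}(\overline x)$ is at most $(1+\overline\tau)d\omega_d(1/20)^{d-1}$. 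By the $\varepsilon$-regularity/Allard theorem (interior version), this density bound together with the $\tau$-flatness forces $\partial F\cap B_{1/40}(\overline x)$ to be a $C^{1,\alpha}$ graph of small norm; in particular $\partial F$ passes through $B_{1/40}(\overline x)$ and we may pick a point $p_0\in\Omega\cap\partial F$ inside this small interior ball. The dichotomy is then: either $p_0\cdot\nu$ is within $\tfrac{\tau}{2}$ of the ``top'' $b$ of the slab, or within $\tfrac\tau2$ of the ``bottom'' $a$. These two cases are symmetric, so I treat the case $p_0\cdot\nu\ge a+\tfrac\tau2$ and aim to improve the \emph{lower} barrier, i.e.\ find $\overline a>a$ with $\{x\cdot\nu+\overline a>0\}\subset F$ in $B_{1/50}\cap\Omega$ and $\overline b-\overline a\le(1-c)\tau$.

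The core is the barrier construction. Centered at a point $z$ lying on the lower barrier plane $\{x\cdot\nu=-a\}$ but well inside $\Omega$ and far from $\partial F$, build the standard De Silva comparison function
\[
w(x)=-a + c_1\Big(\tfrac1{|x-z|^{d-2}}-\tfrac1{\rho^{d-2}}\Big)\quad\text{(for }d\ge 3\text{, with the log in }d=2\text{)},
\]
rescaled so that on $\partial B_\rho(z)$ one has $w=-a$ and on $\partial B_{\rho/2}(z)$ one has $w\ge -a + c_2\tau$ for dimensional $c_1,c_2$. The graph $\{x\cdot\nu=w(x)\}$ has mean curvature that is strictly negative of size comparable to $\tau$ near $\partial B_{\rho/2}(z)$ (where it curves), which beats the true mean curvature bound $|\lambda_F|\le\tau^{1+\zeta}$ since $\tau^{1+\zeta}\ll\tau$ for small $\tau$. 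Slide this barrier upward from below: it starts strictly below $F$, and by the mean curvature comparison (Schätzle's maximum principle, \cite[Theorem 6.2]{schatzle2004quadratic}, applied to the $C^{1,\alpha}$ graph $\Omega\cap\partial F$ which by interior $\varepsilon$-regularity is smooth where it matters) it cannot touch $\Omega\cap\partial F$ at an interior point. The delicate point is the behavior at $\partial\Omega$: here I must ensure the sliding barrier does not first touch the \emph{fixed} boundary contact set $\mathrm{cl}(F)\cap\partial\Omega$ in a bad way. This is controlled by the hypothesis $|\nu_\Omega(x)\cdot\nu|\le\tau^{1+\zeta}$, which says $\partial\Omega$ is nearly parallel to $\nu$ at scale $\tau$, so the spherical barrier — whose normal is within $O(\tau)$ of $\pm\nu$ on the relevant cap — meets $\partial\Omega$ transversally in the ``outgoing'' direction and the Hopf-type lemma (\cref{lemma:con_bordo}, in its rescaled/viscosity form) rules out a boundary touching that would obstruct the slide. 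Hence the barrier slides a definite amount, which propagates the improved lower bound $\{x\cdot\nu+\overline a>0\}\subset F$ into $B_{1/50}\cap\Omega$ with $\overline a = a + c\tau$.

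Finally, since we are in the case $p_0\cdot\nu\ge a+\tfrac\tau2$ we already have $F\subset\{x\cdot\nu+b>0\}$ with the same $b$, so setting $\overline b=b$ gives $\overline b-\overline a = \tau - c\tau=(1-c)\tau$ as required; in the symmetric case $p_0\cdot\nu\le b-\tfrac\tau2$ one improves the upper barrier and sets $\overline a=a$. Undoing the rescaling recovers \eqref{eq:tesi_PH}. \textbf{The main obstacle} is precisely the boundary analysis: one must show that the one-sided flatness of $\partial\Omega$ relative to $\nu$ (encoded in $|\nu_\Omega\cdot\nu|\le\tau^{1+\zeta}$, which is genuinely stronger than plain $\tau$-flatness) is enough to guarantee that the barrier's contact with $F$ cannot migrate to $\partial\Omega$, so that the interior maximum principle plus the Hopf lemma close the argument; getting the powers of $\tau$ to line up — $\tau^{1+\zeta}$ for the curvature and for the normal deviation, versus $\tau$ for the barrier's curvature and slope gain — is where the choice $\zeta\in(\tfrac12,1)$ and the smallness of $\overline\tau$ are used, and is the heart of the adaptation of \cite{DeSilva} to this mixed free/fixed-boundary problem.
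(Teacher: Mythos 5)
Your plan is essentially the paper's own proof: after rescaling, the graph structure and Harnack-type gain of size $c\tau$ in the good interior ball $B_{\sfrac{r}{20}}(\overline{x})$ (via Allard) is propagated by sliding a De Silva-type barrier built from a strictly subharmonic radial profile over the hyperplane orthogonal to $\nu$, with interior touching excluded because the barrier's mean-curvature operator value of order $\tau$ beats $|\lambda_F|\le\tau^{1+\zeta}$, and boundary touching excluded by the viscosity $\pi/2$-contact (Neumann) condition on $\partial\Omega$ combined with $|\nu_\Omega\cdot\nu|\le\tau^{1+\zeta}$. The one step you leave implicit — and which the paper makes explicit by constructing $w$ so that its tangential derivative in the direction of $\nu_\Omega$ is bounded below by a dimensional constant, so that at a boundary contact the Neumann inequality is violated by an amount of order $\tau$ against an error of order $\tau^{1+\zeta}$ — is precisely the quantitative content of your ``Hopf-type lemma in rescaled/viscosity form'', so the approach is correct and matches the paper's.
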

\begin{proof}
First of all, by scaling, we can assume that $r=1$. In addition, we can suppose that $x_0$ is at distance at most $\sfrac{1}{10}$ from $\partial\Omega$. Indeed, if  $x_0\in {\rm int}\, \Omega$ is at larger distance, then the claim follows from the classical (interior) Allard's theorem (for a viscous version we refer to \cite{de2023short}). 
Moreover, it is not restrictive to choose $\nu = e_d$ since there exists a rotation $S:SO(\R^d)\to SO(\R^d)$ mapping the direction $\nu$ in $e_d$, and we can assume that $\abs{\nu_\Omega(x) - e_1}\leq \tau$ for all $x\in B_r(x_0)\cap \partial\Omega$. 

\medskip

Let $\overline{x}$ be as in \eqref{eq:overline-x}. We consider the following function on the hyperplane $\Sigma:= \{x \cdot \nu = 0\}$
\begin{equation}
\label{eq:w}
    w: \Sigma \to \R \qquad \hbox{ such that } \qquad \left\{
\begin{aligned}
    &w(y) = 1 &&\forall\, y \in \Sigma\cap B_{\sfrac{1}{50}}(\overline{x}),\\
    &w(y) = 0 &&\forall\, y \in \Sigma \setminus B_{\sfrac{3}{4}}(\overline{x}),\\
    &w(y) = \frac{C}{\abs{y- \overline{x}}^{d-1}}  &&\forall\, y \in \Sigma\cap B_{\sfrac{3}{4}}(\overline{x}) \setminus B_{\sfrac{1}{50}}(\overline{x}),
 \end{aligned}
\right.
\end{equation}
with $C$ a positive constant. Hence, $w$ is non a vanishing function inside the ball $B_{\sfrac{3}{4}}(\overline{x})$ and outside $\overline{B}_{\sfrac{1}{50}}(\overline{x})$ and in $B_{\sfrac{3}{4}}(\overline{x}) \setminus \overline{B}_{\sfrac{1}{50}}(\overline{x})$ it has the following properties:
\begin{itemize}
    \item the minimal surface operator has a sign, i.e. for all $y \in \Sigma\cap B_{\sfrac{3}{4}}(\overline{x}) \setminus B_{\sfrac{1}{50}}(\overline{x})$
    \begin{equation}
        \label{eq:operator_w_min_surface}
        \diver\left(\frac{\nabla w(y)}{\sqrt{1+ \abs{\nabla w(y)}^2}}\right) >0,
    \end{equation}
    since the constant $C$ is strictly positive;
    \item by construction, for all $y \in \pa \Omega \cap\Sigma\cap B_{\sfrac{3}{4}}(\overline{x}) \setminus B_{\sfrac{1}{50}}(\overline{x})$, we have 
    \begin{equation}
        \label{eq:boundary}
        \nabla w(y) \cdot e_1 \geq C >0,
    \end{equation}
    where $C$ is a positive dimensional constant.
\end{itemize}
Choosing $\overline{\tau}$ small enough, by the Allard's theorem in the ball $B_{\sfrac{1}{10}}(\overline{x})$, we have that $F$ is the subgraph of a function $u:\Sigma\cap B_{\sfrac{1}{10}}(\overline{x})\to\R$ and that $\partial F$ is its graph. Moreover, by the flatness condition on $\partial F$ and the density estimate, we have 
$$
a\le u(y)\le b\quad\text{for every}\quad y\in \Sigma\cap B_{\sfrac{1}{10}}(\overline x).
$$
Without loss of generality, we can suppose that $\displaystyle u(\overline x)\ge \frac{a+b}{2}=a+\frac\tau2$. 
Applying again the Allard's theorem, we obtain 
\begin{equation}
    \label{eq:Harnack}
    u(y) - a \geq c \tau \qquad \hbox{on  } B_{\sfrac{1}{50}}(\overline{x}),
\end{equation}
where $c$ is a dimensional constant. Taking the family of functions $\{v_t\}_{t \in [0,1)}$ given by
\begin{equation}
    \label{eq:v_t}
    v_t(y) = a + c \tau w(y)+ \left(t-1\right) c \tau,
\end{equation}
we want to show that the subgraphs of $v_t:\Sigma\cap B_{\sfrac{3}{4}}(\overline x)\to\R$ are contained in $F$  for all $t \in [0,1)$.
By contradiction, suppose that for some $t\in[0,1)$ the graph of $v_t$ touches $\partial F$ at some point $y+\nu v_t(y)$ with $y \in \Sigma$. Since $y\notin \overline B_{\sfrac{1}{50}}(\overline x)$ and $y\notin B_1\setminus\overline B_{\sfrac{3}{4}}(\overline x)$, we have that $v_t$ is smooth at $y$ and so there is a smooth function $u:\Sigma\to\R$ defined in a neighborhood of $y$ and such that $\partial F$ is precisely the graph of $u$. 

To conclude the proof, it is enough to exclude that  $y\in \Sigma\cap B_{\sfrac{3}{4}}(\overline{x}) \setminus \overline{B}_{\sfrac{1}{50}}(\overline{x})$. Thus, we analyze the following two cases: $y+ u(y)\nu \in {\rm int} (\Omega)$ and $y+ u(y)\nu\in \partial\Omega$.
In order to rule out the first case, we notice that $v_t$ is a strict subsolution in the internal part, that is
\begin{equation}
    \label{eq:Deltavt}
    \begin{aligned}
        \diver\left(\frac{\nabla v_t}{\sqrt{1+ \abs{\nabla v_t}^2}}\right) - \lambda_F&= \diver\left(\frac{\nabla \left(a + c \tau w(y)  + (t-1) c\tau\right)}{\sqrt{1+ \abs{\nabla \left(a + c \tau w(y)  + (t-1) c \tau\right)}^2}}\right) - \lambda_F\\
        &=c\tau \diver\left(\frac{\nabla w(y)}{\sqrt{1+ \abs{c \tau\nabla w(y)}^2}}\right) - \lambda_F > C,
    \end{aligned}
\end{equation}
having used \eqref{eq:hyp_PH}, \eqref{eq:operator_w_min_surface} and the fact that $C$ is a positive dimensional constant.  
In order to exclude the second case, we compute
$$
\nabla v_t(y)=\nabla \left(p + c \tau w(y)  + (t-1) c \tau\right) = \tau c \nabla w(y).
$$
Thus, the boundary condition yields
\begin{equation}
    \label{eq:boundary_cond}
    \frac{\left(- c \tau \nabla w(y), 1\right)}{\sqrt{1+ c^2 \tau^2\abs{\nabla w(y)}^2}}\cdot \nu_\Omega(y, w(y))\geq 0.
\end{equation}
Precisely \eqref{eq:boundary_cond} simplifies into 
$$
- c \tau \nabla w(y) \cdot e_1 +\left( \sum_{l= 2}^{d-1}(-c \tau \nabla w(y) \cdot e_l)e_l+e_d\right)\cdot( \nu_\Omega(y,v_t(y)) - e_1) \geq 0,
$$
which leads to a contradiction for $\overline\tau$ small enough (such that $\tau^\zeta$ 
 is smaller than the constant $C$ from \eqref{eq:boundary}).
\end{proof}

Finally, we are in position to prove the main result of this section: the improvement of flatness lemma for viscosity solutions. We are going to show the result considering points $x_0 \in \pa \Omega$ since for all internal points Allard's regularity theorem holds true.

\begin{lemma}[Improvement of flatness]
\label{lemma:IOF}
Let $F$ and $\Omega$ as in \cref{th:eps_reg}. 
Suppose that for all $x_0 \in \pa \Omega\cap\pa F$ and for all parameters $\frac{1}{2}< \zeta < 1$, there exist a dimensional constant $\overline{\tau}>0$ and a radius $r_0>0$ such that the following holds.
Suppose that for some $\nu \in \pa B_1$ and $r<r_0$ we have that for all $x\in B_r(x_0)\cap \partial\Omega$ and $\widetilde{x}\in B_r(x_0)\cap \Omega\cap \partial F$
the density bound \eqref{e:eps-reg-density-condition} holds, 
\begin{equation}
    \label{eq:hyp_IOF}
    \tau \leq \overline{\tau}, \qquad  \nu_\Omega(x_0) \cdot \nu =0, \qquad  \abs{\nu_\Omega(x) - \nu_\Omega(x_0)} \leq {\tau}^{1+\zeta} \qquad \hbox{ and }\qquad \abs{\lambda_F(\widetilde{x})} \leq {\tau}^{1+ \zeta}.
\end{equation}
Moreover, if $\pa F$ is $r \tau$-flat in $B_r(x_0)$  along the $\nu$-direction, namely in $B_r(x_0) \cap \Omega$ it holds
 $$
 \left\{(x-x_0) \cdot  \nu + r a >0\right\}\subset F \subset \left\{(x-x_0) \cdot \nu + r b>0\right\},
 $$
 where $b-a = \tau$, then there exists $\rho \in \left(\sfrac{1}{2}\,,\,1\right)$ such that in $B_{\rho r}(x_0)$, $\pa F$ is $\frac{\tau}{2}$-flat along $\nu'$-direction, namely 
 $$
 \left\{(x-x_0) \cdot  \nu' - \frac{ \rho r \tau}{2} >0\right\}\subset F \subset \left\{(x-x_0) \cdot \nu' +  \frac{ \rho r \tau}{2} >0\right\},
 $$
 and 
    $$
    \abs{\nu - \nu'} \leq C \tau, \qquad \hbox{ and } \qquad \nu'\cdot \nu_{\Omega}(x_0)=\beta(x_0).
    $$
    for some constant $C$.
\end{lemma}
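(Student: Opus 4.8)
The plan is to follow the blow-up (compactness) scheme of De Silva \cite{DeSilva}, using the partial Harnack inequality of \cref{lemma:PH_inequality} to produce compactness, and then to extract the improvement of flatness from the regularity of the linearized problem. First I would normalize: after translating, rescaling, and applying a rotation fixing $\nu$, we may take $x_0=0$, $r=1$, $\nu=e_d$, and $\nu_\Omega(0)=e_1$, so that the tangent plane to $\partial\Omega$ at $0$ is $\{x_1=0\}$ and, for $\tau$ small, $\Omega\cap B_1$ is an $o(\tau)$-perturbation of the half-space $\{x_1>0\}$ (here we use $|\nu_\Omega(x)-\nu_\Omega(0)|\le\tau^{1+\zeta}$ with $\zeta>\tfrac12$). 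By the flatness hypothesis, the density bound \eqref{e:eps-reg-density-condition}, and Allard's theorem, $\partial F$ is, in $B_{1/2}\cap\overline\Omega$, the graph of a function $u$ over (a portion of) $\{x_d=0\}$ with $a\le u\le b$, $b-a=\tau$; set $w:=(u-a)/\tau$, so $0\le w\le 1$.

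Next I would argue by contradiction: suppose the conclusion fails, so there are sequences $\Omega_k$, $F_k$, $\tau_k\to 0$, all satisfying the hypotheses of the lemma at scale $1$ with $\nu_k=e_d$, for which the thesis fails for every admissible $\rho$. Let $w_k:=(u_k-a_k)/\tau_k$ be the normalized height functions on (a half-disk in) $\{x_d=0\}$. Iterating \cref{lemma:PH_inequality} at dyadic scales $50^{-j}$ around interior points (checking at each scale that \eqref{eq:hyp_PH} and \eqref{eq:overline-x} hold, which follows from the flatness together with the bounds $|\lambda_{F_k}|\le\tau_k^{1+\zeta}$ and $|\nu_{\Omega_k}(x)-\nu_{\Omega_k}(0)|\le\tau_k^{1+\zeta}$) yields, exactly as in \cite{DeSilva, velichkov2023regularity}, an oscillation decay and hence a uniform Hölder estimate $[w_k]_{C^{0,\gamma}(\overline{B_{1/4}\cap\{x_1>0\}}\cap\{x_d=0\})}\le C$ for a dimensional $\gamma\in(0,1)$. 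By Arzel\`a–Ascoli, up to a subsequence, $w_k\to w_\infty$ uniformly on compact subsets of $\overline{B_{1/4}\cap\{x_1>0\}}$.

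Then I would identify the limiting equation. The graphs of $u_k$ satisfy $-\Div(\nabla u_k/\sqrt{1+|\nabla u_k|^2})=\lambda_{F_k}$ with $|\lambda_{F_k}|\le\tau_k^{1+\zeta}=o(\tau_k)$, and the conormal (contact-angle) condition of \cref{th:eps_reg} holds on $\partial\Omega_k$, where $\partial\Omega_k$ is $o(\tau_k)$-close to $\{x_1=0\}$. Transferring smooth test functions touching $w_\infty$ to test functions for $u_k$ and dividing by $\tau_k$ — using the strict-subsolution computation \eqref{eq:Deltavt} in the interior and the boundary computation \eqref{eq:boundary_cond} — one checks that $w_\infty$ is a viscosity, hence classical, solution of
\[
\Delta w_\infty=0 \ \ \text{in } B_{1/4}\cap\{x_1>0\}\,, \qquad \partial_1 w_\infty=0 \ \ \text{on } B_{1/4}\cap\{x_1=0\}\,,
\]
the homogeneous Neumann datum coming from the normalization $\beta(x_0)=\nu_\Omega(x_0)\cdot\nu=0$ (and, more generally, from the linearization of the prescribed contact angle). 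By even reflection across $\{x_1=0\}$ and interior elliptic estimates, $w_\infty$ is $C^\infty$ near $0$, and with $p_\infty:=\nabla w_\infty(0)$ one has $p_\infty\cdot e_1=0$ (Neumann) and $p_\infty\cdot e_d=0$ (height function over $\{x_d=0\}$), together with $|w_\infty(x)-w_\infty(0)-p_\infty\cdot x|\le C_0|x|^2$ for $|x|\le\tfrac18$. Fix $\rho\in(\tfrac12,1)$ with $\rho<\tfrac18$ and $C_0\rho\le\tfrac14$; by uniform convergence, for $k$ large $|w_k(x)-w_\infty(0)-p_\infty\cdot x|\le\tfrac12\rho$ on $B_\rho\cap\{x_1>0\}$. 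Undoing $u_k=a_k+\tau_k w_k$, this traps $\partial F_k$ in $B_\rho$ between two parallel hyperplanes at distance $\le\tfrac12\rho\tau_k$ orthogonal to $\nu_k':=(e_d-\tau_k p_\infty)/|e_d-\tau_k p_\infty|$, with $|\nu_k'-e_d|\le C\tau_k$ and $\nu_k'\cdot\nu_{\Omega_k}(0)=0=\beta(0)$ (after absorbing the $o(\tau_k)$ error into a harmless further correction of $\nu_k'$). This is precisely the asserted improvement of flatness at scale $\rho r$, contradicting the standing assumption and proving the lemma.

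The main obstacle is the passage to the limit in Step 3: making rigorous, under the merely uniform convergence $w_k\to w_\infty$, the simultaneous linearization of the mean-curvature operator and of the contact-angle boundary condition, and in particular verifying that the contribution of $\partial\Omega_k$ — which is only $o(\tau_k)$-close to a hyperplane — vanishes in the limit so that $w_\infty$ inherits the homogeneous Neumann condition. The iteration of \cref{lemma:PH_inequality} in Step 2, i.e. checking its hypotheses at every dyadic scale along the sequence to obtain the uniform Hölder bound, is the other technical core, although by now it is standard (see \cite{DeSilva, velichkov2023regularity, de2023short}).
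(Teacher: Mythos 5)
Your proposal follows essentially the same route as the paper's proof: a De Silva--type compactness/blow-up argument in which the partial Harnack inequality (\cref{lemma:PH_inequality}), iterated at dyadic scales, gives uniform H\"older compactness of the normalized height functions, the limit is identified as a viscosity (hence classical) solution of the Laplace equation with homogeneous Neumann condition on the flattened boundary, and the $C^2$ estimate plus Taylor expansion at the origin produces the new direction $\nu'\sim(-\tau\nabla w_\infty(0),1)$ with $|\nu-\nu'|\le C\tau$ and $\nu'\cdot\nu_\Omega(x_0)=0$. The points you flag as delicate (verifying the Harnack hypotheses at each scale and passing the mean-curvature and contact-angle conditions to the limit in the viscosity sense) are exactly the steps the paper carries out, so your argument is correct and matches the paper's.
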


\begin{proof}
First of all, as in the proof of \cref{lemma:PH_inequality},
it is not restrictive to choose $\nu = e_d$ and $\nu_\Omega(x_0)=e_{d-1}$ since there exists a rotation $S:SO(\R^d)\to SO(\R^d)$ such that $S\nu = e_d$ and $S\nu_\Omega(x_0) = e_{d-1}$. Moreover, by scaling and translation, we can assume that $r=1$ and $x_0=0$.

Then, we prove the result by compactness. Let $F_j$ be a sequence of sets $\tau_j$-flat in $B_1$ along the $e_d$-direction, with mean curvature $\lambda_{F_j}$ in $\Omega_j$ such that for all $x\in B_1\cap \partial\Omega$ and $\widetilde{x}\in B_1\cap \Omega\cap \partial F$ it holds
\begin{equation}
    \label{eq:hyp_IOF_assurdo}
    \tau_j\to0 , \qquad  \nu_{\Omega_j}(0) =e_{d-1}, \qquad  \abs{\nu_{\Omega_j}(x) - e_{d-1}} \leq {\tau_j}^{1+\zeta} \qquad \hbox{ and }\qquad \abs{\lambda_{F_j}(\widetilde{x})} \leq {\tau_j}^{1+ \zeta},
\end{equation}
and \eqref{e:eps-reg-density-condition} is valid for all $\pa F_j$ and $\Omega_j$.
By the monotonicity formula, for all $x_0 \in \pa F_j \cap \pa \Omega_j\cap B_{1/2}$, the density of $\partial F_j$ remains bounded at every scale. Precisely
\begin{equation}\label{e:F-jmonotonicityfinalversion}
\H^{d-1}(\partial F_j\cap B_{r_k}(x_0)\cap\Omega_j)\le (1+\overline\tau)\frac{d\omega_dr_k^{d-1}}{2}
\end{equation} 
for every $r_k= 50^{-k}$, where $\overline\tau$ is the constant from \cref{lemma:PH_inequality}. Moreover, if $\pa F_j$ is $\overline\tau r_k$-flat in $B_{r_k}(x_0)$ and \eqref{e:F-jmonotonicityfinalversion} holds, then  \eqref{eq:overline-x} holds with $r=r_k$. Thus, we can apply \cref{lemma:PH_inequality} for every $x_0\in \partial\Omega$ and every $r_k=50^{-k}$ with $k$ such that $\tau_j50^k\le \overline\tau$.
In a similar way, for internal points $x_0\in\Omega$, the thesis is exactly the same changing the considered monotonicity formula.
Hence, if we consider the rescalings
$$\widetilde F_j:=\Phi_{\tau_j}(F_j)\qquad\text{where}\qquad\Phi_{\tau_j}(x',x_d)=(x',\tau_j^{-1}x_d),$$
we have that $\partial \widetilde F_j$ converge in the Hausdorff distance to the graph of an H\"older continuous function
$$\widetilde{u}:B_{\sfrac{1}{2}}' \cap \{x_{d-1}\ge 0\}\to\R\ ,\qquad -1\le \widetilde{u}\le 1, \qquad \widetilde u(0)=0,
    $$ 
    that solves inside
    $$
    \Delta \widetilde{u}= 0 \quad \hbox{in}\quad B_{\sfrac{1}{2}}' \cap \{x_{d-1}> 0\},
    $$
where we remarked that we do not have any right-hand side by the chosen rescaling of the mean curvature $\lambda_F$ in \eqref{eq:hyp_IOF_assurdo}. 
    While, on the boundary, we derive the condition using the hypothesis on $\nu_{\Omega_j}$ in \eqref{eq:hyp_IOF_assurdo}.
   Hence, we get
    $$
   \pa_{d-1} \widetilde{u} = 0 \quad \hbox{on}\quad B_{\sfrac{1}{2}}' \cap \{x_{d-1}= 0\}.
    $$
    Hence, $\widetilde{u}$ solves the following elliptic PDE problem in viscosity sense
    \begin{equation}
        \label{eq:prob_linearizzato}
        \left\{
\begin{aligned}
&\Delta \widetilde{u}= 0 &&\hbox{in}\quad B_{\sfrac{1}{2}}' \cap \{x_{d-1}> 0\}\\
    & \pa_{d-1} \widetilde{u} = 0 &&\hbox{on}\quad B_{\sfrac{1}{2}}' \cap \{x_{d-1}= 0\}\\
    & \norm{\widetilde{u}}_{L^\infty(B'_{\sfrac{1}{2}})} \leq 1,
\end{aligned}
        \right.
    \end{equation}
   that is, $\widetilde{u}$ is harmonic in $B_{\sfrac{1}{2}}' \cap \{x_{d-1}> 0\}$ and if $P$ is a polynomial touching $\widetilde{u}$ from below (resp. from above) at a point $x_0 \in B_{\sfrac{1}{2}}' \cap \{x_{d-1}= 0\}$ then $\pa_{d-1} P(x_0) \leq 0$ (resp. $\pa_{d-1} P(x_0) \geq 0$). By  \cite[Lemma 2.6]{DeSilva}, we get that $\widetilde{u}$ is $C^\infty$ smooth in $B_{\sfrac{1}{2}}'\cap\{x_{d-1}\ge 0\}$ and solves the above elliptic PDE in the classical sense.  
      By classical theory of elliptic PDE \cite{gilbarg1977elliptic}, there exists a positive dimensional constant $C_d >0$ such that
    \begin{equation}
    \label{eq:Schauder}
         \norm{\nabla \widetilde{u}}_{L^\infty\left(B_{\sfrac{1}{2}}' \cap \{x_{d-1}\geq 0\}\right)}+\norm{\tens{D}^2 \widetilde{u}}_{L^\infty\left(B_{\sfrac{1}{2}}' \cap \{x_{d-1}\geq 0\}\right)} \leq C_d,
    \end{equation}
    where $\tens{D}^2$ is the hessian.
  Thus, we have the following Taylor expansion $\forall \rho<\frac12$ and $ \forall\, x \in B_{\rho}' \cap \{x_d\ge 0\}$ 
    $$
    \begin{aligned}
\abs{\widetilde{u}(x) - \nabla \widetilde{u}(0) \cdot x} &\leq  C_d \abs{x}^2 \leq C_d \rho^2,
    \end{aligned}
    $$
    which can be written in the terms of the graph of $\widetilde u$ in  $B_{\sfrac{1}{2}}'\times(-1,1)$ as
    $$
\text{graph}(\widetilde u)\subset \{ \nabla\widetilde u(0)\cdot x'-C_d\rho^2<x_d<\nabla\widetilde u(0)\cdot x'+C_d\rho^2\}.
$$
   Moreover, by the Hausdorff convergence of $\partial F_j$, we have that along the sequence $F_j$
$$
\left\{(-\tau_j\nabla \widetilde u(0),1)\cdot x - \frac{C_d\rho^2}{2}\right\}\subset F_j\subset\left\{(-\tau_j\nabla \widetilde u(0),1)\cdot x + \frac{C_d\rho^2}{2}\right\}.
$$
  Thus, the new direction along which we are flatter is given by
    \begin{equation}
    \nu'_j := \frac{1}{\sqrt{1+\tau_j^2|\nabla \widetilde u(0)|^2}}(-\tau_j \nabla \widetilde{u}(0),1),
\end{equation}
which is still orthogonal to $e_{d-1}$, since $\partial_{d-1}\widetilde u(0)=0$.
Finally, since $\abs{\nabla \widetilde u(0)}\le C_d$, we get that 
$$
|\nu_j-\nu_j'| \leq C_d {\tau_j}\ ,
$$
which concludes the proof. 
\end{proof}

Finally, in order to prove \cref{th:eps_reg} in each point $x \in \pa F \cap \pa \Omega$, we iterate the improvement of flatness (\cref{lemma:IOF})). The following remark is useful in such a direction.
\begin{remark}[Improvement of flatness in terms of the rescalings of $F$]
\label{rem:IOFinB1}
 
The statement of \cref{lemma:IOF} can be reformulated as follows. Let $F$ and $\Omega$ as in \cref{th:eps_reg}. 
Suppose that for all $x_0 \in \pa \Omega\cap\pa F$ and for all parameters $\sfrac{1}{2}< \zeta < 1$, there are dimensional constants $k \in (0,1)$, $\sigma \in (0,1)$, $C >0$, $\overline{\tau}>0$ and a radius $r_0>0$ 
such that if for some $\nu \in \pa B_1$, for all $r \leq r_0$ and for all $x \in B_{1}(x_0)$ we have
\begin{equation*}
    \tau \leq \overline{\tau}, \qquad  \nu_\Omega(x_0) \cdot \nu =0, \qquad  \abs{\nu_\Omega(x) - \nu_\Omega(x_0)} \leq {\tau}^{1+\zeta} \qquad \hbox{ and }\qquad \abs{\lambda_F(x)} \leq {\tau}^{1+ \zeta},
\end{equation*}
and
\[
\H^{d-1}\left(\left(\frac{\partial F - x_0}{r}\right)\cap B_1\cap\Omega\right)\le \frac{(1+\overline\tau)d\omega_d}{2}.
\] 
Then, if $\pa F$ satisfies the following condition along the $\nu$-direction in $B_1$
 $$
 \left\{x\cdot  \nu - \tau >0\right\}\subset F_{r,x_0} \subset \left\{x\cdot \nu +\tau>0\right\},
 $$
 where for any $r>0$ with $r \leq r_0$
 $$
 F_{r,x_0}:= \frac{F - x_0}{r},
 $$
there exists a new direction $\nu'\in \pa B_1$ such that for all $x \in B_1\cap \Omega$ it holds 
 $$
 \left\{x\cdot  \nu' - \sigma \tau >0\right\}\subset F_{kr, x_0} \subset \left\{x\cdot \nu' +  \sigma \tau  >0\right\},
 $$
with
    $$
    \abs{\nu - \nu'} \leq C \tau, \qquad \hbox{ and } \qquad \nu'\cdot \nu_{\Omega}(x_0)=0.
    $$
\end{remark}

By replacing the statement of \cref{lemma:IOF} with \cref{rem:IOFinB1}, which gives the same thesis of the improvement of flatness but at scale $1$, there are just two missed steps to conclude \cref{th:eps_reg}: uniqueness of the blow-up of $\pa F$ with its right decay and the use of classical interior Allard's regularity theorem to show that $\pa F$ touches $\pa \Omega$ in a $C^{1, \sigma}$ way. Since these two points are nothing but a variation of \cite[Chapter 8]{velichkov2023regularity}, we condense everything in the same proof.

\begin{proof}[Proof of Theorem \ref{th:eps_reg}]
We divide the proof in two steps.
\\
\\
{\em Step 1. We study the blow-up convergence.} Assume that $\pa F_{r,x_0}$ is $\tau$-flat in $B_1$ for some $x_0 \in \pa \Omega$, $\tau \leq \overline{\tau}$ and $r\leq r_0$ given by the density estimate. Then, $\pa F_{r, x_0}$ is $\sigma \tau$-flat in $B_{\sfrac{1}{2}}$ with $\sigma <1$. Using the same notation as in \cref{rem:IOFinB1}, we denote with $F_j:= F_{r_j, x_0}$ the blow-up sequence of the sets and $\displaystyle r_j:= \frac{k^j}{2}$. Thus, iterating the Improvement of flatness Lemma in $B_1$, 
 there exist a sequence of directions $\nu_j \in \pa B_1$ such that
    $$
    \abs{\nu_j -\nu_{j+1}} \leq C \overline{\tau} \sigma^j \qquad \text{for all}\qquad j \in \N\,.
    $$
    In particular, for $1 \leq j < i$, we have
    $$
    \begin{aligned}
        \abs{\nu_j- \nu_i} &\leq \sum_{k= j}^{i-1} \abs{\nu_k -\nu_{k+1}} \leq \sum_{k=j}^{i-1} C \overline{\tau} \sigma^k\leq C \overline{\tau} \sum_{k=j}^{\infty} \sigma^k = C \overline{\tau} \frac{\sigma^j}{1-\sigma}.
    \end{aligned}
    $$
    Hence, since the series converge, there exists
    $$
    \nu = \lim_{j \to \infty} \nu_j,
    $$
    with $\nu \in \pa B_1$. Finally, we end up with
    $$
    \abs{x \cdot \nu- \left(x\cdot \nu_j \pm \overline{\tau} \sigma^j\right)} \leq \left(1 + \frac{C}{1-\sigma}\right) \overline{\tau} \sigma^j,
    $$
    which holds for all $x \in B_1$. Thus, choosing properly $\overline \tau$ and the radius $r_0$, we can conclude that for all $y_0\in B_r(x_0)\cap\partial F\cap\partial \Omega$, the blow-up of $\pa F$ is unique and precisely it is the half-plane $\{x\cdot \nu(y_0)= 0\}$ with the property that $\nu(y_0)\perp\nu_{\Omega}(y_0)$. Moreover, just by interpolation, its convergence is polynomial, i.e. there are constants $\alpha_1>0$ and $C_1>0$ such that the oscillation of $\nu(y_0)$ is controlled as 
\begin{equation}
    \label{eq:oscillazione_normali}
    |\nu(y_0)-\nu(z_0)|\le C_1|y_0-z_0|^{\alpha_1}\qquad\text{for every}\qquad y_0,z_0\in \partial\Omega\cap\partial F\cap B_{\sfrac{r}{2}}(x_0),
\end{equation} 
and for every $s<\sfrac{r}{2}$ it holds
\begin{equation}
    \label{eq:osc_insieme}
    \left\{x\,:\,(x-y_0) \cdot  \nu(y_0) - C_1s^{1+{\alpha_1}}>0\right\}\subset F \subset \left\{x\,:\,(x-y_0) \cdot \nu(y_0) + C_1s^{1+{\alpha_1}}>0\right\}\quad\hbox{ in}\quad\Omega\cap B_s(y_0),
\end{equation}
which is true for every $y_0\in \partial\Omega\cap\partial F\cap B_{\sfrac{r}{2}}(x_0)$.
\\
\\
{\em Step 2. Epsilon-regularity.}
 Combining \eqref{eq:oscillazione_normali} and \eqref{eq:osc_insieme}, we get that $\partial\Omega\cap\partial F$ is a $C^{1,\alpha_1}$ graph, in the manifold $\partial\Omega$, in the direction $\nu$. To complete the proof, we need to show that $\partial F\cap\Omega$ touches the boundary $\partial\Omega\cap\partial F$ in a $C^{1,\alpha}$ way for some $\alpha >0$. Indeed, given an interior point $y_I\in B_{\sfrac{r}{2}}(x_0)\cap\Omega\cap\partial F$ and setting $\rho:=|y_B-y_I|$, where $y_B$ is the projection of $y_I$ on $\partial\Omega\cap\partial F$, we have that in $B_{2\rho}(y_B)$ the boundary $\partial F$ is $C^{1+\alpha_1}(B_{2\rho}(y_B))$-flat and by the boundary monotonicity formula it holds true the following estimate
 \begin{equation}
    \label{eq:stimadensita2}
    \H^{d-1}(\partial F\cap B_{2\rho}(y_B)\cap\Omega)\le \frac{(1+\overline\tau)d\omega_d(2\rho)^{d-1}}{2}.
 \end{equation}
Since $\partial F$ is flat in $B_{2\rho}(y_B)$, there are $\overline C>0$ and $\gamma>0$, depending on the $C^{1,\sigma}$ regularity of the fixed boundary $\partial\Omega$ and the constants $\alpha_1$ and $C_1$ from \eqref{eq:oscillazione_normali} and \eqref{eq:osc_insieme},  such that 
\begin{equation}
\label{eq:stimadensita}
    \H^{d-1}(B_{2\rho}(y_B)\setminus B_{\sfrac{\rho}{2}}(y_I)\cap\partial F\cap\Omega)\ge (1-\overline C\rho^\gamma)\H^{d-1}(B_{2\rho}(y_B)\setminus B_{\sfrac{\rho}{2}}(y_I)\cap \boldsymbol{\pi}\cap\Omega),
\end{equation}
where $\boldsymbol{\pi}$ is any plane passing through $y_B$ and $y_I$.
As a consequence of the density estimates \eqref{eq:stimadensita2} and \eqref{eq:stimadensita}, in $B_{\sfrac{\rho}{2}}(y_I)\subset\Omega$, the density of $\partial F$ is controlled by
\[
\H^{d-1}(\partial F\cap B_{\sfrac{\rho}{2}}(y_I))\le (1+\overline\tau+\overline C\rho^\gamma)d\omega_d\left(\frac{\rho}{2}\right)^{d-1}.
\] 
Thus, in $B_{\sfrac{\rho}{2}}(y_I)$, we can apply the classical Allard's theorem (\cite{Allard_interior,de2023short}). In particular, $|\nu(y_I)-\nu(y_B)|\leq C\rho^\alpha$, for some $\alpha >0$ with $\alpha = \inf\{\alpha_1, \gamma\}$ and $C = \inf\{C_1, \overline C\}$. This is exactly the thesis: $\partial F\cap\Omega$ up to the boundary $\partial\Omega$ is a  $C^{1,\alpha}$ graph over $\bp$.
 \end{proof}

\section{Comparison principle for classical solutions}
\label{sec:graph_ordered}

Once \cref{th:eps_reg} holds, the final step to conclude the proof is to show that the obtained regular graphs up to the boundary of $\Omega$ are ordered with respect to the volume and symmetric with respect to $\bp$. 
The main result of this section is the uniform curvature 
estimate up to the boundary \cref{prop:C2alpha_graph}, which will allow us to use comparison arguments, \cref{th:ordered_graphs} and \cref{t:symmetry}. 

\begin{proposition}
\label{prop:C2alpha_graph}
    Let $\pa S_+$ and $\pa S_-$ be as in \cref{t:reg-C1alpha}. Then, there are compact domains $D_\pm$ with $C^\infty$ boundary such that $\partial S_+$ and $\partial S_-$ are $C^{\infty}$ graphs up to the boundary respectively over $D_+$ and $D_-$. Precisely, there exists $\lambda>0$ such that the functions $u^\pm$ satisfy
\begin{equation}
    \label{e:minimal_surfaces_equation}
      \left\{
  \begin{aligned}
  &-{\rm div}\, \left(\frac{\nabla u^{-}}{\sqrt{1 + \abs{\nabla u^{-}}^2}}\right) = -\lambda &&\hbox{ on } D_-\,,\\
  &\left( \nabla u^- , -1 \right) \cdot \nu_W = 0  &&\hbox{ on } \partial D_-\,,
  \end{aligned}
  \right.
  \qquad \quad
  \left\{
  \begin{aligned}
  &-{\rm div}\, \left(\frac{\nabla u^{+}}{\sqrt{1 + \abs{\nabla u^{+}}^2}}\right) = \lambda &&\hbox{ on } D_+\,,\\
  &\left( -\nabla u^+ , 1 \right) \cdot \nu_W = 0 &&\hbox{ on } \partial D_+\,,
  \end{aligned}
  \right.
\end{equation}
where $\nu_W$ is the outer unit normal vector field to $\pa W$. Moreover, fixed $k\ge 2$ and $\omega>0$ there is $\overline\eps=\overline\eps(k,\omega)>0$ such that for every $\eps\le\overline\eps$ it holds that $\|u_\pm\|_{C^k(D_\pm)}\le \omega$. 
\end{proposition}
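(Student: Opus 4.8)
The plan is to bootstrap the $C^{1,\alpha}$ regularity of \cref{t:reg-C1alpha} to $C^\infty$, producing estimates that are uniform in the volume $\eps$ and — crucially, for the comparison arguments of \cref{sec:graph_ordered} — in the thickness $\delta$ of the tube. Away from the fixed boundary this is routine: on any ball compactly contained in $\widetilde\Omega\cap\pa S_+$ the graph function $u^+$ solves $-{\rm div}\,(\nabla u^+/\sqrt{1+|\nabla u^+|^2})=\lambda$, which — since $\|\nabla u^+\|_{L^\infty}$ is small by \cref{t:reg-C1alpha} — is a uniformly elliptic quasilinear equation with smooth coefficients; interior Schauder estimates and iteration give $u^+\in C^\infty$ in the interior, with all interior $C^k$ norms tending to $0$ as $\eps\to 0$ because $\|u^+\|_{C^1}\to0$ and $\lambda\le\Pi\eps$ by \cref{l:upper_bound_curvatura_eps}. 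Everything therefore reduces to regularity near the contact set $\pa S_+\cap\pa W$, where the principal curvatures of $\pa W$ (of order $\delta^{-1}$) demand a more careful treatment.

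Near the boundary I would first rescale $\R^d$ by $\delta^{-1}$, so that the tube has unit thickness and, by \cref{lemma:Estanellastriscia} together with \cref{t:reg-C1alpha}, $K_+$ is $\sigma(\eps)$-flat in $B_1$ with $\sigma(\eps)\to0$. Next I would straighten $\pa W$ by the parametrisation $\phi(s,t)=\gamma(s)+t\nu_\gamma(s)$, $s\in B''\subset\R^{d-2}$, $t\in(-\delta_0,\delta_0)$, so that $W=\{(\phi(s,t),x_d):t^2+x_d^2\le1\}$ has flat boundary, and set $v(s,t):=u^+(\phi(s,t))$. Then, using the $C^{1,\alpha}$ regularity from \cref{sec:regolarita}, I would introduce the hodograph-type unknown $\Theta(s,\rho)$ through
\[
v\big(s,(1+\rho)\cos\Theta(s,\rho)\big)=(1+\rho)\sin\Theta(s,\rho),
\]
whose geometric meaning is to parametrise each point of $K_+$ by the foot $\gamma(s)$ of its projection on $\Gamma$, its distance $\rho$ from $W$, and the angle $\Theta$ it forms with $\bp$. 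A direct computation recasts \eqref{e:minimal_surfaces_equation} as a quasilinear Neumann problem of the form
\begin{equation*}
\left\{
\begin{aligned}
&{\rm div}\Big(\tfrac{N}{\sqrt{1+M}}\,(\tens{I}+\tens{T}\tens{B}\tens{T})\nabla\Theta\Big)=\tfrac{G}{(1+\rho)^2}\quad&&\hbox{in }B''\times(0,\delta_0),\\
&e_{d-1}\cdot(\tens{I}+\tens{T}\tens{B}\tens{T})\nabla\Theta=0\quad&&\hbox{on }B''\times\{0\},
\end{aligned}
\right.
\end{equation*}
with $M,N$ scalar functions of $(s,\rho,\Theta,\nabla\Theta)$, $\tens{T}=\tens{T}(\Theta)$ diagonal, and $\tens{B}$ a symmetric matrix depending on the change of variables $\phi$; the orthogonality of $K_+$ with $\pa W$ (\cref{lemma:pernonesseretroppolunghi}) becomes exactly the stated boundary condition, and $\lambda\le\Pi\eps$ enters only through $G$. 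Since $\Theta$ and $\nabla\Theta$ are small (being controlled by $\|\nabla u^+\|_{L^\infty}$), $M$ is small and $\tens{T}$ is close to zero, so the operator is uniformly elliptic and the boundary operator is a regular oblique-derivative condition, with ellipticity and obliqueness constants independent of $\delta$.

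The key observation — and the reason this works uniformly in $\delta$ — is that $G=G(s,\rho,\Theta,\nabla\Theta)$ is a smooth expression vanishing to second order in $(\Theta,\nabla\Theta)$, hence bounded by a dimensional constant times the square of the $C^1$ norm of $\Theta$, which is in turn controlled by the flatness of $K_+$ at scale $1$; so, choosing $\eps$ small, $\|G\|_{C^{0,\alpha}}\le C(d)\,\sigma(\eps)$ with $C(d)$ independent of $\delta$. From here I would invoke the Schauder theory for oblique-derivative problems \cite{gilbarg1977elliptic} to get $\|\Theta\|_{C^{2,\alpha}}\le C(d)\,\sigma(\eps)$ up to $B''\times\{0\}$, then differentiate the equation tangentially and iterate — every coefficient being a smooth function of smooth data — to obtain $\|\Theta\|_{C^k}\le C(d,k)\,\sigma(\eps)$ for every $k$. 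Undoing the hodograph transform and the change of variables $\phi$ and rescaling back by $\delta$, this yields $u^+\in C^\infty(\overline D_+)$, shows that $\pa D_+$ (the projection onto $\bp$ of the transversal intersection $\pa S_+\cap\pa W$) is a $C^\infty$ hypersurface converging in $C^\infty$ to $\pa D_0$, and gives $\|u^+\|_{C^k(D_+)}\le C(d,k)\,\sigma(\eps)$; in particular $\|u^+\|_{C^k(D_+)}\le\omega$ once $\eps\le\overline\eps(k,\omega)$, and the same argument applies verbatim to $S_-$. The hard part is precisely securing the $\delta$-uniformity of all these constants: in the original coordinates a direct Schauder estimate degenerates as $\delta\to0$ because of the blow-up of the curvatures of $\pa W$, and the hodograph transform is tailored so that after the preliminary rescaling all the troublesome $\delta$-dependence is confined to the single term $G/(1+\rho)^2$, where the denominator is comparable to $1$ and $G$ is quadratically small in the flatness — checking carefully that the transformed coefficients and the bound on $G$ are genuinely independent of $\delta$ is the technical heart of the argument.
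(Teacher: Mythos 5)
Your proposal follows essentially the same route as the paper's proof: the parametrisation $\phi(s,t)=\gamma(s)+t\nu_\gamma(s)$ to flatten $\partial W$, the hodograph-type unknown $\Theta$ defined through $v\big(s,(\delta+\rho)\cos\Theta\big)=(\delta+\rho)\sin\Theta$, the resulting quasilinear Neumann problem with right-hand side $G/(\delta+\rho)^2$, and classical Schauder estimates once $G$ is controlled by the $C^{1,\alpha}$ smallness of $\Theta$ provided by \cref{t:reg-C1alpha}. The only cosmetic differences are your preliminary rescaling by $\delta^{-1}$ and your claim that $G$ vanishes quadratically in $(\Theta,\nabla\Theta)$, whereas the paper records only the linear bound $|G|\le c_1\abs{\Theta}+c_2\abs{\nabla\Theta}$, which already suffices for the Schauder iteration.
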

\begin{proof}
We divide the proof in some steps.
\\
\\
{\it Step 1. Change of variables.} Let $B''$ be a ball in $\R^{d-2}$ and let 
$$\gamma:B''\to\R^{d-1}$$
be a smooth parametrization of $\Gamma$. We define the map
$$\phi:= \left\{\begin{aligned}
    B''\times\R&\to\R^{d-1}\\
\phi(s,t)&\mapsto\gamma(s)+t\nu_\gamma(s);
\end{aligned}
\right. \qquad   \qquad 
\Phi: = \left\{\begin{aligned}
    B''\times\R\times\R&\to\R^d\\
\Phi(s,t,x_d)&\mapsto(\phi(s,t),x_d),
\end{aligned}
\right. 
$$
where $\nu_\gamma(s)$ is the normal to $\Gamma$ at the point $\gamma(s)$ and $s=(s_1,\dots,s_{d-2})\in B''$. We notice that locally the tubular neighborhood $I_\delta(\Gamma)$ in terms of these new coordinates is given precisely by:
$$
\begin{aligned}
    W&=I_\delta(\Gamma)\cap \Phi(B''\times (-\delta_0,\delta_0)\times (-\delta_0,\delta_0))\\
    &= \Big\{\big(\phi(s,t),x_d\big)\ :\ s\in B'',\ t\in (-\delta_0,\delta_0),\,\ x_d\in (-\delta_0,\delta_0), \,\ t^2+x_d^2\le \delta^2\Big\}.
\end{aligned}
$$
We next define the function 
$$
v:= \left\{\begin{aligned}
    \phi^{-1}(D_+)&\to \R\\\
v(s,t)&\mapsto u_+(\phi(s,t)),
\end{aligned}
\right. $$
and we compute 
\begin{align}
\nonumber
\int_{D_+}\sqrt{1+|\nabla u_+|^2}\,dx'&=\int_{\phi^{-1}(D_+)}\sqrt{1+\nabla v(s,t)\cdot \tens{A}(s,t) \nabla v(s,t)}\,\Big(\det \tens{A}(s,t)\Big)^{-1}\,ds\,dt\,,\\
\label{eq:funzionale-grafici-F}
&=\int_{\phi^{-1}(D_+)}F(s,t,\nabla v(s,t))\,ds\,dt\,,
\end{align}
where the symmetric $(d-1)\times (d-1)$ matrix $\tens{A}$ and the function $F:B''\times \R\times \R^{d-1}\to\R$ are given by
$$\tens{A}(s,t):=(\tens{D}\phi)^{-T}(\tens{D}\phi)^{-1}\qquad\text{and}\qquad F(s,t,p):=\sqrt{1+p\cdot \tens{A}(s,t) p\,}\,\Big(\det \tens{A}(s,t)\Big)^{-1}.$$
Since $\phi(s,t)$ is linear in $t$, we have that $\tens{A}(s,t)$ is analytic in $t$ and 
$$\tens{A}(s,t)=\sum_{m=0}^{+\infty}\tens{A}_m(s)t^m \,,$$
where $\tens{A}_m$ are smooth symmetric $(d-1)$-matrices such that $\|\tens{A}_m\|_{L^\infty}\le C^m$ for some fixed constant $C$ depending on $\gamma$. 
\\
\\
{\it Step 2. Hodograph transform.} We now perform an hodograph-type transform that allows to transform the Robin-type boundary condition in \eqref{e:minimal_surfaces_equation} into a homogeneous Neumann condition.  
Precisely,  we define the function $\Theta: B''\times [0,\delta_0)\to\R$ through the identity:
$$
v\Big(s,(\delta + \rho) \cos\Theta(s, \rho)\Big) =(\delta + \rho) \sin\Theta(s, \rho). 
$$
The existence of such a function $\Theta$ follows from the $C^{1,\alpha}$ regularity of $v$ and the Implicit Function Theorem. Moreover, by \cref{t:reg-C1alpha}, for every $\omega>0$, the following estimate holds whenever $\eps$ is chosen small enough:
\begin{equation}\label{e:smallness-norms-Theta}
\|\Theta\|_{C^{1,\alpha}(B''\times[0,\delta_0))}\le \omega.
\end{equation}
Next, we compute the energy in terms of $\Theta$. By differentiating in $s$ and in $\rho$, we obtain the system 
$$\begin{cases}
\nabla_sv\big(s,(\delta + \rho) \cos\Theta\big)-(\delta+\rho)\sin\Theta\,\partial_tv\big(s,(\delta + \rho) \cos\Theta\big)=(\delta+\rho)\nabla_s\Theta\cos\Theta\\
\partial_tv\big(s,(\delta + \rho) \cos\Theta\big)\,\Big(\cos\Theta-(\delta + \rho)\partial_\rho\Theta\sin\Theta\Big)=\sin\Theta+(\delta+\rho)\partial_\rho\Theta\cos\Theta,
\end{cases}$$
where $\Theta$, $\nabla_s\Theta$ and $\partial_\rho\Theta$ are computed in $(s,\rho).$ This gives
\begin{equation}
\label{e:nablav}
    \begin{aligned}
    \nabla v&(s, (\delta + \rho) \cos\Theta) \\
    &= \frac{1}{\cos \Theta -(\delta + \rho) \pa_\rho \Theta \sin \Theta} \begin{pmatrix}
        &(\delta + \rho) \nabla_s \Theta\\
        &\sin \Theta + (\delta + \rho)\pa_\rho \Theta \cos \Theta
    \end{pmatrix}\\
    &= \frac{1}{\cos \Theta -(\delta + \rho)\pa_\rho \Theta  \sin \Theta}\underbrace{\begin{pmatrix}
        &(\delta + \rho) \nabla_s \Theta\\
        &(\delta + \rho) \pa_\rho \Theta \cos \Theta
    \end{pmatrix}}_{=:q} + \frac{1}{\cos \Theta -(\delta + \rho) \sin \Theta \,\pa_\rho \Theta}\, \sin\Theta\,e_{d-1}\\\
    &= \frac{1}{\cos \Theta -(\delta + \rho)\pa_\rho \Theta  \sin \Theta}\left(q + \sin \Theta\, e_{d-1}\right).
\end{aligned}
\end{equation}
By applying the change of variables
$$(s,t)=(s, (\delta + \rho) \cos\Theta(s,t)),$$
in the energy functional \eqref{eq:funzionale-grafici-F}, we get that the energy can be written in terms of the function $\Theta=\Theta(s,\rho)$ as follows
\begin{equation}
\label{eq:cambiodivariabile_funzionale}
    \begin{aligned}
    &\int_{\B''\times [0, \delta_0)} F\big(s, (\delta + \rho) \cos\Theta, \nabla v(s, (\delta + \rho) \cos\Theta)\big)\left(\cos\Theta - (\delta + \rho) \pa_\rho \Theta \sin\Theta\right)\, dsd\rho\\
    =&\int_{\B''\times [0, \delta_0)} \sqrt{1 + \nabla v \cdot \tens{A}(s,(\rho + \delta) \cos\Theta)  \nabla v\,} \,\,({\rm det}\,\tens{A}(s,(\rho + \delta) \cos\Theta))^{-\sfrac{1}{2}}\left(\cos\Theta - (\delta + \rho) \pa_\rho \Theta \sin\Theta\right) \, dsd\rho,
\end{aligned}
\end{equation}
where $\nabla v$ is given by \eqref{e:nablav} and is computed in $(s, (\delta + \rho) \cos\Theta)$.
\\
\\
{\it Step 3. The function $\Theta$ is $C^{2,\alpha}$.} We set 
\begin{equation}
    \label{e:matrixA}
    \tens{A}(s,t):= \tens{I} + \tens{B}(s,t)\quad\text{where}\quad \tens{B}(s,t)=\tens{A}_0(s)-\tens{I}+\sum_{m=1}^{+\infty}t^m\tens{A}_m(s).
\end{equation}
Thus, using \eqref{e:nablav} and \eqref{e:matrixA},  \eqref{eq:cambiodivariabile_funzionale} simplifies into
$$
\begin{aligned}
    &\sqrt{1 + \nabla v \cdot \tens{A}(s,(\rho + \delta) \cos\Theta)  \nabla v\,}\left(\cos\Theta - (\delta + \rho) \pa_\rho \Theta \sin\Theta\right)\\
    =&\sqrt{\left(\cos\Theta - (\delta + \rho) \pa_\rho \Theta \sin\Theta\right)^2 + q\cdot \tens{A} q + 2 q\cdot  (\tens{A} e_{d-1}) \sin \Theta+  \sin^2 \Theta (e_{d-1}\cdot \tens{A}e_{d-1})}\\
    =& \sqrt{1+\left((\delta + \rho) \pa_\rho \Theta \sin\Theta \right)^2 + |q|^2 + q\cdot  \tens{B} q + 2 q\cdot (\tens{B} e_{d-1}) \sin\Theta+ \sin^2\Theta (e_{d-1}\cdot \tens{B}e_{d-1}) }\\
    =& \sqrt{1+(\delta + \rho)^2 \abs{\nabla \Theta}^2 + q\cdot \tens{B} q + 2 q\cdot (\tens{B} e_{d-1}) \sin\Theta + \sin^2\Theta (e_{d-1}\cdot \tens{B}e_{d-1})},
\end{aligned}
$$
so we get that
\begin{align}
\nonumber
    \int_{\B''\times [0, \delta_0)}& \sqrt{1 + \nabla v \cdot \tens{A}(s,(\rho + \delta) \cos\Theta)  \nabla v\,} \,\,{\rm det}(\tens{A}(s,(\rho + \delta) \cos\Theta))^{-\sfrac{1}{2}}\left(\cos\Theta - (\delta + \rho) \pa_\rho \Theta \sin\Theta\right) \, dsd\rho\\
    \nonumber
    &=\int_{B''\times [0, \delta_0)} \sqrt{1+(\delta + \rho)^2 \abs{\nabla \Theta}^2 + q\cdot \tens{B} q + 2 q \cdot(\tens{B} e_{d-1}) \sin\Theta + \sin^2\Theta (e_{d-1}\cdot \tens{B}e_{d-1})}\,\,({\rm det}\,\tens{A})^{-\sfrac{1}{2}}\, dsd\rho\\
    \label{e:funGdiGiulia}
    &=: \int_{B''\times [0, \delta_0)} \sqrt{1 + M(s, \rho, \Theta, \nabla \Theta)} N(s, \rho,\Theta)\,dsd\rho.
\end{align}
Computing the outer variation of the energy functional \eqref{e:funGdiGiulia} we get that $\Theta$ satisfies an equation of the form
\begin{equation}
\label{eq:EL_bordo_dritto}
    \left\{
\begin{aligned}
    &{\rm div}\left(\frac{\nabla_pM}{2 \sqrt{1 + M}} \, N\right) = \frac{\pa_\Theta M}{2 \sqrt{1 + M}}\, N + \sqrt{1 + M}\, \pa_\Theta N \quad&&\hbox{ in } B''\times (0,\delta_0),\\
    & e_{d-1}\cdot \nabla_pM= 0\quad &&\hbox{ on } B''\times\{0\},
\end{aligned}
\right.
\end{equation}
where $\nabla_pM(s,\rho,\Theta,p)$ is the gradient of $M$ with respect to the last variable.

We notice that, if $\tens{T}=\tens{T}(\Theta)$ is the $(d-1)$-diagonal matrix 
$$\tens{T}(\Theta)={\rm diag}(1,\dots,1,\cos\Theta),$$ 
we can write the vector $q$ as 
$$q:=(\delta+\rho)\tens{T}(\Theta)\nabla\Theta,$$
so we get 
\begin{align*}
q\cdot \tens{B} q + 2 q \cdot(\tens{B} e_{d-1}) \sin\Theta&=(\delta+\rho)^2 \nabla \Theta\cdot \left(\tens{T} \tens{B}\tens{T}\right) \nabla\Theta  + 2 \nabla \Theta\cdot(\tens{T}\tens{B} e_{d-1}) \sin\Theta
\end{align*}
where $\tens{B}=\tens{B}(s,(\delta+\rho)\cos\Theta)$ and $\tens{T}=\tens{T}(\Theta)$. In particular, this implies that 
$$\nabla_pM(s,\rho,\Theta,\nabla \Theta)=2(\delta+\rho)^2\Big(\tens{I}+\tens{T} \tens{B}\tens{T}\Big)\nabla \Theta+2 (\delta+\rho) \sin\Theta\,\tens{T}\tens{B} e_{d-1},$$
so we can write \eqref{eq:EL_bordo_dritto} in the form 
\begin{equation*}
    \left\{
\begin{aligned}
    &{\rm div}\left(\frac{N}{\sqrt{1 + M}}{\big(\tens{I}+\tens{T}\tens{B}\tens{T}\big) \nabla \Theta} \right) = \frac{G}{(\delta+\rho)^2} \quad&&\hbox{ in } B''\times (0,\delta_0),\\
    & e_{d-1}\cdot (\tens{I}+\tens{T} \tens{B}\tens{T})\nabla \Theta= 0\quad &&\hbox{ on } B''\times\{0\},
\end{aligned}
\right.
\end{equation*}
where the right-hand side $G=G(s,\rho,\Theta,\nabla\Theta)$ can be estimated as follows:
$$
|G|=\left|\frac{\pa_\Theta M}{2 \sqrt{1 + M}}\, N + \sqrt{1 + M}\, \pa_\Theta N + 2(\delta + \rho) \,{\rm div} \big(\sin \Theta\, \tens{T}\tens{B} e_{d-1}\big)\right| \leq c_1\abs{\Theta} + c_2\abs{\nabla \Theta}.
$$
Finally, using \eqref{e:smallness-norms-Theta} and the classical $C^{k,\alpha}$ Schauder estimates, we get the claim.  
\end{proof}

Thus, we are in position to construct the foliation.

\begin{theorem}
    \label{th:ordered_graphs}
    Let $\pa S_+$ be as in \cref{prop:C2alpha_graph}. Then, as functions of $\eps$, the sets $D_+^\e$, the functions $u^\e_+$ and their mean curvatures are strictly increasing.
\end{theorem}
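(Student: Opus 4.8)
The plan is to compare, for $0<\eps_1<\eps_2\le\eps_0$, the two free boundaries $K_i:=K_+^{\eps_i}=\graph(u_i)$ over $D_i:=D_+^{\eps_i}$, with constant mean curvature $\lambda_i\in(0,\Pi\eps_i]$, by a vertical sliding argument combined with the maximum principle, exactly in the spirit of \cref{lemma:pernonesseretroppolunghi} and \cref{th:circonferenza3D}. Throughout I will use that, by \cref{prop:lunga!!!}, \cref{lemma:pernonesseretroppolunghi}, \cref{l:upper_bound_curvatura_eps} and \cref{rem:nocollapsing}, each $K_i$ is a smooth graph meeting $\partial W$ orthogonally with $\vol(E^{\eps_i})=\eps_i$; that by \cref{prop:C2alpha_graph} all sectional curvatures of $K_i$ are bounded by a constant $\omega$ which is small and, crucially, independent of $\delta$ once $\eps_0$ is small; and that by \cref{t:symmetry} (together with the mirror $x_d\mapsto-x_d$ of the analysis) the optimal set is symmetric about $\bp$, so that the region of $\Omega$ trapped between $\bp$ and $K_+^{\eps_i}$ has volume $\eps_i/2$. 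A preliminary observation is that vertical translates of $K_i$ \emph{detach} from the wire: since $\Gamma\subset\bp$ one has $\dist(p,\Gamma)^2=\dist(\mathbf p(p),\Gamma)^2+p_d^2$, while $\dist(x',\Gamma)^2+u_i(x')^2\ge\delta^2$ on $\overline{D_i}$ with equality exactly on $\partial D_i$; hence $K_i+te_d\subset\Omega$ for every $t>0$, so any contact of such a translate with $\cl(K_2)$ occurs at an \emph{interior} point of both surfaces.

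First I would order the curvatures. Translating $K_1$ upward, for $t$ large $K_1+te_d$ lies strictly above $K_2$, so I set $t_1^*\ge0$ to be the least $t$ with $K_1+se_d$ weakly above $K_2$ for all $s\ge t$. If $t_1^*=0$ then $u_1\ge u_2$ where both are defined, so the region under $K_+^{\eps_1}$ contains that under $K_+^{\eps_2}$ and, using the symmetric conclusion for $K_-$, one gets $E^{\eps_1}\supseteq E^{\eps_2}$, hence $\eps_1\ge\eps_2$, a contradiction; therefore $t_1^*>0$, and by the preliminary observation the contact at $t=t_1^*$ is interior, with $K_1+t_1^*e_d$ tangent to and above $K_2$. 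Comparing the two mean curvature equations there (the operator being degenerate elliptic and monotone in the Hessian) gives $\lambda_1\le\lambda_2$, and $\lambda_1=\lambda_2$ would force, by Sch\"atzle's maximum principle \cite[Theorem 6.2]{schatzle2004quadratic}, $K_1+t_1^*e_d\equiv K_2$, which is impossible because the translate has no point on $\partial W$ whereas $K_2$ has its boundary there. Thus $\lambda_1<\lambda_2$.

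Next I would order the graphs and the domains. Translating $K_2$ upward and defining $t_2^*$ symmetrically, if $t_2^*>0$ the first (interior) contact yields $\lambda_2\le\lambda_1$, contradicting the previous step; hence $t_2^*=0$, i.e. $u_1\le u_2$ and $D_1\subseteq D_2$. Then $w:=u_2-u_1\ge0$ solves on $D_1$ an equation $Lw=\lambda_1-\lambda_2<0$ for a linear elliptic operator $L$ obtained by linearizing the mean curvature operator along the segment between $u_1$ and $u_2$, so the strong maximum principle gives $u_1<u_2$ in the interior of $D_1$. To upgrade $D_1\subseteq D_2$ to a strict inclusion I argue by contradiction: if $D_1=D_2=:D$, then on $\partial D$ both $u_i$ equal $\sqrt{\delta^2-\dist(\cdot,\Gamma)^2}$, so $\partial K_1=\partial K_2$, and since both surfaces meet $\partial W$ orthogonally along this common $(d-2)$-curve they share the tangent plane at every boundary point, whence $\nabla u_1=\nabla u_2$ on $\partial D$; integrating the two mean curvature equations over $D$ then gives $\lambda_1|D|=\lambda_2|D|$, contradicting $\lambda_1<\lambda_2$. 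This establishes that $\eps\mapsto D^\eps_+$, $\eps\mapsto u^\eps_+$ and $\eps\mapsto\lambda_\eps$ are strictly increasing.

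The step I expect to be the main obstacle is the exclusion of a first contact occurring at a common point $p_0\in\partial W$ of the two \emph{non-translated} surfaces (the cases $t_1^*=0$ or $t_2^*=0$ realized through a boundary contact are exactly what is \emph{not} covered by the interior maximum principle used above). I would rule this out by a Hopf-type argument at the free boundary, modeled on \cref{lemma:con_bordo}: at $p_0$ both $K_i$ meet $\partial W$ orthogonally and one lies weakly above the other with ordered mean curvatures, so a blow-up at $p_0$ produces two tangent stationary cones in a half-space with $\pi/2$ contact angle, one above the other, and deforming by a suitably chosen vector field as in the proof of \cref{lemma:con_bordo} forces them to coincide, after which the common-boundary rigidity of the previous step again contradicts $\lambda_1<\lambda_2$. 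The point where \cref{prop:C2alpha_graph} is indispensable is precisely here: the sectional curvatures of $K_i$ are bounded by $\omega$ \emph{uniformly in} $\delta$, whereas $\partial W$ has a principal curvature of order $\delta^{-1}$, so the wire bends away from $\Omega$ on a much finer scale than the one on which $K_i$ is essentially flat — and this is what makes the barrier in the Hopf argument fit, uniformly in $\delta$. I would finish by recording that the monotonicity of $\{D^\eps_+, u^\eps_+\}$ together with the orthogonality at $\partial W$ yields the claimed foliation.
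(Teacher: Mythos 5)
Your overall strategy (vertical sliding plus the maximum principle, with the uniform flatness/curvature estimates and the orthogonality at the wire controlling the boundary behaviour) is the same as the paper's, and the observation that an \emph{upward} translate $K_1+te_d$, $t>0$, detaches from $W$ is correct and does rule out any contact point lying on $\partial W$. However, there is a genuine gap in the curvature-ordering step. Ruling out contact on $\partial W$ does not make the first contact ``interior'': at $t=t_1^*>0$ the touching point can be a point $p=(x_0',u_1(x_0')+t_1^*)$ with $x_0'\in\partial D_1\cap \mathrm{int}\,D_2$, i.e.\ an \emph{edge} point of the translated copy of $K_1$ (lying in $\Omega$, strictly above the point where $K_1$ meets $\partial W$) which is an interior point of $K_2$. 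Nothing excludes that the minimum of $u_1+t_1^*-u_2$ over $D_1\cap D_2$ is attained only on $\partial D_1$: this is in fact the natural scenario when $D_1\subsetneq D_2$, which is exactly the configuration you are trying to prove. At such a point the two surfaces need not be tangent (you only get a one-sided inequality on the derivative in the direction entering $D_1$), so the comparison of mean curvatures fails; and your proposed Hopf/blow-up argument ``modeled on \cref{lemma:con_bordo}'' does not apply either, because $p\notin\partial W$, so there is no wedge between the free boundary and the fixed boundary to exploit. The same uncovered case reappears in your second sliding (contact over $\partial D_2$ at an interior point of $K_1$). The paper avoids precisely this by sliding the \emph{larger} graph \emph{downwards}: then the relative boundary in $\overline\Omega$ of the slid surface lies on $\partial\Omega$, so every contact is either a genuine two-sided interior tangency or a contact on $\partial\Omega$, and the latter is excluded by the quantitative Step 1 computation in the $2$-plane $J={\rm span}\{\nu_\Omega(P),e_d\}$ (the free boundaries are $C^{1,\alpha}$-flat with a small constant $C$, while the normal of $\partial W\cap J$ rotates at rate $1/\delta$, and orthogonality at both endpoints gives $C\,(Q-P)\cdot e_d\ge \delta^{-1}(Q-P)\cdot e_d$, a contradiction). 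To repair your argument you would either have to switch to the downward sliding and supply that boundary-exclusion estimate, or give a separate argument for the edge-contact configuration; as written, the key inequality $\lambda_1<\lambda_2$ is not established.

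Two smaller points. First, your use of \cref{t:symmetry} (to say the volume above $\bp$ is $\eps_i/2$) is circular in the paper's logical order, since symmetry is deduced \emph{from} this ordering theorem; the monotonicity should be run with the volumes of the regions below $K_+^{\eps_i}$ (as the paper implicitly does), and symmetry obtained afterwards. Second, in the case $t_1^*=0$ the implication ``$u_1\ge u_2$ on $D_1\cap D_2$ implies the region under $K_1$ contains the region under $K_2$'' needs the extra remark that then $D_2\subseteq D_1$; this does follow (a point $x'\in\partial D_1\cap\mathrm{int}\,D_2$ would force $u_2(x')\ge\sqrt{\delta^2-\dist(x',\Gamma)^2}=u_1(x')\ge u_2(x')$, hence $x'\in\partial D_2$), but it should be said. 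Your rigidity argument for strict inclusion of the domains (equal boundary curves plus orthogonality give equal tangent planes, then integrate the two equations over $D$ to contradict $\lambda_1<\lambda_2$) is a nice touch not present in the paper, but it sits downstream of the gapped step.
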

\begin{proof}
In the following, we drop the subscript $_+$ to make the notation easier. Let $(u_1,D_1, \eps_1, \lambda_1)$ and $(u_2, D_2, \eps_2, \lambda_2)$ be the two quadruples of functions $u_{1,2}:D_{1,2}\to\R_+$ parametrizing the upper boundaries $\partial S_+^{1,2}$ with mean curvatures $\lambda_{1,2}>0$ respectively, associated to the two different volumes $\eps_1$ and $\eps_2$ with $\eps_1 < \eps_2$. Consider the family of functions 
$$-t+u_2:D_2\to\R\ :\quad  t\ge 0.$$
The volume of the set 
$$E^2_t:=\left\{(x',x_d)\in\Omega\ :\ x'\in D_2,\ 0<x_d<u_2(x')-t\right\}$$
is decreasing in $t$, is equal to $0$ when $t$ is large, and is $\eps_2$ when $t=0$. Moreover, when $t\ge 0$
$$\overline\Omega\cap\{x_d>0\}\cap \partial E^2_t=\left\{(x',x_d)\in\overline\Omega\ :\ x'\in D_2,\ x_d=u_2(x')-t\right\}=\overline\Omega\cap\{x_d>0\}\cap(-te_d+\partial S_+^2).$$
Since $\eps_1<\eps_2$ there is $\overline t>0$ such that $\overline\Omega\cap\{x_d>0\}\cap(-\overline t e_d+\partial S_+^2)$ touches $\overline\Omega\cap\{x_d>0\}\cap\partial S_+^1$ from below at some point $P$.
\\
\\
{\it Step 1. We claim that $P \notin \partial\Omega$.} Let suppose by contradiction that there exists $P \in \pa \Omega$ such that $P \in \pa S_+^1\cap(-\overline t e_d+\partial S_+^2)$. Consider the plane 
$$J:={\rm span}\{\nu_\Omega(P),e_d\}.$$
Then:
\begin{itemize}
\item $\partial\Omega\cap J$ is a circle of radius $\delta$ with center $C'\in\Gamma\subset\bp$;
\item for any $x_0\in \partial \Omega \cap J$ the normal $\nu_{\Omega}(x_0)$ lies in $J$; precisely $\nu_{\Omega}(x_0)=x_0-C'$;
\item the sections $\Omega \cap \partial S_+^1\cap J$ and $\Omega\cap \partial S_+^2\cap J$ are smooth curves meeting $\partial\Omega$ orthogonally.
\end{itemize}
\begin{figure}[htbp]
		\centering
		\begin{tikzpicture}[rotate=0, scale=1.2]
\draw[thick, blue, name path=n] (-3.1,0.45) arc [start angle=150, end angle = 85,x radius = 60mm, y radius = 60mm];
\draw[thick, red, name path=n] (-3.4,1.20) arc [start angle=130, end angle = 74,x radius = 65mm, y radius = 65mm];
\draw[thick, red, densely dashed, name path=n] (-3.4,-2.10) arc [start angle=130, end angle = 74,x radius = 65mm, y radius = 65mm];
\draw[thick, densely dashed, red, name path=n] (-3.4,0.25) arc [start angle=130, end angle = 74,x radius = 65mm, y radius = 65mm];
\draw[dashed] (-3.4,-2.5) -- (-3.4,3.10);
\draw[very thick, color=black, name path=a] (-5,0) circle [radius=2];
\draw [color = black] node at (-6,-1) {$W\cap J$};
\draw [color = black] node at (-4.8,2.2) {$\partial\Omega\cap J$};
\draw [color = black] node at (-2.67,0.275) {$P$};
\draw [color = black] node at (-3.8,1.1) {$Q$};
\draw [color = red] node at (-2.6,2.5) {$T_2(Q)$};
\draw [color = blue] node at (-1.7,1.575) {$T_1(P)$};
\draw [color = red] node at (-1.2,0.675) {$T_2(P+\overline t e_d)$};
\draw [color = black] node at (1,3.6) {$\partial S_+^1$};
\draw [color = black] node at (1,2.9) {$\partial S_+^2$};
\draw [color = black] node at (1,2) {$-\overline te_d+\partial S_+^2$};
\draw [color = black] node at (1,-0.3) {$-te_d+\partial S_+^2$};
\draw [color = red, very thick, ->] (-3.1,0.48) -- (-2.17,1.10);
\draw [color = red, very thick, ->] (-3.4,1.20) -- (-2.5,2.00);
\draw [color = blue, very thick, ->] (-3.1,0.48) -- (-2.45,1.40);
\filldraw [black] (-3.07,0.475) circle (2pt);
\filldraw [black] (-3.4,1.20) circle (2pt);
\end{tikzpicture}
	\caption{Graphical representation of the procedure used in Step 1 of \cref{th:ordered_graphs} to show that normal graphs are ordered with respect to the volume $\eps$.}
	\label{fig:normal_graphs_ordered}
\end{figure}
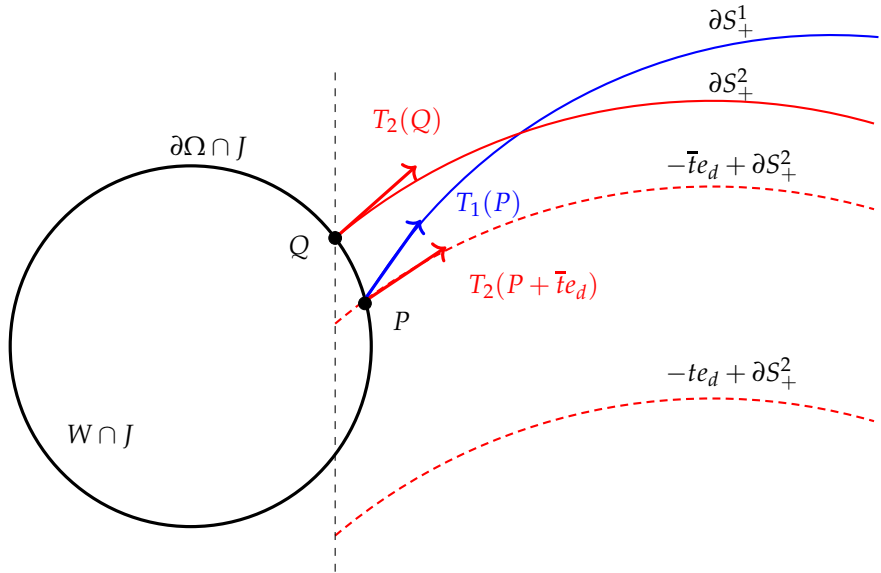
We define $Q$ as the end point of the curve $\Omega \cap \partial S_+^2\cap J$ lying on $\partial\Omega$. Moreover, let $T_2(Q)$ be the tangent vector to the curve $\Omega \cap \partial S_+^2\cap J$ at the end-point  $Q \in \pa \Omega$. Similarly, let $T_1(P)$ be the tangent vector to the curve $\Omega \cap \partial S_+^1\cap J$ at the end point $P\in\partial\Omega$.
Moreover, let $T_2(P+\overline t e_d)$ be the tangent vector to $\Omega \cap \partial S_+^2\cap J$ at the interior point $P+\overline t e_d\in\Omega$; notice that $T_2(P+\overline t e_d)$ is also the tangent vector to $\Omega \cap (-\overline te_d+\partial S_+^2)\cap J$ at the point $P\in\partial\Omega$.  
For a graphical representation we refer to \cref{fig:normal_graphs_ordered}. In particular, we have 
$$T_1(P)=-\nu_\Omega(P)\ ,\qquad T_2(Q)=-\nu_\Omega(Q)\qquad\text{and}\qquad\nu_\Omega(P) - \nu_\Omega(Q) = \frac{P-Q}{\delta} .$$
Thus, in the plane, $J$ a consequence of \cref{t:reg-C1alpha} is that there exists a positive small constant $C$ depending on the volume $\eps$ and on the flatness $\tau$ such that
\begin{equation}
    \label{e:flatness_controllo_normali}
    \abs{T_2(Q) - T_2(\overline t e_d+ P)} \leq C\abs{{\bf p}(Q) - {\bf p}(\overline t e_d+ P)}= C\abs{{\bf p}(Q) - {\bf p}(P)} \leq C \abs{P-Q},
\end{equation}
where ${\bf p}$ is the orthogonal projection on $\bp$. Since $(-\overline te_d+ \partial S_+^2)$ touches $\partial S_+^1$ from below, we have
$$
\begin{aligned}
    T_2(\overline t e_d+ P) \cdot e_d \leq T_1(P) \cdot e_d
    &= -\nu_\Omega(Q) \cdot e_d + ( T_1(P) + \nu_\Omega(Q)) \cdot e_d\\
        &= -\nu_\Omega(Q) \cdot e_d + ( -\nu_\Omega(P) + \nu_\Omega(Q)) \cdot e_d\\
        &= -\nu_\Omega(Q) \cdot e_d  + \frac{1}{\delta} (Q- P) \cdot e_d\\
        &= T_2(Q) \cdot e_d  + \frac{1}{\delta} (Q- P) \cdot e_d, 
\end{aligned}
$$
which, using \eqref{e:flatness_controllo_normali}, gives
$$
\begin{aligned}
   C (Q-P) \cdot e_d \geq \Big(-T_2(\overline t e_d+ P) + T_2(Q)\Big) \cdot e_d  &\geq\frac{1}{\delta} (Q- P) \cdot e_d,
\end{aligned}
$$
leading to a contradiction since $C$ is small and $\pa \Omega \cap J$ is close to be a vertical boundary.
\\
\\
{\it Step 2. We claim that $\lambda_1 < \lambda_2$.} As a consequence of Step 1, we have that $P\in\Omega$. Then, by the maximum principle $\lambda_2\ge \lambda_1$. Moreover, if $\lambda_2=\lambda_1$, then $\partial S_+^1$ and $-\overline t e_d+\partial S_+^2$ must coincide in $\Omega$, so there is a second contact point on $\partial\Omega$, which is impossible by Step 1. 
\\
\\
{\it Step 3. We claim that $u_1 <u_2$.} Consider the family of functions 
$$-t+u_1:D_1\to\R\ :\ t\ge 0,$$
and define the set 
$$E^1_t:=\left\{(x',x_d)\in\Omega\ :\ x'\in D_1,\ 0<x_d<u_1(x')-t\right\};$$
the volume of $E^1_t$ is decreasing in $t$, is equal to $0$ when $t$ is large, and is $\eps_1$ when $t=0$. Moreover, when $t\ge 0$
$$\overline\Omega\cap\{x_d>0\}\cap \partial E^1_t=\left\{(x',x_d)\in\overline\Omega\ :\ x'\in D_1,\ x_d=u_1(x')-t\right\}=\overline\Omega\cap\{x_d>0\}\cap(-te_d+\partial S_+^1).$$
Suppose by contradiction that there is $\widetilde t>0$ such that $\overline\Omega\cap\{x_d>0\}\cap(-\widetilde t e_d+\partial S_+^1)$ touches $\overline\Omega\cap\{x_d>0\}\cap\partial S_+^2$ from below at some point $\widetilde P$. By the same argument of Step 1, we know that $\widetilde P\notin \partial\Omega$. On the other hand, by Step 2 and the maximum principle, $\widetilde P\notin\Omega$, which concludes the proof.
\end{proof}

Finally, once we have shown that normal graphs are ordered with respect to the volume, we are in position to close the proof of \cref{th:sogno_finale} providing the symmetry with respect to the plane $\bp$.

\begin{theorem}
\label{t:symmetry}
    Let $\pa S_+$ and $\pa S_-$ be as in \cref{prop:C2alpha_graph}. Then $D_+=D_-$ and $u_+=-u_-$.
\end{theorem}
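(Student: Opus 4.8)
The plan is to combine the reflection symmetry of the whole configuration across $\bp$ with a sliding/maximum-principle argument modelled on the proof of \cref{th:ordered_graphs}. Let $\mathcal R(x',x_d):=(x',-x_d)$. Since $\Gamma\subset\bp$, the wire $W=I_\delta(\Gamma)$ is $\mathcal R$-invariant, hence so are $\Omega$ and $\widetilde\Omega$; moreover $\mathcal R$ is a homeomorphism of $\Omega$, so it induces an automorphism of $\pi_1(\Omega)$ preserving non-triviality of loops, and therefore the spanning class $\mathcal C$ is $\mathcal R$-invariant as well. Being an isometry, $\mathcal R$ preserves $\H^{d-1}$ and ${\rm Vol}$; consequently $\FF(\mathcal R K,\mathcal R E)=\FF(K,E)$ and $(\mathcal R K,\mathcal R E)$ is again a minimizer of $\FF$ in $\mathcal A(W,\mathcal C)$, with $|\mathcal R E|=|E|=\eps$ (by \cref{l:upper_bound_curvatura_eps}) and with $\partial^\ast(\mathcal R E)=\mathcal R(\partial^\ast E)$ of the same constant mean curvature $\lambda$ (by \cref{lemma:stime_curvature}). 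By \cref{prop:lunga!!!}, \cref{lemma:pernonesseretroppolunghi} and \cref{prop:C2alpha_graph}, the upper component of $(\mathcal R K,\mathcal R E)$ is $\mathcal R(S_-)$, i.e. the region above the $C^\infty$ graph of $\widehat u:=-u_-$ over $\widehat D:=D_-$, with $\widehat u>0$ on $\widehat D$; reflecting the Euler--Lagrange system of \cref{prop:C2alpha_graph} (and using that $\partial W$ is $\mathcal R$-symmetric) one checks that $\widehat u$ solves the \emph{same} problem as $u_+$: a constant-mean-curvature-$\lambda$ normal graph over a domain determined by attaching orthogonally to $\partial W$, i.e. $-{\rm div}\big(\nabla\widehat u/\sqrt{1+|\nabla\widehat u|^2}\big)=\lambda$ with $(-\nabla\widehat u,1)\cdot\nu_W=0$ on the free boundary and $\widehat u>0$. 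Thus \cref{t:symmetry} is equivalent to the uniqueness, at fixed $\lambda>0$, of the positive solution of this overdetermined free boundary problem, i.e. to $\mathrm{graph}(u_+)=\mathrm{graph}(\widehat u)$.

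To prove this uniqueness I would slide one graph against the other exactly as in \cref{th:ordered_graphs}. Translate $\mathrm{graph}(u_+)$ downward, $u_+\mapsto u_+-t$ with $t\ge0$: for $t$ large the translated surface lies in $\{x_d<0\}$, hence strictly below $\mathrm{graph}(\widehat u)\subset\{x_d>0\}$, and one decreases $t$ until the first $\overline t\ge0$ at which $\mathrm{graph}(u_+-\overline t)$ meets $\mathrm{graph}(\widehat u)$, at some contact point $P\in\overline{\widetilde\Omega}$. If $\overline t>0$, then $P\notin\partial\Omega$: this is excluded by part {\rm(iii)} of \cref{lemma:pernonesseretroppolunghi} together with the uniform curvature bound of \cref{prop:C2alpha_graph}, which makes the free boundary much flatter than $\partial W$ bends --- precisely the mechanism used in Step~1 of the proof of \cref{th:ordered_graphs}. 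Hence $P\in\widetilde\Omega$ is interior; since $\mathrm{graph}(u_+-\overline t)$ and $\mathrm{graph}(\widehat u)$ have the same constant mean curvature $\lambda$ and touch one-sidedly at $P$, Sch\"atzle's strong maximum principle (as in \cref{thm:lambda>0} and \cref{lemma:pernonesseretroppolunghi}) forces them to coincide throughout $\widetilde\Omega$. But then the attachment curves of $\mathrm{graph}(u_+)$ and of its translate $\mathrm{graph}(u_+-\overline t)=\mathrm{graph}(\widehat u)$ would be two curves on $\partial W$ differing by the vertical vector $\overline t\,e_d$; since $\phi(s,\cdot)$ is injective and $\partial W$ is a tube around the planar curve $\Gamma\subset\bp$, this forces either $\overline t=0$ or one of the two curves to lie in $\{x_d\le 0\}$, which $u_+>0$ and $\widehat u>0$ forbid. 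Therefore $\overline t=0$, so $\mathrm{graph}(u_+)$ lies weakly below $\mathrm{graph}(\widehat u)$; running the same argument with the roles of $u_+$ and $\widehat u$ interchanged --- legitimate, since $(K,E)$ and $(\mathcal R K,\mathcal R E)$ play symmetric roles --- gives the reverse inclusion, whence $\mathrm{graph}(u_+)=\mathrm{graph}(\widehat u)$, i.e. $D_+=D_-$ and $u_+=-u_-$.

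The step I expect to be the main obstacle is the volume and domain bookkeeping in the slide: one must make precise, as in Step~3 of the proof of \cref{th:ordered_graphs}, the meaning of ``$\mathrm{graph}(u_+-t)$ lies below $\mathrm{graph}(\widehat u)$'' when the base domains $D_+$ and $D_-$ may a priori differ, and one must use the monotonicity in $t$ of the relevant enclosed volumes --- together with ${\rm Vol}(E\cap\{x_d>0\})+{\rm Vol}(E\cap\{x_d<0\})=\eps$ --- to ensure that a first contact genuinely occurs and that the maximum principle is applied from the correct side. By contrast the boundary exclusion and the strong maximum principle steps are not new difficulties, as they reproduce arguments already in the paper (Step~1 of \cref{th:ordered_graphs}, \cref{thm:lambda>0}, \cref{lemma:pernonesseretroppolunghi}); the only feature genuinely new relative to \cref{th:ordered_graphs} is that here the two limiting mean curvatures are \emph{equal}, so that the dichotomy there collapses onto the ``coincidence'' alternative.
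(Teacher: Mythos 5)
Your plan is correct and is essentially the paper's own proof: the paper likewise treats $-u_-$ as a second constant-mean-curvature-$\lambda$ graph meeting $\partial W$ orthogonally and compares it with $u_+$ by a vertical sliding argument, excluding boundary contact exactly as in Step 1 of \cref{th:ordered_graphs} and invoking the strong maximum principle at an interior touching point with equal curvatures. The only organizational difference is that the paper handles the ordering/first-contact bookkeeping you flag by rerunning the argument of \cref{th:ordered_graphs} with the half-volumes $\eps_\pm:={\rm Vol}(E\cap\{\pm x_d>0\})$ (so that if $\eps_+\neq\eps_-$ a touching with $t>0$ must occur, and coincidence of the two graphs then gives a contradiction), while leaving the reflection invariance you spell out implicit.
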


\begin{proof}
Let $\lambda>0$ be the mean curvature of $\partial S_+$. 
In order to use the same notion as the one of the proof of \cref{th:ordered_graphs}, we set: 
$$(u_1,D_1, \eps_1, \lambda_1):=(u_+,D_+,\eps_+,\lambda)\qquad\text{and}\qquad(u_2, D_2, \eps_2, \lambda_2):=(-u_-,D_-,\eps_-,\lambda),$$ 
where 
$$\eps_+:={\rm Vol}(E\cap\{x_d>0\})\qquad\text{and}\qquad \eps_-:={\rm Vol}(E\cap\{x_d<0\}).$$
If $\eps_+=\eps_-$, then by \cref{th:ordered_graphs}, $u_1=u_2$. Suppose that $\eps_1\neq \eps_2$. Without loss of generality $\eps_2>\eps_1$, so by \cref{th:ordered_graphs}, $D_1\subset D_2$ and $u_1\le u_2$ on $D_1$. Similar as in \cref{th:ordered_graphs}, we consider the family of functions  
$$u_2^t:=-t+u_2:D_2\to\R\ :\ t\ge 0,$$
and sets 
$$E^2_t:=\left\{(x',x_d)\in\Omega\ :\ x'\in D_2,\ 0<x_d<u_2(x')-t\right\}.$$
Since $\eps_2>\eps_1$, there exists $t>0$ such that the graph of $u_2^{t}$ touches the graph of $u_1$ from below at some $P\in\overline\Omega$. Applying the same argument as in Step 1 of \cref{th:ordered_graphs}, $P\notin\pa \Omega$. Then $P\in\Omega$ and by the fact that the two curvatures are both equal to $\lambda$, the maximum principle immediately implies that $u_1=-t+u_2$, which concludes the proof. 
\end{proof}

\begin{appendices}
\section{Homotopic Plateau admits a unique solution}
\label{app:Plateau_unica_sol}

In this Appendix, we show the existence and the uniqueness of a minimizer for the Homotopic Plateau problem 
\begin{equation*}
\ell(W,\C):=
\inf\Big\{\H^{d-1}(S)\ :\ S\in\Sc(W,\C)\Big\}\,,
\end{equation*}
where 
\begin{equation*}
\Sc(W,\C):=\left\{ S\subset \Omega\, \colon \, S \hbox{ is relatively closed in } \Omega \hbox{ and } S \hbox{ is } \C-\hbox{spanning } W \right\}\,,
\end{equation*}
which is exactly the same as the one set in \eqref{e:hp-intro}. Precisely, we show the result $W:=I_\delta(\Gamma)$ the $\delta$-tubular neighbourhood on $\Gamma$ ($\delta >0$ sufficiently small), which is a smooth embedding of $(d-2)$-dimensional sphere $\mathbb{S}^{d-2}$ in the hyperplane $\bp$.

\medskip

First of all, concerning the existence and the uniqueness for the Homotopic Plateau problem on the curve $\Gamma$ the result is immediate, Indeed, we notice that if $\C \subset \mathscr{C}_\Gamma$ is such that Plateau's problem $\ell(\Gamma,\C)$ is finite, then necessarily none of the loops in $\C$ is homotopically trivial. Since $\pi_1(\R^d\setminus \Gamma) = \Z$, it is immediate that there exists a unique spanning class $\C_0 \subset \mathscr{C}_\Gamma$ such that $\ell(\Gamma,\C_0) < \infty$. Notice also that, evidently, there exists a unique minimizer of $\ell(\Gamma,\C_0)$ in this case: the disc $D:=B_1(0) \cap \bp$. 

Then, we observe that the above conclusions remain valid when $\Gamma$ is replaced by a tubular neighbourhood $W = I_\delta (\Gamma)$ when $\delta$ is suitably small. We show this fact in the following lemma.  

\begin{lemma}
\label{lemma:de_0}
 There exists $\de_0>0$ such that if $\de\in(0,\de_0]$, then there exists a unique spanning class $\C$ for $W=I_\de(\Gamma)$ such that $\ell(W,\C) < \infty$. The homotopic Plateau's problem $\ell(W,\C)$ admits a unique solution given by $S=B_{1-\de}(0)\cap\bp$.
\end{lemma}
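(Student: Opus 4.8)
The plan is to reduce the statement to the already-understood case of the curve $\Gamma$ by means of a tubular-neighbourhood deformation retract, and then to identify the minimiser by a projection (calibration) argument. Since $\delta>0$ is small enough that $\partial I_\delta(\Gamma)$ is a smooth embedded hypersurface, the closed solid tube $W=I_\delta(\Gamma)$ deformation retracts onto $\Gamma$ and, dually, $\R^d\setminus\Gamma$ deformation retracts onto $\Omega=\R^d\setminus W$ (push points radially outward through the open tube). Hence $\pi_1(\Omega)\cong\pi_1(\R^d\setminus\Gamma)\cong\Z$ (Alexander duality for the embedded $\mathbb S^{d-2}$), the generator being detected by the linking number with $\Gamma$; in particular free homotopy classes of loops in $\Omega$ are classified by this linking number. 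Exactly as for $\Gamma$, a spanning class $\C$ with $\ell(W,\C)<\infty$ cannot contain a null-homotopic loop: if it did, homotopy-closedness and the connectedness of $\Omega$ would force $\C$ to contain arbitrarily small circles around \emph{every} point of $\Omega$, whence any $\C$-spanning set would have a uniform positive lower $(d-1)$-density at each of its points, contradicting $\H^{d-1}(S)<\infty$ (this is the standard fact underlying the homotopic spanning formalism; see \cite{de2017direct,king2022plateau}). Therefore the spanning class for which the Plateau problem is finite is the one of hypothesis (C), namely the loops with non-zero linking number with $\Gamma$.

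Next I would check that $D_0=B_{1-\delta}(0)\cap\bp$ (and, for general planar $\Gamma$, the bounded connected component of $\bp\setminus W$) is admissible. The closed disc $\overline{B_1}\cap\bp$ is a Seifert surface for $\Gamma$, and $(\overline{B_1}\cap\bp)\setminus W=D_0$; thus for every $\gamma\in\C$, which lies in $\Omega$ and hence avoids $W$, one has $\gamma\cap(\overline{B_1}\cap\bp)\subseteq D_0$, while the algebraic intersection number of $\gamma$ with this Seifert surface equals its linking number with $\Gamma$, hence is non-zero; so $\gamma\cap D_0\neq\emptyset$. Since moreover $D_0$ is relatively closed in $\Omega$ (its relative boundary $\partial B_{1-\delta}\cap\bp$ lies on $\partial W$), we get $D_0\in\Sc(W,\C)$ and $\ell(W,\C)\le\H^{d-1}(D_0)<\infty$.

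For minimality and uniqueness, let $S\in\Sc(W,\C)$ with $\H^{d-1}(S)\le\H^{d-1}(D_0)$ and let $\p:\R^d\to\bp$ be the orthogonal projection. The key step is to prove $\p(S)\supseteq D_0$: for $x'\in D_0$ the vertical line $L_{x'}=\{x'\}\times\R$ avoids $W$ (because $\mathrm{dist}(L_{x'},\Gamma)=\mathrm{dist}_{\R^{d-1}}(x',\Gamma)>\delta$), and if it avoided $S$ as well then, using $\H^{d-1}(S)<\infty$ and the coarea inequality so that $\H^{d-2}(S\cap\partial B_R(0))<\infty$ for a.e.\ $R$, one could for such a large $R$ route a closing arc through the complement of $S$ in $\partial B_R(0)$ (connected, since $d-1\ge 2$, and far from $W$) and build a smooth embedded loop $\gamma\subset\Omega\setminus S$ of linking number $\pm1$, contradicting that $S$ is $\C$-spanning. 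As $\p$ is $1$-Lipschitz, $\H^{d-1}(S)\ge\H^{d-1}(\p(S))\ge\H^{d-1}(D_0)$, so equality holds throughout; the equality case of the area formula then forces $\p|_S$ to be injective and $\H^{d-1}$-measure preserving at $\H^{d-1}$-a.e.\ point, whence the approximate tangent plane of $S$ is horizontal a.e.\ and $\p(S)=D_0$ up to $\H^{d-1}$-null sets. Since $S$ is relatively closed in $\Omega$ and $D_0$ is connected, $S$ lies in a single horizontal hyperplane $\{x_d=c\}$ and contains $D_0+c\,e_d$; and $c=0$, because a horizontal disc at height $c>0$ (resp.\ $c<0$) is missed by a loop that links $\Gamma$ once while remaining in $\{x_d<c\}$ (resp.\ $\{x_d>c\}$) — e.g.\ one linking $\Gamma$ from below (resp.\ above) — so it is not $\C$-spanning. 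Hence $S=D_0$ up to an $\H^{d-1}$-negligible relatively closed set, which is superfluous and therefore absent from a genuine (Almgren-minimal) minimiser, giving both the minimality and the uniqueness of $D_0=B_{1-\delta}(0)\cap\bp$.

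The two points I expect to be delicate are: (i) the assertion that a null-homotopic loop in $\C$ forces $\ell(W,\C)=+\infty$, which is precisely where the homotopic spanning machinery of \cite{de2017direct,king2022plateau} is genuinely needed and which I would quote rather than reprove; and (ii) the ``closing-up at infinity'' step in the proof of $\p(S)\supseteq D_0$, which requires care because $S$, although relatively closed in $\Omega$ and of finite $\H^{d-1}$-measure, could a priori be unbounded, so the closing arc must be placed on a sphere meeting $S$ in a set of finite $\H^{d-2}$-measure and routed through the connected complement. Granting these, the core of the argument — a calibration of $D_0$ by the vertical projection — is elementary, and the rigidity analysis in the equality case is routine.
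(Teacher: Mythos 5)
Your proposal follows the same route as the paper's proof: uniqueness of the spanning class from $\pi_1(\Omega)\cong\Z$, the inclusion $\mathbf{p}(S)\supseteq D_0$ via the spanning condition, the chain of inequalities coming from the $1$-Lipschitz projection, and the rigidity of the equality case showing that $S$ is (up to a negligible set) a vertical translate $D_0+c\,e_d$. However, the step where you pin down $c=0$ is justified incorrectly in exactly the range that matters. You claim that for $c>0$ there is a loop of linking number one with $\Gamma$ contained in $\{x_d<c\}$; for $0<c<\delta$ no such loop exists. Indeed, the surface obtained by taking the lateral cylinder $\Gamma+[0,\delta]e_d$ (which lies in $W=I_\delta(\Gamma)$, since each of its points is at distance at most $\delta$ from $\Gamma$) and capping it with the translated disc $(\overline{B_1}\cap\bp)+\delta e_d\subset\{x_d\geq c\}$ is a Seifert surface for $\Gamma$ contained in $W\cup\{x_d\geq c\}$; any loop in $\{x_d<c\}\setminus W$ is disjoint from it and therefore has zero linking number with $\Gamma$ (equivalently, $H_1(\{x_d<c\}\setminus W)=0$ for $0<c<\delta$: the tube still blocks any passage over itself inside that half-space). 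Since the rigidity argument only gives $S=D_0+c\,e_d$ for \emph{some} $c\in\R$, small nonzero values of $c$ must be excluded, and your loop does not do it.

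The conclusion is nevertheless true and the repair is easy: for $0<c<\delta$ the boundary sphere of $D_0+c\,e_d$ lies at distance $\sqrt{\delta^2+c^2}>\delta$ from $\Gamma$, so at height $c$ the annular region $\{x_d=c,\ 1-\delta<|x'|<1-\sqrt{\delta^2-c^2}\}$ is contained in $\Omega$ and disjoint from the translated disc. A loop that descends through the hole of the tube passing height $c$ inside this gap, crosses $\{x_d=0\}$ at radius $<1-\delta$, exits below $W$, returns around the outside and closes up above the tube, links $\Gamma$ once and misses $D_0+c\,e_d$; for $|c|\geq\delta$ your original loop works. With this substitution the uniqueness argument closes, and matches the paper's (which is terse at the very same point). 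Two smaller cautions: a relatively closed subset of $\partial B_R$ of finite $\H^{d-2}$ measure can disconnect the sphere, so your ``closing up at infinity'' step needs either radii $R_k$ along which $\H^{d-2}(S\cap\partial B_{R_k})\to0$ together with a quantitative separation bound, or a preliminary reduction to bounded competitors as the paper assumes ($S'\subset B_2$); your appeal to Almgren minimality (density lower bounds) to discard a possible $\H^{d-1}$-null remainder is, on the other hand, the right ingredient.
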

\begin{proof}
The first part follows from the fact that, for $\delta$ sufficiently small, $\pi_1(\R^d \setminus W) = \Z$. For the second part, let $S' \in \mathcal{S}(W,\C)$ be a minimizer for $\ell$, and notice that we can assume that $S' \subset B_2$. Letting ${\bf p}$ denote the orthogonal projection $\R^d \to \bp$, we claim that necessarily
\begin{equation}
\label{eq:projection}
{\bf p}(S') \supset S\,.   
\end{equation}
Indeed, for every $z \in S$ the line ${\bf p}^{-1}(\{z\})$ must intersect $S'$, for otherwise any loop $\gamma \in \C$ which agrees with the line ${\bf p}^{-1}(\{z\})$ in $B_2$ would have $\gamma \cap S' = \emptyset$, which contradicts the hypotheses on $S'$. The inclusion in \eqref{eq:projection} implies immediately that
\[
\H^{d-1}(S) \leq \H^{d-1}({\bf p}(S')) \leq \H^{d-1}(S') \leq \H^{d-1}(S)\,.
\]
In particular, the above inequalities are all equalities. Hence, the coarea formula implies that, for $\mathcal H^{d-1}$-a.e. $z \in {\bf p}(S')$, ${\bf p}^{-1}(\{z\}) \cap S'$ is a singleton, and that for $\mathcal H^{d-1}$-a.e. $p \in S'$ the approximate tangent plane ${\rm Tan}(S',p)$ is $\bp$. It follows that $S'$ contains a vertical translate of $S$; since $S'$ is $\C$-spanning and $\H^{d-1}(S') = \H^{d-1}(S)$, concluding that $S'=S$.
\end{proof}

\end{appendices}

\medskip

\section*{Acknowledgements}
We warmly thank Francesco Maggi for his interest in our work on the {\it non-collapsing conjecture}. 

\medskip

\noindent GB and BV are supported by the European Research Council (ERC), under the European Union's Horizon 2020 research and innovation program, through the project ERC VAREG - {\em Variational approach to the regularity of the free boundaries} (grant agreement No.\,853404). GB and BV acknowledge the MIUR Excellence Department Project awarded to the Department of Mathematics, University of Pisa, CUP I57G22000700001. GB and SS are supported by Gruppo Nazionale per l'Analisi Matematica, la Probabilit\`a e le loro Applicazioni (GNAMPA) of Istituto Nazionale di Alta Matematica (INdAM) though the INdAM-GNAMPA project 2024 CUP E53C23001670001. SS acknowledges the MIUR Excellence Department Project awarded to the Department of Mathematics, University of Milan, CUP G43C22004580005 and the support from the project PRIN 2022PJ9EFL "\textit{Geometric Measure Theory: Structure of Singular Measures, Regularity Theory and Applications in the Calculus of Variations}", funded by the European Union under NextGenerationEU and by the Italian Ministry of University and Research. BV acknowledges support from the projects PRA 2022 14 GeoDom (PRA 2022 - Università di Pisa) and MUR-PRIN ``NO3'' (No. 2022R537CS).  

\printbibliography

\end{document}